\theoremstyle{plain}
\newtheorem{Theo}{Theorem}[section] 
\newtheorem{pro}[Theo]{Proposition}        
\newtheorem{lem}[Theo]{Lemma}            
\newtheorem{cor}[Theo]{Corollary}
\theoremstyle{definition}
\newtheorem{defi}[Theo]{Definition}
\newtheorem{exem}[Theo]{Example}
\newtheorem{nota}[Theo]{Notation}
\newtheorem{hyp}[Theo]{Hypothesis}
\theoremstyle{remark}
\newtheorem{rem}[Theo]{Remark}
\def\ogg~{{\rm \og}}   
\def\nn{\noindent}
\def\q{\nn}
\def\qq{\nn\quad}
\def\qqq{\nn\quad\quad}
\def\emptyset{\varnothing}
\def\NN{{\mathbb N}}    
\def\ZZ{{\mathbb Z}}     
\def\RR{{\mathbb R}}    
\def\QQ{{\mathbb Q}}    
\def\CC{{\mathbb C}}    
\def\AA{{\mathbb A}}    
\def\PP{{\mathbb P}}
   \def\cM{{\mathcal M}}      \def\cT{{\mathcal T}}    \def\cO{{\mathcal O}}              \def\cL{{\mathcal L}}     
 \def\mfp{{\mathfrak p}}                           
\newcommand\ct{\operatorname{cotan}}         
\newcommand{\Ker}{\operatorname{Ker}}
\newcommand{\disf}[2]{D^+(#1,#2)}
\newcommand{\diso}[2]{D^-(#1,#2)}
\newcommand{\fdisf}[2]{\cO(D^+(#1,#2))}
\newcommand{\h}[1]{\mathscr{H} (#1)}
\newcommand{\A}[1]{\AA^{1,an}_{#1}}
\newcommand{\Po}[1]{\PP^{1,an}_{#1}}
\newcommand{\Ct}[1]{\hat{\otimes}_{#1}}
\newcommand{\LL}[2]{\cL_{#1}(#2)}
\newcommand{\Lk}[1]{\LL{k}{#1}}
\newcommand{\nsp}[1]{\rVert #1\rVert_{Sp}}
\newcommand{\Nsp}[2]{\rVert #2\rVert_{Sp,#1}}
\newcommand{\piro}[2]{\pi_{#1/#2}}
\newcommand{\pik}[1]{\piro{#1}{k}}
\newcommand{\nor}[1]{\rVert #1\rVert}
\newcommand{\discf}[3]{D^+_{#1} (#2,#3)}
\newcommand{\disco}[3]{D^-_{#1} (#2,#3)}
\newcommand{\fdiscf}[3]{\cO(D^+_{#1} (#2,#3))}
\newcommand{\couf}[3]{C^+ (#1,#2,#3)}
\newcommand{\couo}[3]{C^- (#1,#2,#3)}
\newcommand{\fcouf}[3]{\cO(C^+ (#1,#2,#3))}
\newcommand{\Couf}[4]{C^+_{#1} (#2,#3,#4)}
\newcommand{\Couo}[4]{C^-_{#1} (#2,#3,#4)}
\newcommand{\Pro}[2]{\pi_{#2/#1}}
\newcommand{\Uni}[2]{\sigma_{#2/#1}}
\newcommand{\uni}[1]{\Uni{k}{#1}}
\newcommand{\Fonction}[4]{\begin{array}[c]{rcl} 
                            #1&\longrightarrow&#2\\ #3&\mapsto&
                                                                #4\\ \end{array}}
\newcommand{\fra}[1]{\frac{1}{#1}}
\renewcommand\phi{\varphi}
\def\ct{\hat{\otimes}}
\def \ito{\hookrightarrow}
\def \ot{\otimes}
\def \<{\langle}
\def \>{\rangle}
\def\|{\rVert}
\def\*{\blacklozenge}
\def\b{\mathbf}
\def\dT{\frac{d}{dT}}
\def\d{\frac{d}{dS}}
\def \R+{\RR_{+}}
\def\Ak{\A{k}}
\def \Ac{\A{\CC}}
\def \Pk{\Po{k}}
\def \-1{^{-1}}
\def \Hx{\mathscr{H}(x)}
\def \rk{\tilde{k}}
\def \crk{\text{char}(\rk)}
\def\Bigcup{\bigcup\limits}
\def\Lim{\lim\limits}
\def\Sup{\sup\limits}
\def\Min{\min\limits}
\def\(({(\!(}
\def\)){)\!)}
\def\MM{\mathscr{M}}
\title{Spectrum of a linear differential equation with constant coefficients}
\date {\today}
\author{Tinhinane A. \textsc{AZZOUZ}}
\begin{document}

\maketitle

\begin{abstract}
In this paper we compute the spectrum, in the sense of Berkovich, of an
ultrametric linear differential equation with constant coefficients, defined over an affinoid
domain of the analytic affine line $\Ak$. We show that it is a finite
union of either closed disks or  topological closures of open
disks and that it satisfies a continuity property.
  
\end{abstract}


\section{Introduction}

Differential equations constitute an important tool for the
investigation of algebraic and analytic varieties, over the
complex and the $p$-adic numbers. 
Notably, de Rham cohomology is one of the most powerful way to obtain
algebraic and analytic informations. Besides, ultrametric phenomena appeared naturally studying formal
Taylor solutions of the equation around singular and regular
points.    

As soon as the theory of ultrametric differential equations became a
central topic of investigation  around 1960, after the work of
B. Dwork, P. Robba (et al.), the following
intersting phenomena appeared. 
In the ultrametric setting, the solutions of a linear differential
equation may fail to converge as expected, even if the coefficients of
the equation are entire functions. For example, over the ground field
$\QQ_p$ of $p$-adic numbers, the exponential power series
$\exp(T)=\sum_{n\geq 0}\frac{T^n}{n!}$ which is  solution of the
equation $y'=y$ has radius of convergence equal to $|p|^{\fra{(p-1)}}$
even though the equation shows no singularities. However, the behaviour of the radius of convergence is
well controlled, and 
its knowledge permits to obtain several informations about the
equation. Namely it controls the \emph{finite dimensionality} of the de Rham cohomology. For more details, we refer the reader to the recent work of
J. Poineau and A. Pulita \cite{and}, \cite{np2}, \cite{np3} and
\cite{np4}.

The starting point of this paper is an interesting relation between
this radius and the notion of \emph{spectrum} (in the
sense of Berkovich). Before discussing more in detail our results,
we shall quickly explain this relation. 

Consider a ring $A$ together with a derivation
$d:A\to A$. A \emph{differential module} on $(A,d)$ is a finite free
$A$-module $M$ together with a $\ZZ$-linear map
$$\nabla: M\to M$$
satisfying for all $m\in M$ and all $f\in A$ the relation
$\nabla(fm)=d(f)m+f\nabla(m)$. If an isomorphism $M\cong A^\nu$ is
given, then $\nabla$ coincides with an operator of the form
$$d + G : A^\nu\to A^\nu$$
where $d$ acts on $A^\nu$ component by component and  $G$ is a square matrix with coefficients in $A$.

If $A$ is moreover a Banach algebra with respect to a given norm
$\nor{.}$ and  $A^\nu$ is endowed with the max norm, then we can endow the
algebra of continuous $\ZZ$-linear endomorphisms with the operator norm, that
we still denote $\nor{.}$. The
spectral norm of $\nabla$ is given by $$
\nsp{\nabla}=\Lim_{n\to \infty}\nor{\nabla^n}^{\fra{n}}.$$

The link between the radius of convergence of $\nabla$ and its
spectrum is then the following:  on the one hand, when $\nor{.}$ is a
Berkovich point of type other than $1$ of the affine line and $A$ is
the field of this point, the spectral norm
$\nsp{\nabla}$ coincides
with the inverse of the radius of convergence of $M$ at $\nor{.}$ multiplied by some
constant (cf. \cite[p. 676]{dwork} or \cite[Definition 9.4.4]{Ked}).
On the other hand the spectral norm $\nsp{\nabla}$ is also equal to the radius of the smallest disk centred at zero and containing
the spectrum of $\nabla$ in the sense of Berkovich (cf. \cite[Theorem
7.1.2]{Ber}). We refer to the introduction of
Section~\ref{sec:spectr-assoc-diff-4} for a more precise explanation.

The spectrum appears then as a new invariant of the connection
$\nabla$ generalizing and refining the radius of convergence. This
have been our  first motivation, however the study of the spectrum of
an operator has its own interest and it deserves its own independent theory. 

In this
paper we focus 
on the computation of the spectrum of $\nabla$
in the case where $\nabla$ has constant
coefficients, which means that there is a basis of $M$ in which the
matrix $G$ has coefficients in the base field of constants (i.e. the field of elements killed by the derivations).
We show that when we change the domain of definition of
the equation  the spectrum has a uniform behaviour: it is
a finite union of either  closed disks or
topological closures (in the sense of Berkovich) of open disks.

Let $k$ be an arbitrary field and $E$ be a $k$-algebra with
unit. Recall that classically, the spectrum $\Sigma_f(E)$ of an
element $f$ of $E$ (cf. \cite[\S 1.2. Defintion 1]{bousp}) is the set
of elements $\lambda$ of $k$ such that $f-\lambda.1_E$ is not
invertible in $E$. In the case where $k=\CC$ and $(E,\nor{.})$ is a
$\CC$-Banach algebra, the spectrum of $f$ satisfies the following
properties:
\begin{itemize}
\item It is not empty and compact.
\item The smallest disk centred at 0 and containing $\Sigma_f(E)$ has
  radius equal to $\nsp{f}=\Lim_{n\to+\infty}\nor{f^n}^{\fra{n}}$.
\item The resolvent function $R_f:\CC\setminus\Sigma_f\to E$, $\lambda\mapsto
  (f-\lambda.1_E)\-1$ is an analytic function with values in $E$.
\end{itemize}
Unfortunately, this may fail in the ultrametric case. In \cite{Vi85} M. Vishik provides an
example of operator with empty spectrum and with a resolvent which
is only locally analytic. We  illustrate this pathology with an
example in
our context, where connections with empty classical spectrum abound.

\begin{exem}
  Consider the field $k=\CC$ endowed with the trivial
absolute value. Let $A:=\CC\((S\))$ be the field of Laurent power
series
endowed with the $S$-adic absolute value given by $|\sum_{n\geq
  n_0}a_nS^{n}|=r^{n_0}$, if $a_{n_0}\ne 0$, where  $|S|=r<1$ is a
nonzero real number\footnote{In the language of Berkovich, this field can be
  naturally identified with the complete
  residual field of the point $x_{0,r}\in \Ak$ (cf. section
  \ref{sec:berkovich-line-2}).}. We consider a rank one \emph{irregular} differential
module over $A=\CC\((S\))$ defined by the operator $\d+g:A\to A$,
where $g\in A$ has $S$-adic valuation $n_0$ that is less than or equal to $-2$.
We consider the connection $\nabla=\d+g$ as an element of
the $\CC$-Banach algebra $E=\LL{\CC}{\CC\((S\))}$ of bounded $\CC$-linear
maps of $\CC\((S\))$ with respect to the usual operator norm:
$\nor{\phi}=\Sup_{f\in A\setminus\{0\}}\frac{\nor{\phi(f)}}{\nor{f}}$, for all
$\varphi\in E$.
Then, the classical spectrum of the differentiel operator $\d+g$ is empty. Indeed,
since $\nor{\d}=\fra{r}$ and
$|(g-a)\-1|\leq r^2$ for all $a\in \CC$, then $\nor{(g-a)^{-1}\circ
  (\d)}\leq r$ (resp. $\nor{(\d)\circ(g-a)\-1}\leq r$) and the series
$\sum_{n\geq 0}(-1)^n\cdot((g-a)^{-1}\circ(\d))^n$ (resp. $\sum_{n\geq
  0}(-1)^n((\d)\circ (g-a)\-1)^n$) converges in
$\Lk{\CC\((S\))}$. Hence, for all $a\in \CC$, $(\sum_{n\geq
  0}(-1)^n\cdot((g-a)^{-1}\circ(\d))^n)\circ (g-a)^{-1}$
(resp. $(g-a)\-1\circ(\sum_{n\geq 0}(-1)^n\cdot((\d)\circ(g-a)\-1)^n$) is a left
(resp. right) inverse of $\d+(g-a)$
in $\Lk{\CC\((S\))}$ and $a$
does not belong to the spectrum.\footnote{Notice that it is
  relatively easy to show that any non trivial rank one  connection
  over $\CC\((S\))$ is set theoretically bijective. This follows
  from the classical index theorem of B. Malgrange \cite{Malgrange-Irreg}. However,
  the set theoretical inverse of the connection may not be
  automatically bounded. This is due to the fact that the base field
  $\CC$ is trivially valued and the Banach open mapping theorem does
  not hold in general. However, it is possible to prove that any such
  set theoretical inverse is bounded (cf. \cite{for}).}  
\end{exem}
To deal with this issue V. Berkovich understood that it was better not to define the spectrum as a subset of the base field $k$, but as a subset of the analytic line $\Ak$, which is a bigger space\footnote{This can be motivated by the fact that the resolvent is
  an analytic function on the complement of the spectrum.}. His theory of analytic
spaces (cf. \cite{Ber}, \cite{ber2}) enjoys several good local topological properties such as compactness, arc connectedness ...
In this setting
Berkovich developed a spectral theory for ultrametric operators  \cite[Chapter
7]{Ber}. The definition of the spectrum given by Berkovich is the
following: Let $(k,|.|)$ be complete field with respect to an
ultrametric absolute value and let
 $\Ak$ be the  Berkovich affine line. For a point $x\in\Ak$ we denote
 by $\Hx$ the associated complete residual field. We fix on $\Ak$  a coordinate function $T$. The spectrum $\Sigma_f(E)$ of an element $f$ of a $k$-Banach algebra $E$ is  the set
of points $x\in\Ak$, such that $f\ot 1-1\ot T(x)$ is not invertible in
$E\ct_k \Hx$. It can be proved (cf. \cite[Proposition 7.1.6]{Ber}) that this is equivalent to say that
there exists a complete valued field  $\Omega$ containing
isometrically $k$ and a constant $c\in \Omega$ such that
\begin{itemize}
\item the image of $c$ by the canonical projection $\A{\Omega}\to \Ak$
  is $x$;
  \item $f\ot 1-1\ot c$ is not invertible as an element of $E\ct_k\Omega$. 
  \end{itemize}
In some sense $\Omega=\Hx$ and $c=T(x)$ are the minimal possible
choices. This spectrum is compact, non empty and it satisfies the properties listed above
(cf. \cite[Theorem 7.12]{Ber}). Notice that if $f=\nabla$ is a
connection and if $E=\Lk{M}$ is the $k$-Banach algebra of continuous $k$-linear endomorphisms of
$M$,
then $f\ot 1-1\ot c$ is no more a connection as an element of
$E\ct_k\Omega$. Indeed, $\Lk{M}\ct_k\Omega$ does not coincide with
$\LL{\Omega}{M\ct_k \Omega}$, unless $\Omega$ is a finite
extension. In particular, {\em no index theorem can be used} to test the
set theoretical bijectivity of $\nabla\ot 1-1\ot c$. 

Coming back to the above example, using this definition it can be
proved that the spectrum of $\d+g$ is now reduced
 to the individual \emph{non-rational} point
 $x_{0,r^{n_0}}\in\Ac$(cf. \cite{for}). The dependence on $r$ shows that the Berkovich spectrum \emph{depends on the chosen absolute value on $k$}; whereas, instead, the classical spectrum is a completely algebraic notion.

Now, let $(k,|.|)$ be an ultrametric complete field. Let $X$ be an affinoid domain of $\Ak$ and $x$ a point of type
(2), (3) or (4). To avoid confusion we fix another coordinate function $S$ over
$X$. We set $A=\cO(X)$ or $\Hx$ and $d=g(S)\d$ with $g(S)\in A$. We will
distinguish the notion of differential equation in the sense of a
differential polynomial $P(d)$ acting on $A$ from the notion of
differential module $(M,\nabla)$ over $(A,d)$ associated to $P(d)$ in
a cyclic basis
(cf. section \ref{sec:spectr-assoc-diff-4}). Indeed, it is not hard to
prove 
that
the spectrum of $ P(\d)=(\d)^n+a_{n-1}(\d)^{n-1} +\cdots+a_0$ as an
element of $\Lk{A}$, where
$a_i\in k$, is given by the easy formula (cf. \cite[p. 2]{bousp} and Lemma~\ref{sec:berk-spectr-theory})
\begin{equation}
  \Sigma_{P(\d)}(\Lk{A})=P(\Sigma_{\d}(\Lk{A})).
\end{equation}
This set is always either a closed
disk or the topological closure of an open disk
(cf. Lemma~\ref{sec:berk-spectr-theory},
Remark~\ref{sec:spectr-line-diff}).
On the other hand, surprisingly enough, this differs form the
spectrum $\Sigma_\nabla(\Lk{M})$ of the differential module $(M,\nabla)$ associated to $P(\d)$ in a cyclic basis (i.e the
spectrum of $\nabla$ as an element of $\Lk{M}$). Indeed, this last is
a finite union of either closed disks or topological closures of open
disks
(cf. Theorem~\ref{sec:introduction}) centered on the roots of the (commutative)
polynomial $Q=X^n+a_{n-1}X^{n-1}+\cdots+a_0\in k[X]$ associated to
$P$.\footnote{Notice that $Q$ is the Fourier transform of $P$ and that its
  roots are related to the eigenvalues of the matrix $G$ of the
  connection in the companion form (cf. Remark \ref{sec:spectr-line-diff-1}).}
In order to introduce our next result, we denote by $\tilde{k}$ the
residual field of $k$ (cf. Section \ref{sec:defin-notat}) and we set:
\begin{equation}
  \label{eq:7}
  \omega= \begin{cases}
	|p|^{\frac{1}{p-1}} &\text{if char}(\tilde{k})=p \\ 1 &
          \text{if char}(\tilde{k})=0
	\end{cases}.
\end{equation}
The main statement of this paper
is then the following: 
\begin{Theo}\label{sec:introduction}
  We suppose that $k$ is algebraically closed. Let $X$ be a connected affinoid domain of $\Ak$ and $x\in\Ak$ a
  point of type (2), (3) or (4). We set $A=\cO(X)$ or $\Hx$. Let
  $(M,\nabla)$ be a differential module over $(A,\d)$ such that there
  exists a basis for which the associated matrix $G$ has constant
  entries (i.e. $G\in\cM_n(k)$). Let $\{a_1,\dots, a_N\}\subset k$  be the set of  eigenvalues of
$G$. Then the behaviour of the spectrum $\Sigma_\nabla(\Lk{M})$ of $\nabla$ as an element of
$\Lk{M}$ is summarized  in the following table:

\begin{center}
\resizebox{16cm}{!}{
  \begin{tabular}{|c|c|c|c|c|}
    \hline
    $A=$ & \multicolumn{2}{c|}{ $\cO(X)$} &\multicolumn{2}{c|}{
                                            $\Hx$}\\
    \cline{2-5}
              &$X=\disf{c}{r}$ & $X=\disf{c_0}{r_0}\setminus
                  \Bigcup_{i=1}^\mu\diso{c_i}{r_i}$& $x$ of type (2) or
                                                     (3)& $x$ of type (4)\\
    \hline
    & & & \multicolumn{2}{c|}{ }\\
    $\crk=p>0$ & $\Bigcup_{i=1}^N\disf{a_i}{\frac{\omega}{r}}$ &
                                                $\Bigcup_{i=1}^N\disf{a_i}{\frac{\omega}{\min\limits_{j}
                                                r_j}}$ &
                                                         \multicolumn{2}{c|}{$\Bigcup_{i=1}^N\disf{a_i}{\frac{\omega}{r(x)}}$ } \\
    \hline
    & & & & \\
    $\crk=0$ & $\Bigcup_{i=1}^N\overline{\diso{a_i}{\frac{1}{r}}}$& $\Bigcup_{i=1}^N\disf{a_i}{\frac{1}{\min\limits_{j}
                                                r_j}}$&
                                                        $\Bigcup_{i=1}^N\disf{a_i}{\frac{1}{r(x)}}$
                               &
                                 $\Bigcup_{i=1}^N\overline{\diso{a_i}{\frac{1}{r(x)}}}$\\
    \hline
  \end{tabular}
}
\end{center}

\end{Theo}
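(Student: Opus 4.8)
The plan is to reduce the general constant-matrix connection to the rank-one operator $\d + a$, whose spectrum is computed in the previous sections, and then reassemble. Since $k$ is algebraically closed, $G$ is conjugate over $k$ to its Jordan normal form: there is $P\in\GL_n(k)$ with $P^{-1}GP = \bigoplus_i J_i$, each $J_i = a_i\Id + N_i$ a Jordan block with eigenvalue $a_i$ and nilpotent part $N_i$. As a constant matrix, $P$ commutes with $\d$ (which acts componentwise), so $P^{-1}\nabla P = \d + \bigoplus_i J_i$; moreover $P$ is a unit of the Banach algebra $\Lk{M}$. Because the Berkovich spectrum is invariant under conjugation by a unit — one has $P^{-1}(\nabla\ot 1 - 1\ot c)P = (P^{-1}\nabla P)\ot 1 - 1\ot c$, and invertibility in $\Lk{M}\ct_k\Hx$ is preserved by conjugation — we may replace $\nabla$ by $\d + \bigoplus_i J_i$. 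The block decomposition furnishes an orthogonal family of idempotents $\pi_i\in\Lk{M}$ summing to $1$ and commuting with $\d+\bigoplus_i J_i$ (each block subspace is both $\d$- and $J_i$-stable); an element commuting with such a complete system of idempotents is invertible precisely when each of its diagonal blocks $\pi_i(\,\cdot\,)\pi_i$ is. Hence $\Sigma_\nabla(\Lk{M}) = \bigcup_i \Sigma_{\d + J_i}(\Lk{M_i})$, and it suffices to treat a single Jordan block.

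Fix a block and write $\nabla_i = (\d + a_i)\Id + N_i$ on $M_i\cong A^{m_i}$, with $N_i$ strictly triangular. For $c$ in any complete extension $\Omega$ of $k$, set $D = (\d+a_i-c)\ot 1\in\Lk{A}\ct_k\Omega$, so that $\nabla_i\ot 1 - 1\ot c = D\cdot\Id + N_i$. If $D$ is invertible, then $D^{-1}N_i$ is a strictly triangular matrix with constant entries, hence nilpotent, and the \emph{finite} sum $\sum_{j=0}^{m_i-1}(-1)^j(D^{-1}N_i)^j$ inverts $\Id + D^{-1}N_i$; thus $\nabla_i\ot 1 - 1\ot c$ is invertible. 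No convergence is needed, so the discrepancy $\Lk{A}\ct_k\Omega\neq\LL{\Omega}{A\ct_k\Omega}$ plays no role here. Conversely, a triangular matrix over a unital ring is a unit only when its diagonal entries are units (the strictly triangular matrices form a nilpotent ideal of the triangular subalgebra, along which units lift, and reading off a diagonal entry is a ring homomorphism), so invertibility of $D\cdot\Id + N_i$ forces $D$ invertible. Therefore $\Sigma_{\nabla_i}(\Lk{M_i}) = \Sigma_{\d+a_i}(\Lk{A})$, a rank-one spectrum.

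Finally, $(\d+a_i)\ot 1 - 1\ot c = \d\ot 1 - 1\ot(c-a_i)$, so a point $x$ lies in $\Sigma_{\d+a_i}$ exactly when the point with coordinate $c-a_i$ lies in $\Sigma_{\d}$; since translation by $a_i$ is an automorphism of $\Ak$, we obtain $\Sigma_{\d+a_i}(\Lk{A}) = a_i + \Sigma_{\d}(\Lk{A})$. It then remains to insert the spectrum of the single derivation $\d$ on each algebra $A$ in the table, computed in the preceding sections (the case $G=0$, $N=1$, $a_1 = 0$): according to whether $A=\cO(X)$ or $\Hx$, the type of $x$, and $\crk$, this is the disk $\disf{0}{\omega/r}$, the set $\overline{\diso{0}{1/r}}$, and so on, centred at $0$. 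Translating by $a_i$ turns these into the disks centred at $a_i$, and the union over $i$ produces exactly the entries of the table.

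The genuine difficulty lies in this last rank-one computation rather than in the reduction: one must pin down the exact radius — where the constant $\omega$ enters through the $p$-adic growth of $\Nor{}{\d^n}$ — and, more delicately, decide whether the bounding point of the disk belongs to the spectrum, that is, whether one obtains a closed disk or the topological closure of an open disk. Among the reduction steps, the only point requiring attention is the converse half of the second paragraph: the non-invertibility of the block must be argued algebraically inside $\Lk{M_i}\ct_k\Omega$, since, as noted in the introduction, no index theorem is available in that completed tensor product.
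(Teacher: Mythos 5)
Your reduction follows the same route as the paper's own proof: Jordan normal form over the algebraically closed $k$ (the paper's Lemma~\ref{19}, which you phrase as conjugation by a constant matrix $P$), splitting of the spectrum along the block decomposition (Remark~\ref{sec:spectr-assoc-diff}), reduction of each block to the rank-one operator $\d+a_i$ (Lemma~\ref{20} and Proposition~\ref{6}), the translation identity $\Sigma_{\d+a_i}=a_i+\Sigma_{\d}$ (Lemma~\ref{sec:berk-spectr-theory}), and insertion of the spectra of $\d$ computed separately. The one genuine problem is the converse half of your single-block argument. The principle you invoke --- a triangular matrix over a unital ring that is invertible in the full matrix ring must have invertible diagonal entries --- is false when the ring is noncommutative, and $\Lk{A}\ct_k\Omega$ is noncommutative. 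Your parenthetical justification (strictly triangular matrices form a nilpotent ideal of the triangular subalgebra, units lift along it, reading the diagonal is a ring homomorphism) proves only that a triangular matrix is a unit \emph{of the triangular subalgebra} if and only if its diagonal entries are units; invertibility in the ambient matrix ring is a weaker hypothesis, because nothing forces the inverse to be triangular. Indeed, in any ring $R$ containing elements $s,t$ with $ts=1\neq st$ (e.g.\ the two shift operators on a Banach space of sequences), one has
\[
\begin{pmatrix} s & 1-st\\ 0 & t\end{pmatrix}
\begin{pmatrix} t & 0\\ 1-st & s\end{pmatrix}
=
\begin{pmatrix} t & 0\\ 1-st & s\end{pmatrix}
\begin{pmatrix} s & 1-st\\ 0 & t\end{pmatrix}
=
\begin{pmatrix} 1 & 0\\ 0 & 1\end{pmatrix},
\]
so the upper triangular matrix on the left is invertible in $\cM_2(R)$ although neither $s$ nor $t$ is a unit of $R$.

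The claim you need is nevertheless true for your particular matrix $D\cdot\Id+N_i$, and the repair is short; it should replace the triangular-subalgebra argument. Write $X=(x_{u,v})$ for a two-sided inverse of $D\cdot\Id+N_i$ in $\cM_{m_i}(\Lk{A}\ct_k\Omega)$. Since the last row of $N_i$ vanishes, the last row of the identity $(D\cdot\Id+N_i)X=\Id$ reads $D\,x_{m_i,v}=\delta_{m_i,v}$, so $D$ has a right inverse; since the first column of $N_i$ vanishes, the first column of $X(D\cdot\Id+N_i)=\Id$ reads $x_{u,1}D=\delta_{u,1}$, so $D$ has a left inverse; hence $D$ is invertible. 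This is exactly the mechanism of the paper: Lemma~\ref{10} and Remark~\ref{16} extract one-sided invertibility of the diagonal blocks from invertibility of a block-triangular operator, and Remark~\ref{18} upgrades it to two-sided invertibility because $A\<D\>/(D-a)^{n}$ sits in exact sequences with $A\<D\>/(D-a)$ and $A\<D\>/(D-a)^{n-1}$ \emph{in both orders} --- for the Jordan block this is precisely the last-row/first-column symmetry the repair exploits. With this fixed, the rest of your argument (the finite Neumann sum in the forward direction, which rightly avoids any convergence or index-theorem issue, the idempotent splitting, and the translation step) is correct, and the theorem follows, as you say, from the computations of $\Sigma_{\d}$ carried out in Section~\ref{sec:stat-main-result}.
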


Using this result we can prove that when $x$ varies over Berkovich a
segment $(x_1,x_2)\subset \Ak$, the behaviour of the spectrum is left continuous, and continuous at
the points of type (3)
(cf. Section \ref{sec:spectrum-variation}).

The paper is organized  as follows. Section \ref{sec:defin-notat}
 is devoted to recalling the definitions and properties. It is divided
into three parts: in the first one we recall some basic definitions and
properties of $k$-Banach spaces, in the second we provide settings and notations related
to the affine line $\Ak$ and in the last one
 we recall the definition of the spectrum given by Berkovich and some
properties.

In Section \ref{sec:diff-modul-spectr}, we introduce the spectrum
associated to a differential module, and show how it behaves under
exact sequences. The main result of this section is the following:
\begin{pro}
		We suppose that $k$ is algebraically closed. Let $A$
                be as in Theorem~\ref{sec:introduction}, and let $d=g(S) d/dS$, with
                $g\in A$. Let
                $(M,\nabla)$ be a differential module over $(A,d)$
                such that there
  exists a basis for which the associated matrix $G$ has constant
  entries (i.e $G\in \cM_n(k)$). Then the spectrum of $\nabla$ is
                $\Sigma_{\nabla}=\bigcup\limits_{i=1}^{n}(a_i+\Sigma_d(\Lk{A}))$, where
                $\{a_1,\dots, a_n\}\subset k$ is the multiset of the eigenvalues of $G$.
               
              \end{pro}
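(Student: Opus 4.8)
The plan is to reduce to two stable operations on the spectrum — translation by a constant of $k$ and passage to a direct sum — and then to handle a single nilpotent block by hand. Since $G\in\mat{n}{k}$ and $k$ is algebraically closed, $G$ admits a decomposition of $k^n$ into generalized eigenspaces $k^n=\bigoplus_{i}V_{a_i}$, indexed by the distinct eigenvalues $a_i$, each $V_{a_i}$ being a $k$-rational $G$-stable subspace. As $d$ acts componentwise, each $M_i:=V_{a_i}\ot_k A$ is a sub-differential-module and $M=\bigoplus_i M_i$. The associated projections are constant matrices, hence bounded and still defined after the completed base change $\Lk{M}\Ct{k}\Hx$ used to define the Berkovich spectrum; they commute with $\nabla\ot 1-1\ot T(x)$ and therefore with its inverse whenever it exists. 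Consequently $\nabla\ot 1-1\ot T(x)$ is invertible if and only if each of its blocks is, and $\Sigma_\nabla(\Lk{M})=\bigcup_i\Sigma_{\nabla|_{M_i}}(\Lk{M_i})$. This is the direct-sum instance of the behaviour of the spectrum under exact sequences established earlier in this section.

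It remains to compute the spectrum of a single block. On $M_i$ the matrix of $G$ is $a_i\Id+N$ with $N$ nilpotent, so $\nabla|_{M_i}=d+a_i\Id+N$. The translation identity $\Sigma_{f+a}=a+\Sigma_f$ for $a\in k$ holds because $(f+a)\ot 1-1\ot T(x)=f\ot 1-1\ot(T(x)-a)$ and $T\mapsto T-a$ is the translation automorphism of $\Ak$; it reduces us to proving $\Sigma_{\nabla_0}=\Sigma_d(\Lk{A})$ for $\nabla_0:=d\,\Id+N$. Fix $x\in\Ak$ and set $D:=d-T(x)$, a bounded operator commuting with the constant matrix $N$, and work in $\Lk{M_i}\Ct{k}\Hx$. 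If $x\notin\Sigma_d(\Lk{A})$, then $D$ is invertible after base change, $D^{-1}N$ is nilpotent, and $\Id+D^{-1}N$ has the \emph{finite} (hence bounded) inverse $\sum_{j=0}^{m-1}(-D^{-1}N)^j$, where $m$ is the size of $N$; thus $\nabla_0-T(x)=D\,\Id+N$ is invertible and $x\notin\Sigma_{\nabla_0}$.

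For the converse inclusion, choose the Jordan basis in which $N$ is strictly upper triangular, so that its first column and its last row vanish. Suppose $\nabla_0-T(x)=D\,\Id+N$ has a bounded two-sided inverse $\Psi=(\Psi_{lm})$, viewed as an $m\times m$ matrix of bounded operators on $A\Ct{k}\Hx$. Reading the last row of $(D\,\Id+N)\Psi=\Id$ gives $D\,\Psi_{mm}=\Id$ (the contribution of $N$ vanishes since its last row is zero), and reading the first column of $\Psi(D\,\Id+N)=\Id$ gives $\Psi_{11}\,D=\Id$ (the contribution of $N$ vanishes since its first column is zero). Hence $D$ admits a bounded right inverse and a bounded left inverse, so it is invertible in $\Lk{A}\Ct{k}\Hx$ and $x\notin\Sigma_d(\Lk{A})$. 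This yields $\Sigma_{\nabla_0}=\Sigma_d(\Lk{A})$, whence $\Sigma_{\nabla|_{M_i}}=a_i+\Sigma_d(\Lk{A})$.

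Assembling the blocks gives $\Sigma_\nabla(\Lk{M})=\bigcup_i(a_i+\Sigma_d(\Lk{A}))$, and since the union over the distinct eigenvalues equals the union over the multiset $\{a_1,\dots,a_n\}$ this is the claimed formula. I expect the main obstacle to be precisely the converse inclusion just treated: because the base field may be trivially valued, the Banach open mapping theorem is unavailable, so one cannot deduce invertibility of $D$ from its bijectivity and must instead exhibit \emph{bounded} left and right inverses explicitly. The nilpotent, triangular structure of $N$ is exactly what makes this possible — it both forces the relevant Neumann series to terminate and makes the extremal matrix entries of $\Psi$ genuine one-sided inverses of $D$ — and one must take care that all these manipulations are performed inside the completed tensor product $\Lk{M}\Ct{k}\Hx$ rather than merely algebraically.
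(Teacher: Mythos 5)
Your proof is correct, and it reaches the result by a genuinely different route in the key step. The paper's own proof (Proposition~\ref{6}) shares your outer skeleton --- Jordan theory plus the translation identity $\Sigma_{d+a}=a+\Sigma_d$, the latter being its Lemma~\ref{sec:berk-spectr-theory} applied to $P(T)=T+a$ --- but it decomposes $(M,\nabla)$ all the way into cyclic pieces $A\langle D\rangle/(D-a_i)^{n_{i,j}}$ (Lemma~\ref{19}) and then argues by induction on exact sequences: the two factorizations $(D-a)^n=(D-a)(D-a)^{n-1}=(D-a)^{n-1}(D-a)$ give exact sequences in both directions (Lemma~\ref{cha:diff-modul-spectr}), and Remarks~\ref{16} and~\ref{18} (one-sided invertibility of the diagonal blocks of an invertible block-triangular operator) force $\Sigma_D=\Sigma_{d+a}$ (Lemma~\ref{20}). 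You instead stop the decomposition at generalized eigenspaces and treat the whole nilpotent part $N$ at once: the terminating Neumann series $\sum_{j=0}^{m-1}(-D^{-1}N)^{j}$ gives $\Sigma_{\nabla_0}\subset\Sigma_d(\Lk{A})$, and extracting the corner entries $\Psi_{11}$, $\Psi_{mm}$ of the inverse gives bounded left and right inverses of $D$, hence $\Sigma_d(\Lk{A})\subset\Sigma_{\nabla_0}$. This corner-entry extraction is precisely the paper's Remark~\ref{16} made explicit, so the two arguments exploit the same underlying triangularity; what yours buys is self-containedness (no differential-polynomial formalism, no induction on the rank), and it correctly isolates the real difficulty --- over a possibly trivially valued $k$ the open mapping theorem is unavailable, so invertibility must be witnessed by explicitly bounded inverses, which your finite sums and corner entries supply. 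What the paper's route buys in exchange is reusable machinery: Lemma~\ref{cha:diff-modul-spectr} and Remarks~\ref{16}/\ref{18} serve it for general exact sequences, not only split ones. One small imprecision you should fix: the entries $\Psi_{lm}$ live in $\Lk{A}\Ct{k}\Hx$ via the identification $\Lk{M_i}\Ct{k}\Hx\simeq\cM_m(\Lk{A}\Ct{k}\Hx)$, not literally in $\LL{\Hx}{A\Ct{k}\Hx}$ (the canonical map of Lemma~\ref{3} between these two algebras need not be injective); since all your manipulations are purely ring-theoretic identities among matrix entries, nothing in the argument is affected, but the one-sided inverses of $D$ should be asserted as elements of $\Lk{A}\Ct{k}\Hx$, which is exactly what non-membership in $\Sigma_d(\Lk{A})$ requires.
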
%
In particular the spectrum \emph{highly depends on the choice of the
  derivation $d$}.
In this paper we choose $d=\d$. 
This claim shows the importance of computing the spectrum of $\Sigma_d(\Lk{A})$. Therefore, in Section \ref{sec:stat-main-result}, a large part is devoted  to the
computation of the spectrum of
$\d$ acting on various rings.
In the last part of this section, we state and prove the main
result.

In the last section, we will explain, in the case of a
differential equation with constant coefficients, that the spectrum
associated to $(M,\nabla)$ over $(\Hx,d)$ satisfies a continuity
property, when $x$ varies over a segment $(x_1,x_2)\subset \Ak$.
\\%

{\bf Acknowledgments} The author wish to express her gratitude to her
advisors Andrea Pulita and Jérôme Poineau for their precious advice
and suggestions, and for carful reading. We also thank F. Beukers and F. Truc for
useful occasional discussions. 


\section{Definitions and notations}\label{sec:defin-notat}

All rings are with unit element. We denote by $\RR$ the field of real
numbers, and ${\R+=\{r\in \RR|r\geq
0\}}$. In all the paper $(k,|.|)$ will be a valued field of
characteristic 0, complete with
respect to an ultrametric absolute value $|.|:k\to \R+$
(i.e. verifying $|1|=1$, $|a\cdot b|=|a||b|$, $|a+b|\leq
\max(|a|,|b|)$ for all $a$, $b\in k$ and $|a|=0$ if and only if $a=0$). We set $|k|:=\{r\in \R+|\exists a\in k \text{
  such that } r=|a|\}$, $k^{\circ}:=\{a\in k||a|\leq 1\}$, ${k^{\circ\circ}:=\{a\in
k||a|< 1\}}$ and ${\tilde{k}:=k^\circ / k^{\circ\circ}}$. Let $E(k)$ be
the category whose objects are pairs $(\Omega,|.|)$, where $\Omega$ is a
field extension of $k$ complete  with respect to $|.|$, and whose
morphisms are the isometric rings morphisms. For $(\Omega,|.|)\in
E(k)$, we set $\Omega^{alg}$ to be an algebraic closure of $\Omega$,
the absolute value extends uniquely to an absolute value defined on
$\Omega^{alg}$. We denote by $\widehat{\Omega^{alg}}$ the
completion of $\Omega^{alg}$ with respect to this absolute value.  

\subsection{Banach spaces}\label{sec:banach-space-1}
An {\em ultrametric norm} on a $k$-vector space $M$ is a map $\nor{.}: M\to \R+$
verifying:
\begin{itemize}
\item $\nor{m}=0\Leftrightarrow m=0$.
\item $\forall m\in M$, $\forall \lambda\in k$; $\nor{\lambda
    m}=|\lambda|\nor{m}$.
\item $\forall m,\, n\in M$, $\nor{m+n} \leq \max(\nor{m}, \nor{n})$.
\end{itemize}

A normed $k$-vector space $M$ is a $k$-vector space endowed with an
ultrametric norm $\nor{.}$. If moreover $M$ is complete with respect
to this norm, we say that $M$ is {\em $k$-Banach space}. A $k$-linear map
$\phi: M\to N$ between two normed $k$-vector spaces is a {\em bounded
$k$-linear map} satisfying the following condition:
\[\exists C\in \R+,\; \forall m\in M; \q \nor{\phi(m)}\leq C\nor{m}.\]
If $C=1$ we say that $\phi$ is a {\em contracting map}.

Let $M$ and $N$ be two ultrametric normed
$k$-vector spaces. We endow the tensor product $M\ot_kN$ with the following
norm:
\begin{equation}
\label{eq:4}
\Fonction{\nor{.}: M\ot_kN}{ \R+}{f}{\inf \{
\max\limits_i\nor{m_i}\nor{n_i}|\q f=\sum\limits_i m_i\ot n_i\}}.
\end{equation}
The completion of
$M\ot_k N$ for this norm will be denoted by $M\ct_k N$.

An ultrametric norm on a $k$-algebra $A$ is an ultrametric norm
$\nor{.}: A\to \R+$ on the $k$-vector space $A$, satisfying the
additional properties:
\begin{itemize}
\item $\nor{1}=1$ if $A$ has a unit element.
\item $\forall m,\;n\in A;\q \nor{mn}\leq\nor{m}\nor{n}$.
\end{itemize}

A normed $k$-algebra $A$ is a $k$-algebra endowed with an
ultrametric norm $\nor{.}$. If moreover $A$ is complete with respect
to this norm, we say that $A$ is a {\em $k$-Banach algebra}.

We set ${\bf Ban}_k$ the category whose objects are  $k$-Banach vector
spaces and arrows are bounded $k$-linear maps. An isomorphism in this
category will be called bi-bounded isomorphism.  We set {\bf BanAL}$_k$
the category whose objects are $k$-Banach algebras and arrows are
bounded morphisms of $k$-algebras.

\begin{defi}\label{sec:banach-space}
  Let $A$ be a $k$-Banach algebra. The {\em spectral semi-norm} associated to the
  norm of $A$ is the map:
  \begin{equation}
    \label{eq:5}
    \Fonction{\Nsp{A}{.}: A} {\R+} {f} {\lim\limits_{n\to
    +\infty }{\nor{f^n}^{\frac{1}{n}}}=\inf\limits_{n\in \NN}\nor{f^n}^{\fra{n}}}.
\end{equation}
The existence of the limit and its equality with the infimum are well
known (cf. \cite[Définition 1.3.2]{bousp}).  If $\Nsp{A}{.}=\nor{.}$,
we say that $A$ is a {\em uniform algebra}.
  \end{defi}

  \begin{lem}\label{sec:banach-spaces}
    Let $M$ be a $k$-vector space and let $\nor{.}: M\to \R+$ be a map
    verifying the following properties: 
\begin{itemize}
\item $\nor{m}=0\Leftrightarrow m=0$.
\item $\forall m\in M$, $\forall \lambda\in k$; $\nor{\lambda
    m}\leq|\lambda|\nor{m}$.
\item $\forall m,\, n\in M$, $\nor{m+n} \leq \max(\nor{m}, \nor{n})$.
\end{itemize}
Then $\nor{.}$ is an ultrametric norm on $M$.
\end{lem}

\begin{proof}
  See \cite[Section 2.1.1, Proposition 4]{Bosc}. 
\end{proof}
  
  \begin{lem}\label{1}
		Let $\Omega\in E(k)$ and let $M$ be a $k$-Banach space. Then, the inclusion ${M\hookrightarrow M\ct_k \Omega}$ is an isometry. In particular, for all $v\in M$ and $c\in \Omega$ we have ${\nor{ v\ot c}=|c|\nor{v}}$.  
              \end{lem}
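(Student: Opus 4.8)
The plan is to prove directly the sharper identity $\nor{v\ot c}=|c|\nor{v}$ for all $v\in M$ and $c\in\Omega$, of which the isometry of the inclusion $M\hookrightarrow M\ct_k\Omega$ is the special case $c=1$. One inequality is immediate from the definition \eqref{eq:4} of the tensor norm: since $v\ot c$ is itself a one-term representation, $\nor{v\ot c}\leq\nor{v}\,|c|$. Moreover, $M\ct_k\Omega$ is by construction the completion of $(M\ot_k\Omega,\nor{.})$, so the inclusion $M\ot_k\Omega\hookrightarrow M\ct_k\Omega$ is isometric and it suffices to estimate the projective norm \eqref{eq:4} on the algebraic tensor product itself.

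For the reverse inequality I would first settle the case $c=1$. Fix $v\in M$ and an arbitrary representation $v\ot 1=\sum_{i=1}^{n}m_i\ot n_i$ in $M\ot_k\Omega$, and let $V\subseteq\Omega$ be the finite-dimensional $k$-subspace spanned by $1,n_1,\dots,n_n$, endowed with the restriction of $|.|$. The key input is that, $k$ being complete, for every $t\in(0,1)$ the normed space $V$ admits a $t$-orthogonal basis, and one may arrange it as $f_1,\dots,f_d$ with $f_1=1$ (extend the single unit vector $1$ by successively choosing vectors nearly realizing the relevant quotient norms); this is the standard orthogonalization over complete valued fields (cf. \cite{Bosc}). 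Writing $n_i=\sum_j a_{ij}f_j$ with $a_{ij}\in k$, and using that the relation $v\ot 1=\sum_i m_i\ot n_i$ already holds in $M\ot_k V\cong\bigoplus_j M f_j$ (by injectivity of $M\ot_k V\hookrightarrow M\ot_k\Omega$), comparison of the $f_1$-components yields $v=\sum_i a_{i1}m_i$. The $t$-orthogonality gives $|n_i|\geq t\,|a_{i1}|\nor{f_1}=t\,|a_{i1}|$, whence $\nor{v}\leq\max_i|a_{i1}|\nor{m_i}\leq t^{-1}\max_i|n_i|\nor{m_i}$. Taking the infimum over all representations gives $\nor{v}\leq t^{-1}\nor{v\ot 1}$, and letting $t\to 1$ we obtain $\nor{v}\leq\nor{v\ot 1}$; together with the easy inequality this proves the isometry.

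To pass from $c=1$ to an arbitrary $c\in\Omega^\times$ (the case $c=0$ being trivial), I would exploit that multiplication $\mu_c\colon\Omega\to\Omega$, $x\mapsto cx$, is a $k$-linear automorphism scaling $|.|$ exactly by $|c|$. Consequently $\mathrm{id}_M\ot\mu_c$ scales the projective norm \eqref{eq:4} by $|c|$ (the inequality $\leq$ is direct, and applying the same bound to $\mu_{c^{-1}}$ gives $\geq$), and it extends to an automorphism of $M\ct_k\Omega$ with the same scaling property. Since $(\mathrm{id}_M\ot\mu_c)(v\ot 1)=v\ot c$, we conclude $\nor{v\ot c}=|c|\,\nor{v\ot 1}=|c|\,\nor{v}$, which is the asserted ``in particular'' statement.

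The delicate point is the orthogonalization step, precisely because $k$ is only assumed complete and not spherically complete: an \emph{exact} norm-preserving Hahn–Banach extension of the coordinate functional picking out the coefficient of $1$ need not exist. The $t$-orthogonal basis is exactly what replaces Hahn–Banach here, at the cost of the harmless factor $t^{-1}$ that disappears in the limit $t\to 1$. The only other care required is to justify reading the tensor relation off componentwise, which is guaranteed by the injectivity of $M\ot_k V\hookrightarrow M\ot_k\Omega$ for the finite-dimensional subspace $V$.
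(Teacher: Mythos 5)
Your proof is correct, but it does by hand what the paper outsources, so the two arguments are genuinely different in their key step. The paper's own proof has two ingredients: it \emph{quotes} the isometry $M\ito M\ct_k\Omega$ from \cite[Lemma~3.1]{poi}, and it then obtains the scaling identity by observing that the tensor norm on $M\ct_k\Omega$, viewed as an $\Omega$-vector space, satisfies $\nor{\lambda u}\leq|\lambda|\nor{u}$, so that Lemma~\ref{sec:banach-spaces} (whose content is precisely the ``apply the inequality to $\lambda$ and to $\lambda^{-1}$'' trick) forces $\nor{\lambda u}=|\lambda|\nor{u}$; taking $u=v\ot 1$ and $\lambda=c$ gives the claim. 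Your third step, with the operators $\mu_c$ and $\mu_{c^{-1}}$, is exactly that trick unpacked, so there you coincide with the paper. The genuine divergence is your second step: instead of citing Poineau, you prove the isometry directly by $t$-orthogonalization in the finite-dimensional $k$-subspace $V\subset\Omega$ spanned by $1,n_1,\dots,n_n$. Your execution is sound: completeness of $k$ is what makes finite-dimensional subspaces of $\Omega$ closed, hence the quotient norms in the inductive construction of a $t$-orthogonal basis containing the vector $1$ are strictly positive and the extension step goes through; and reading off the $f_1$-component is legitimate because $M\ot_k V\to M\ot_k\Omega$ is injective. What your route buys is a self-contained argument that makes visible why only completeness, and not spherical completeness or any Hahn--Banach extension, is needed --- the harmless loss $t^{-1}$, which disappears as $t\to 1$, is exactly the price of dropping those hypotheses. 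What the paper's route buys is brevity, at the cost of deferring the analytic core to the literature, where it is established by essentially the orthogonalization argument you wrote out.
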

              
\begin{proof}
  Since $\Omega$ contains isometrically $k$, the morphism $M\ct_k
  \Omega\to \Omega$ (resp. $\Omega\to M\ct_k\Omega$) is an isometry
  (cf. \autocite[Lemma~3.1]{poi}). We know that $\nor{v\ot c}\leq |c|\nor{v}=|c|\nor{v\ot 1}$ for all $v\in
  M$ and $c\in\Omega$.  Therefore, by Lemma \ref{sec:banach-spaces}
  the tensor norm is a norm on $M\ct_k \Omega$ as an $\Omega$-vector
  space. Consequently, we obtain $\nor{v\ot c}=|c|\nor{v}$ for all
  $v\in M$ and $c\in\Omega$.
\end{proof}

\begin{pro}\label{2}
		Let $M$ be a $k$-Banach space, and $B$ be a uniform
                $k$-Banach algebra. Then in $M\ct_k B$ we have:
		\[ \forall m\in M, \; \forall f\in B, \; \nor{m\ot f}=\nor{m}\nor{f}. \]
		
	\end{pro}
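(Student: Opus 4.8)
The inequality $\nor{m\ot f}\leq \nor{m}\nor{f}$ is immediate from the definition of the tensor norm \eqref{eq:4}, applied to the trivial decomposition $m\ot f=m\ot f$. Hence the whole content lies in the reverse inequality $\nor{m\ot f}\geq \nor{m}\nor{f}$, and the plan is to produce, for a fixed $f$, a field-valued character of $B$ that detects the full spectral norm of $f$, and then transport the computation to a complete field extension where Lemma~\ref{1} applies directly.

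First I would bring in Berkovich's spectral theory for the uniform Banach algebra $B$: its Berkovich spectrum $\cM(B)$ (the set of bounded multiplicative seminorms) is nonempty and compact, and for every $f\in B$ one has $\Nsp{B}{f}=\Sup_{x\in\cM(B)}|f(x)|$, the supremum being attained by compactness (cf. \cite{Ber}). Since $B$ is uniform, $\nor{f}=\Nsp{B}{f}$, so I can fix a point $x\in\cM(B)$ with $|f(x)|=\nor{f}$. Evaluation at $x$ gives a bounded character $\chi\colon B\to\Hx$ into the complete residue field. This field lies in $E(k)$: a bounded multiplicative seminorm satisfies $|b|_x\leq\nor{b}$ and restricts to $|\cdot|$ on $k\cdot 1_B$ (by multiplicativity, $|\lambda|_x|\lambda^{-1}|_x=1$ forces $|\lambda|_x=|\lambda|$), so $k\hookrightarrow\Hx$ isometrically and $\chi$ is contracting.

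Next I would invoke functoriality of the completed tensor product. Tensoring $\chi$ with $\mathrm{id}_M$ gives a $k$-linear map $\mathrm{id}_M\ot\chi\colon M\ot_k B\to M\ot_k\Hx$, $m\ot b\mapsto m\ot\chi(b)$, which is contracting: checking against \eqref{eq:4}, for $u=\sum_i m_i\ot b_i$ one has $\nor{\sum_i m_i\ot\chi(b_i)}\leq\max_i\nor{m_i}\,|\chi(b_i)|\leq\max_i\nor{m_i}\nor{b_i}$, and taking the infimum over representations yields $\leq\nor{u}$. Thus it extends to a contracting map $\mathrm{id}_M\ct\chi\colon M\ct_k B\to M\ct_k\Hx$. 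Applying this to $m\ot f$, contraction gives $\nor{m\ot f}_{M\ct_k B}\geq\nor{m\ot\chi(f)}_{M\ct_k\Hx}$, and Lemma~\ref{1} computes the right-hand side as $|\chi(f)|\nor{m}=|f(x)|\nor{m}=\nor{f}\nor{m}$, which is exactly the desired bound.

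The routine pieces (the easy inequality, the contraction estimate, the functoriality) are standard; the one genuinely external input, and the step I would treat most carefully, is the identity $\nor{f}=\Sup_{x}|f(x)|$ for a uniform algebra together with attainment of the supremum, since this is precisely what lets me replace $B$ by an honest complete field and invoke Lemma~\ref{1}. Should one wish to bypass attainment, the same argument run with a sequence $x_n$ satisfying $|f(x_n)|\to\nor{f}$ yields $\nor{m\ot f}\geq|f(x_n)|\nor{m}$ for every $n$, and passing to the limit concludes equally well.
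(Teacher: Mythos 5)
Your proposal is correct and follows essentially the same route as the paper's own proof: both pass through the Berkovich spectrum $\cM(B)$, use the contracting map $M\ct_k B\to M\ct_k\Hx$ induced by evaluation at a point $x\in\cM(B)$, compute $\nor{m\ot f(x)}=\nor{m}|f(x)|$ via Lemma~\ref{1}, and conclude from uniformity that $\nor{f}=\max_{x\in\cM(B)}|f(x)|$. The only cosmetic difference is that you fix a single point attaining the maximum (or a maximizing sequence), while the paper bounds $\nor{m\ot f}$ below by $\nor{m}|f(x)|$ for every $x$ and then takes the maximum.
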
	
	
	\begin{proof}
		Let $\cM(B)$ be the analytic spectrum of $B$ (see
                \cite[Chapter 1]{Ber}). For
                all $x$ in $\cM(B)$ the canonical map $B\to
                \Hx$ is a contracting map, therefore the map
                $M\ct_kB\rightarrow M\ct_k\Hx$ is a contracting map
                too. Then, by Lemme~\ref{2} we have:
		
		\[\forall x \in \cM(B): \forall m\in M, \; \forall
                  f\in B, \; \nor{m\ot f(x)}=\nor{m}|f(x)|\leq\nor{m\ot f} \leq\nor{m}\nor{f} \] 
		thus,
		
		\[\forall m\in M, \; \forall f\in B, \; \nor{m}\max_{x \in \cM(B)}|f(x)|\leq\nor{m\ot f}\leq\nor{m}\nor{f}. \] 
		Since $B$ is uniform, we have $\max\limits_{x \in
                  \cM(B)}|f(x)|=\nsp{f}=\nor{f}$ (cf. \cite[Theorem 1.3.1]{Ber}), and $\nor{m}\nor{f}\leq\nor{m\ot f}\leq\nor{m}\nor{f}.$
		
              \end{proof}

Let $M$ and $N$ be two $k$-Banach spaces. We denote by $\Lk{M,N}$ the $k$-Banach
algebra of bounded $k$-linear maps $M\to N$ endowed with the operator norm:

\begin{equation}
  \label{eq:14}
  \Fonction{\Lk{M,N}}{\R+}{\phi}{\Sup_{m\in M\setminus\{0\}}\frac{\nor{\phi(m)}}{\nor{m}}}.
\end{equation}
We set $\Lk{M}:=\Lk{M,M}$. 

	\begin{lem}\label{3}
		Let  $\Omega\in E(k)$. There exists an isometric
                $k$-linear map ${\Lk{M}\ito \Lk{M\ct_k\Omega}}$ which
                extends to an $\Omega$-linear contracting map  $\Lk{M}\ct_k\Omega\to\LL{\Omega}{M\ct_k\Omega}$. 	
	\end{lem}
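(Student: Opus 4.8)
The plan is to send a bounded $k$-linear endomorphism $\phi\in\Lk{M}$ to the $\Omega$-linear endomorphism $\phi\ct\Id$ of $M\ct_k\Omega$ obtained by tensoring $\phi$ with the identity $\Id=\Id_\Omega$ of $\Omega$, and then to linearise this assignment over $\Omega$. First I would define $\phi\ot\Id$ on the algebraic tensor product $M\ot_k\Omega$ by $(\phi\ot\Id)(\sum_i m_i\ot c_i)=\sum_i\phi(m_i)\ot c_i$. For any representation $f=\sum_i m_i\ot c_i$, the ultrametric tensor norm of~\eqref{eq:4} gives $\nor{(\phi\ot\Id)(f)}\leq\max_i\nor{\phi(m_i)}|c_i|\leq\nor{\phi}\max_i\nor{m_i}|c_i|$, and taking the infimum over all such representations of $f$ yields $\nor{(\phi\ot\Id)(f)}\leq\nor{\phi}\nor{f}$. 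Hence $\phi\ot\Id$ is bounded of norm at most $\nor{\phi}$ and extends by continuity to an element $\phi\ct\Id$ of $\LL{\Omega}{M\ct_k\Omega}\subseteq\Lk{M\ct_k\Omega}$, the $k$- and $\Omega$-operator norms coinciding since they are the supremum of the same ratios. This produces a $k$-linear map $\iota\colon\phi\mapsto\phi\ct\Id$ with $\nor{\iota(\phi)}\leq\nor{\phi}$.

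To see that $\iota$ is an isometry I would test it on the elementary tensors $m\ot 1$. By Lemma~\ref{1} the inclusion $M\ito M\ct_k\Omega$ is isometric, so $\nor{m\ot 1}=\nor{m}$ and likewise $\nor{\phi(m)\ot 1}=\nor{\phi(m)}$. Since $(\phi\ct\Id)(m\ot 1)=\phi(m)\ot 1$, the operator norm of $\phi\ct\Id$ is bounded below by $\Sup_{m\neq 0}\nor{\phi(m)}/\nor{m}=\nor{\phi}$. Combined with the upper bound of the first paragraph, this gives $\nor{\iota(\phi)}=\nor{\phi}$, so $\iota$ is the required isometric $k$-linear embedding $\Lk{M}\ito\Lk{M\ct_k\Omega}$.

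Finally I would extend $\iota$ over $\Omega$. The map $\Lk{M}\times\Omega\to\LL{\Omega}{M\ct_k\Omega}$, $(\phi,c)\mapsto c\cdot(\phi\ct\Id)$, where $c\cdot$ denotes the $\Omega$-scalar action on the target, is $k$-bilinear and therefore factors through a $k$-linear map $\Lk{M}\ot_k\Omega\to\LL{\Omega}{M\ct_k\Omega}$ sending $\phi\ot c$ to $c\cdot(\phi\ct\Id)$; this map is $\Omega$-linear by construction. For $\Phi=\sum_i\phi_i\ot c_i$ the ultrametricity of the operator norm together with the isometry just established gives $\nor{\sum_i c_i(\phi_i\ct\Id)}\leq\max_i|c_i|\,\nor{\phi_i\ct\Id}=\max_i|c_i|\,\nor{\phi_i}$, and passing to the infimum over representations of $\Phi$ shows the map is contracting. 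It therefore extends to the completion, giving the desired $\Omega$-linear contracting map $\Lk{M}\ct_k\Omega\to\LL{\Omega}{M\ct_k\Omega}$. The only genuinely delicate point is that one should not hope for more than contractivity here: as noted in the introduction, $\Lk{M}\ct_k\Omega$ does not coincide with $\LL{\Omega}{M\ct_k\Omega}$ unless $\Omega/k$ is finite, so this last extension is in general neither isometric nor surjective, and no naive inverse construction will improve the bound.
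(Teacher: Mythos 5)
Your proof is correct and follows essentially the same route as the paper's: define $\phi\ct\Id$ by tensoring with the identity and bound it via the tensor norm of~\eqref{eq:4}, obtain the isometry from the isometric embedding $M\ito M\ct_k\Omega$ of Lemma~\ref{1}, and then extend $\Omega$-linearly to $\Lk{M}\ct_k\Omega$ by the same contractivity estimate on finite sums $\sum_i\phi_i\ot c_i$. The only cosmetic difference is that you verify boundedness of $\phi\ot\Id$ by a direct norm estimate where the paper cites a continuity lemma from the literature, which if anything makes your argument more self-contained.
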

	
	\begin{proof}
		Let $\varphi\in\Lk{M}$, we have the bilinear map:
		
		\[
                    \Fonction{\varphi\times 1: M\times\Omega}{M\ot_k\Omega}
			 {(x,a)}{\varphi(x)\ot a} \]
		where $M\times \Omega$ is endowed with the product
                topology and $M\ot\Omega$ with the topology induced
                by tensor norm (cf. \eqref{eq:4}). The map $\varphi\times 1$ is continuous see \autocite[Chap. IV Lemma 17.1]{sch}. The diagram below:
		
		\[\xymatrix{M\times\Omega\ar[d]_{\ot}\ar[r]^{\varphi\times 1}& M\ot_k \Omega\\
			M\ot_k \Omega\ar[ur]_{\exists !\varphi\ot 1}}\] 
		is commutative, in addition $\varphi\ot 1$ is
                continuous see \autocite[Chap. IV Lemma 17.1]{sch}.
                By the universal property of the completion of a
                metric space, the map $\varphi\ot 1$
                extends to a continuous map $\varphi \ct 1: M\ct_k\Omega \to M\ct_k\Omega$. 
		So we obtain a $k$-linear map
                ${\Lk{M}\hookrightarrow \LL{\Omega}{M\ct_k\Omega}}$.
                We prove now that it is an isometry. Indeed, let $m\in M$ and $a\in\Omega$ then by  Lemma \ref{1}:
		\[\nor{\varphi\ot 1(m\ot a)}=\nor{\phi(m)\ot
                    a}=\nor{\varphi(m)}|a|\leq \nor{\varphi} \nor{m}
                  |a|=\nor{\phi}\nor{m\ot a}.\]
		Now, let $x=\sum\limits_i m_i\ot a_i\in M\ct_k\Omega$, then  we have:
		
		\[ \nor{\varphi\ct1(x)}\leq
                  \inf\{\max_i(\nor{\varphi(m_i)}|a_i|)\; |\;x=\sum_im_i\ot
                  a_i\}\leq \nor{\varphi}\inf\{\max_i \nor{m_i\ot a_i}
                  |\; x=\sum_im_i\ot
                  a_i \}. \]
		Consequently, \[\|\varphi\ct 1\|\leq \|\varphi\|. \]
		On other hand, since $ M\hookrightarrow M\ct_k\Omega$ is an isometry and $\varphi\ct 1_{|_M}=\varphi$, we have $\|\varphi\|\leq \|\varphi\ct 1\|.$\\
		The map $\Lk{M}\hookrightarrow
                \LL{\Omega}{M\ct_k\Omega}$ extends to a continuous
                $\Omega$-linear map
                ${\Lk{M}\ct_k\Omega\hookrightarrow
                  \LL{\Omega}{M\ct_k\Omega}}$. We need to prove now
                that it is a contracting map. Let ${\psi=\sum_i
                \varphi_i\ot a_i}$ be an element of $\Lk{M}\ct_k\Omega$, its image in
                $\LL{\Omega}{M\ct_k\Omega}$ is the element
                $\sum_ia_i\varphi_i\ct 1$. we have:

                \[\nor{\sum_i a_i\varphi_i\ct 1}\leq
                  \max_i{\nor{a_i\varphi_i\ct
                      1}}=\max_i\nor{\phi_i}|a_i|.\]
                Consequently,
                \[\nor{\sum_i a_i\varphi_i\ct 1}\leq
                 \inf\{\max_i\nor{\varphi_i}|a_i||\psi=\sum_i\varphi_i\ot
                 a_i\}=\nor{\psi}.\]
               Hence we obtain the result.

             \end{proof}
             
	\begin{lem}
		Let $M$ be a $k$-Banach space and $L$ be a finite
                extension of $k$. Then we have a bi-bounded isomorphism: \[\Lk{M}\ct_k L\simeq \LL{L}{M\ct_k L}.  \] 
	\end{lem}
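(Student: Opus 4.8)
The plan is to apply Lemma~\ref{3} with $\Omega=L$ and to show that, because $L/k$ is finite, the contracting $L$-linear map it produces is in fact a bi-bounded isomorphism. First I would record that a finite extension $L$ of the complete field $k$ is itself complete, so $L\in E(k)$ and Lemma~\ref{3} applies, giving an $L$-linear contracting map $\Phi:\Lk{M}\ct_k L\to\LL{L}{M\ct_k L}$. Since $L$ is finite dimensional over $k$, both completed tensor products coincide with the algebraic ones: $M\ct_k L=M\ot_k L$ and $\Lk{M}\ct_k L=\Lk{M}\ot_k L$, as a finite direct sum of copies of a Banach space is complete. Fixing a $k$-basis $e_1,\dots,e_n$ of $L$ we get $M\ot_k L=\bigoplus_{i=1}^n M\ot e_i$; all norms on a finite dimensional $k$-vector space being equivalent, the coordinate projections $\pi_i:M\ot_k L\to M$ (sending $\sum_j m_j\ot e_j$ to $m_i$) are bounded.

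The core of the argument is to build an explicit inverse of $\Phi$. Given $\psi\in\LL{L}{M\ct_k L}$, $L$-linearity forces $\psi$ to be determined by its restriction to $M\ot 1$, since $m\ot a=a\cdot(m\ot 1)$ for $a\in L$. Composing the isometric inclusion $M\ito M\ct_k L$ of Lemma~\ref{1} with $\psi$ and then with $\pi_i$, I would set $\varphi_i:=\pi_i\circ\psi_{|M\ot 1}\in\Lk{M}$ (bounded, with $\nor{\varphi_i}\leq\nor{\pi_i}\nor{\psi}$), and define $\Psi(\psi):=\sum_{i=1}^n\varphi_i\ot e_i\in\Lk{M}\ot_k L$. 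A direct check shows $\Phi\circ\Psi=\Id$ and $\Psi\circ\Phi=\Id$: on the one hand $\Phi(\sum_i\varphi_i\ot e_i)$ sends $m\ot 1$ to $\sum_i\varphi_i(m)\ot e_i=\psi(m\ot 1)$ by reconstruction from coordinates, and both $L$-linear maps then agree everywhere; on the other hand the components $\varphi_i$ attached to $\Phi(\sum_i\varphi_i\ot e_i)$ recover the original ones by the direct sum decomposition.

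Finally I would conclude the bi-boundedness. The map $\Phi$ is contracting by Lemma~\ref{3}, and the inverse $\Psi$ is bounded because $\nor{\Psi(\psi)}=\nor{\sum_i\varphi_i\ot e_i}\leq\max_i\nor{\varphi_i}|e_i|\leq(\max_i\nor{\pi_i}|e_i|)\nor{\psi}$, a fixed multiple of $\nor{\psi}$. Hence $\Phi$ is a bi-bounded isomorphism, which is the claim. The one genuinely non-formal ingredient, and the step I expect to need the most care, is the boundedness of the projections $\pi_i$: it rests on the fact that on the finite dimensional $k$-space $L$ (equivalently on $\bigoplus_i M\ot e_i$) the tensor norm is equivalent to the coordinate max-norm, a standard consequence of the equivalence of norms over a complete valued field, which I would invoke explicitly (e.g. as in \cite{Bosc}).
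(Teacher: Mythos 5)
Your proof is correct, but at the decisive step it takes a genuinely different route from the paper. The paper first reduces by induction to a simple extension $L=k(\alpha)$, constructs the same algebraic inverse $\psi\mapsto\sum_i\varphi_i\ot\alpha^i$ from the decomposition $M\ct_k L\simeq\bigoplus_{i=0}^{n-1}M\ot(\alpha^i\cdot k)$, and then obtains boundedness of that inverse from the \emph{open mapping theorem} when $k$ is non-trivially valued; since the open mapping theorem fails over trivially valued fields, that case is handled separately, using that $L$ is then itself trivially valued, so that $L\simeq\bigoplus_{i=0}^{n-1}k$ isometrically with the max norm and $\max_i\nor{\varphi_i}\leq\nor{\psi}$ can be read off directly. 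You avoid both the induction and the open mapping theorem: boundedness of your inverse $\Psi$ comes from boundedness of the coordinate projections $\pi_i$, which you trace back to equivalence of norms on the finite-dimensional space $L$. This buys a uniform and more quantitative argument (an explicit bound $\nor{\Psi}\leq\max_i\nor{\pi_i}\,|e_i|$, no case split, no Baire-type input), provided you nail down the two points your sketch leaves implicit. First, passing from norm equivalence on $L$ to boundedness of $\pi_i$ on the \emph{infinite-dimensional} space $M\ot_k L$ is not purely formal, and the word ``equivalently'' hides the actual work: given any representation $f=\sum_j m_j'\ot a_j$, expand $a_j=\sum_i c_{ij}e_i$ and regroup to get the basis coordinates $m_i=\sum_j c_{ij}m_j'$, then use $\max_i|c_{ij}|\leq C|a_j|$ (norm equivalence on $L$) to get $\max_i\nor{m_i}\leq C\max_j\nor{m_j'}\,|a_j|$, and take the infimum over representations; this same estimate is also what justifies your identification $M\ct_k L=M\ot_k L$. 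Second, you must make sure the norm-equivalence theorem you cite is valid over a \emph{trivially valued} complete field: standard references (including the one the paper uses) typically assume the valuation non-trivial, and the trivially valued case --- precisely the case that forces the paper's case distinction --- needs its own short argument (an induction on dimension using that a complete, hence closed, subspace stays at positive distance from any vector outside it; or the paper's isometric identification $L\simeq\bigoplus_i k$). With these two points made explicit, your argument is complete and arguably cleaner than the paper's, whereas the paper's version trades the explicit tensor-norm estimate for the open mapping theorem as a black box, at the cost of splitting off the trivially valued case.
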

	
	\begin{proof}
		As $L$ is a sequence of finite intermediary extensions generated by
                one element, by induction we can assume that
                $L=k(\alpha)$, where $\alpha$ is an algebraic element
                over $k$. By Lemma \ref{3} we have a morphism
                $\Lk{M}\ct_k L\to\LL{L}{M\ct_k L}$. As $L=k(\alpha)$,  we have a
                $k$-isomorphism $M\ct_k L\simeq
                \bigoplus_{i=0}^{n-1}M\ot (\alpha^i\cdot k)$. Let $\psi\in
                \LL{L}{M\ct L}$. The restriction ${\psi_{|_{M\ot
                    1}}:M\ot 1 \to M\ct_k L}$ is of the form ${m\ot
                1\mapsto \sum_{i=0}^{n-1}\varphi_i(m) \ot \alpha^i}$,
                where ${\varphi_i\in \cL_k(M)}$. As $\psi$ is
                $L$-linear, it is determined by the
                $\varphi_i$. This gives rise to an inverse $L$-linear
                map ${ \LL{L}{M\ct_k L}\to\Lk{M}\ct_kL} $. In the case
                where $k$ is not trivially valued, by the open mapping
                theorem (see \autocite[Section 2.8 ~Theorem of Banach]{Bosc}) the last
                map is bounded. Otherwise, the extension $L$ is
                trivially valued. Consequently, we have an isometric
                $k$-isomorphisms: ${L\simeq \bigoplus_{i=0}^{n-1}k}$
                equipped with the max norm. Therefore, we have ${\Lk{M}\ct_k L\simeq
                \bigoplus_{i=0}^{n-1}\Lk{M}}$ and ${M\ct_k L\simeq
                \bigoplus_{i=0}^{n-1}M}$ with respect to the max
                norm. Then we have:
                \[ \max_i\nor{\varphi_i}\leq \nor{\psi_{|_{M\ot 1}}} \leq
                  \nor{\psi}.\]
                Hence, we obtain the result.
              
              \end{proof}

              \subsection{Berkovich line}\label{sec:berkovich-line-2}
              In this paper, we will consider
$k$-analytic spaces in the sense of Berkovich (see \cite{Ber}). We denote by $\Ak$ the affine analytic line over the
ground field $k$, with coordinate $T$. We set $k[T]$ to be the ring of
polynomial with coefficients in $k$ and $k(T)$ its fractions
field.\\%

Recall that a point $x\in \Ak$ corresponds to a multiplicative
semi-norm $|.|_x$ on $k[T]$ (i.e. $|0|_x=0$, $|P+Q|_x\leq
\max(|P|_x,|Q|_x)$ and $|P\cdot Q|_x=|P|_x|Q|_x$ for all $P$, $Q\in k[T]$), that its restriction
coincides with the absolute value of $k$. The set $\mfp_x:=\{P\in
k[T]|\; |f|_x=0\}$ is a prime ideal of $k[T]$. Therefore, the
semi-norm extends to a multiplicative norm on the fraction field
Frac($A/\mfp_x)$).
\begin{nota}
  We denote by $\Hx$ the completion of Frac($A/\mfp_x$) with respect
  to $|.|_x$, and by $|.|$ the absolute value on $\Hx$ induced by
  $|.|_x$.
\end{nota}

Let $\Omega\in E(k)$ and $c\in
\Omega$. For $r\in \R+^*$ we set

\[\discf{\Omega}{c}{r}=\{x\in \A{\Omega}| |T(x)-c|\leq r\}\]
and 

\[\disco{\Omega}{c}{r}=\{x\in \A{\Omega}| |T(x)-c|< r\}\]
Denote by $x_{c,r}$ the unique point in the Shilov boundary of
$\discf{\Omega}{c}{r}$.\\
For $r_1$, $r_2\in \R+$, such that $0<r_1\leq r_2$ we set

\[\Couf{\Omega}{c}{r_1}{r_2}=\{x\in \A{\Omega}| r_1\leq |T(x)-c| \leq
  r_2\}\]
and for $r_1<r_2$ we set:

\[\Couo{\Omega}{c}{r_1}{r_2}=\{x\in \A{\Omega}| r_1< |T(x)-c|<r_2\}\]
We may delete the index $\Omega$ when it is obvious from the context. 

\begin{hyp}
  Until the end of this section we will suppose that $k$ is
  algebraically closed.
\end{hyp}

Each  affinoid domain $X$ of
$\Ak$ is a finite  union of connected affinoid domain of the form:

\begin{equation}
  \label{eq:2}
   \disf{c_0}{r_0}\setminus\bigcup_{i=1}^\mu\diso{c_i}{r_i}
\end{equation}
Where $c_0,\dots, c_\mu\in \disf{c_0}{r_0}\cap k$ and $0<r_1,\dots, r_\mu\leq
r_0$ (the case where $\mu=0$ is included).

Let $X$ be an affinoid domain of $\Ak$, we denote by $\cO(X)$ the
$k$-Banach algebra of global sections of $X$. For a disk $\disf{c}{r}$
we have:
\[\fdisf{c}{r}=\{\sum_{i\in \NN} a_i(T-c)^i|\; a_i\in k,\; \lim\limits_{i\to+\infty}|a_i|r^i= 0\}\]
equipped with the multiplicative norm:
\[\nor{\sum_{i\in \NN}a_i(T-c)^i}=\max_{n\in \NN}|a_i|r^i\]
More generally, if $X$ is of the form \eqref{eq:2}, then by the Mittag-Leffler
decomposition \cite[Proposition~2.2.6]{Van}, we have:
\[\cO(X)=\bigoplus\limits_{i=1}^\mu\{\sum_{j\in \NN^*}
  \dfrac{a_{ij}}{(T-c_i)^j}|\; a_{ij}\in k,\; \lim\limits_{j\to+\infty}|a_{ij}|r_i^{-j}= 0 \}\oplus\fdisf{c_0}{r_0}.\]
where
$\nor{\sum_{j\in\NN^*}\frac{a_{ij}}{(T-c_i)^j}}=\max_j|a_{ij}|r_i^{-j}$ and
the sum above is equipped with the maximum norm.

For $\Omega\in E(k)$, we
set $X_\Omega=X\ct_k \Omega$. We have a canonical projection of analytic
spaces:
\begin{equation}
  \label{eq:6}
  \pi_{\Omega/k}: X_\Omega\to X
\end{equation}

\begin{defi}\label{sec:berkovich-line-3}
  Let $x\in\Ak$. we define the radius of $x$ to be the value:
  \[r_k(x)=\inf\limits_{a\in k}|T(x)-a|.\]
  We may delete $k$ if it is obvious from the context.
\end{defi}

We can describe the
field $\Hx$, where $x$ is a point of  $\Ak$, in a more
explicit way. In the case where $x$ is of type (1), we have
$\Hx=k$. If $x$ is of type (3) of the form $x=x_{c,r}$ where
$c\in k$ and $r\notin |k|$, then it is easy to see that $\Hx=\fcouf{c}{r}{r}$. But for the points
of type (2) and (4), a description is not obvious, we have the following
Propositions:

\begin{pro}[Mittag-Leffler Decomposition]\label{15}
  Let
                $x=x_{c,r}$ be a point of type (2) of $\Ak$ ($c\in k$
                and $r\in |k^*|$). We have the decomposition: \[\Hx=E\oplus \fdisf{c}{r}  \]
		where $E$ is the closure  in $\Hx$ of the ring of rational fractions of $k(T-c)$ whose poles are in $D^+(c,r)$.
		i.e. for $\gamma\in k$ with $|\gamma|=r$:
		\[E:=\widehat{\bigoplus}_{\alpha\in
                    \tilde{k}}\{\sum_{i\in\NN^*} \frac{a_{\alpha
                      i}}{(T-c+\gamma\alpha)^i}|\; a_{\alpha i}\in
                  k,\; \lim\limits_{i\to +\infty}|a_{\alpha i}|r^{-i}= 0  \}.\] 
		
	\end{pro}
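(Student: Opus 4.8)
The plan is to reduce to the Gauss point of the unit disk and then decompose the completion of $k(T)$ according to partial fractions. First I would compose with the isometric automorphisms $T\mapsto T-c$ and $T\mapsto T/\gamma$ of $\Ak$, the latter using a $\gamma\in k$ with $|\gamma|=r$ (which exists since $r\in|k^*|$), so that we may assume $c=0$ and $r=1$. Then $x$ is the Gauss point, $\Hx$ is the completion of $k(T)$ for the Gauss norm $\|\sum a_iT^i\|=\max_i|a_i|$, and its residue field is $\tilde k(t)$, where $t$ denotes the image of $T$. Note that $\fdisf{0}{1}$ is the closure of $k[T]$ in $\Hx$, hence a closed subspace, and $E$ is closed by construction. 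Since $\Hx$ is by definition the completion of $k(T)$, it suffices to decompose rational functions and pass to the limit.

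Next I would show that $E+\fdisf{0}{1}$ is dense by showing it contains $k(T)$. As $k$ is algebraically closed, every $f\in k(T)$ has a partial fraction decomposition $f=P(T)+\sum_b\sum_{i\ge 1}\frac{a_{b,i}}{(T-b)^i}$ with $P\in k[T]$ and finitely many poles $b\in k$. The polynomial part lies in $\fdisf{0}{1}$, and so does each term with a pole satisfying $|b|>1$, via the convergent geometric expansion $\frac1{T-b}=-\frac1b\sum_{n\ge 0}(T/b)^n$. For a pole with $|b|\le 1$, let $\alpha\in\tilde k$ be its residue class and $\beta_\alpha$ the chosen representative; since $|T-\beta_\alpha|_x=1>|b-\beta_\alpha|$, the series $\frac1{T-b}=\sum_{n\ge 0}\frac{(b-\beta_\alpha)^n}{(T-\beta_\alpha)^{n+1}}$ converges in $\Hx$, and binomial expansion of $\frac1{(T-b)^i}=\frac1{((T-\beta_\alpha)-(b-\beta_\alpha))^i}$ handles higher orders, exhibiting each such term in the $\alpha$-block of $E$. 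Hence $k(T)\subset E+\fdisf{0}{1}$, and density follows.

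The crux is the orthogonality of the two summands: for $e\in E$ and $f\in\fdisf{0}{1}$ I must prove $\|e+f\|_x=\max(\|e\|_x,\|f\|_x)$, which simultaneously yields $E\cap\fdisf{0}{1}=0$ and, since both summands are complete, that their sum, being isometric to the max-normed product, is closed in $\Hx$. It suffices to take $\|e\|_x,\|f\|_x\le 1$ and rule out $\|e+f\|_x<1$ when $\max(\|e\|_x,\|f\|_x)=1$. For this I would reduce modulo the maximal ideal of $\Hx^{\circ}$: the elements $\frac1{(T-\beta_\alpha)^i}$ have norm $1$ and reduce to $\frac1{(t-\bar\beta_\alpha)^i}$ with the $\bar\beta_\alpha\in\tilde k$ pairwise distinct, while the monomials $T^j$ reduce to $t^j$. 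By the uniqueness of partial fraction decomposition over the field $\tilde k$, these reductions are $\tilde k$-linearly independent in $\tilde k(t)$, so the reduction of $e+f$ cannot vanish unless those of $e$ and $f$ both do; that is, no cancellation of the norm-$1$ parts occurs. The infinitely many residue classes packaged in the completed direct sum defining $E$ are dealt with by approximating $e$ with a finite partial sum, so that only finitely many $\alpha$ intervene at each stage.

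Combining these, $E+\fdisf{0}{1}$ is a dense closed subspace of $\Hx$ on which the norm is the maximum of the two component norms, whence $\Hx=E\oplus\fdisf{0}{1}$ with the stated description of $E$. I expect the orthogonality step to be the main obstacle: the passage to the residue field $\tilde k(t)$ must be combined with careful bookkeeping of the completed direct sum over the residue classes $\alpha\in\tilde k$, and one must verify that the reduction argument survives the limiting process.
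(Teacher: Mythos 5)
Your proof is correct, but it follows a genuinely different route from the paper: the paper does not actually prove this proposition in the interesting case, it cites Christol (\cite[Theorem~2.1.6]{Chr}) when $k$ is non-trivially valued, and handles the trivially valued case by remarking that the only type (2) point of $\Ak$ is the Gauss point $x_{0,1}$, for which $\Hx=k(T)$ and the decomposition is the classical partial-fraction decomposition. Your argument --- reduce to the Gauss point by the isometries $T\mapsto T-c$ and $T\mapsto T/\gamma$; obtain density of $E+\fdisf{c}{r}$ from partial fractions over the algebraically closed field $k$ (geometric expansion inside $\fdisf{c}{r}$ for poles outside the disk, expansion around the representative $\beta_\alpha$ for poles inside); obtain the isometric splitting by reducing modulo $\Hx^{\circ\circ}$ and invoking uniqueness of partial fractions in $\widetilde{\mathscr{H}(x)}=\tilde{k}(t)$ --- is a self-contained substitute for that citation, and it has the added benefit of treating the trivially and non-trivially valued cases uniformly, with no case split. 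Two points are worth making explicit in a write-up, both of which your sketch only gestures at: (i) the normalization $\max(\nor{e},\nor{f})=1$ requires this maximum to lie in $|k^*|$, which holds because the finiteness conditions in the completed direct sum (respectively the condition $|a_j|\to 0$ in $\fdisf{c}{r}$) force the supremum defining $\nor{e}$ (respectively $\nor{f}$) to be attained; (ii) the same finiteness conditions guarantee that, after splitting off the finitely many terms of norm exactly $1$, the remaining tail has norm strictly less than $1$, so the reduction of $e$ is honestly a finite $\tilde{k}$-linear combination of principal parts and the linear-independence argument applies. With those two remarks added, your argument is complete and could replace the paper's external reference.
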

	
	\begin{proof}
		In the case where $k$ is not trivially valued we refer
                to \cite[Theorem~2.1.6]{Chr}. Otherwise, the only point
                of type (2) of $\Ak$ is $x_{0,1}$, which corresponds
                to the trivial norm on $k[T]$. Therefore, we have $\Hx=k(T)$. 
	\end{proof}

        \begin{lem}\label{sec:berkovich-line-1}
         
          Let $x\in\Ak$ be a
          point of type (4). The field $\Hx$ is the completion of
          $k[T]$ with respect to the norm $|.|_x$.
        \end{lem}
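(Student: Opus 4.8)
The plan is to reduce the statement to a density claim and then settle it by an explicit geometric expansion. First I would record that, since $k$ is algebraically closed and $x$ is of type (4), the semi-norm $|.|_x$ is in fact a genuine \emph{norm} on $k[T]$: writing a nonzero $P$ as $c\prod_i(T-a_i)$ and using $|T-a|_x\geq r(x)>0$ for every $a\in k$ (the radius of a type (4) point being strictly positive), one gets $|P|_x>0$, so $\mfp_x=\{0\}$. Consequently $\operatorname{Frac}(k[T]/\mfp_x)=k(T)$ and, by definition, $\Hx$ is the completion $\widehat{k(T)}$ of $k(T)$ for $|.|_x$. Let $B$ be the closure of $k[T]$ inside $\Hx$; it is a closed $k$-subalgebra, and the lemma is exactly the equality $B=\Hx$, i.e. the density of $k[T]$ in $k(T)$. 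Since every element of $k(T)$ is, by partial fractions (here $k$ algebraically closed is used), a $k$-linear combination of a polynomial and of powers $(T-a)^{-j}$ with $a\in k$, and $B$ is a ring, it suffices to prove $\tfrac{1}{T-a}\in B$ for each $a\in k$.

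To this end I would use the description of $x$ by a nested sequence of closed disks $\disf{c_n}{r_n}$ with $c_n\in k$, radii $r_n$ decreasing to $r(x)=\inf_n r_n>0$, empty intersection, and associated norm $|P|_x=\lim_n |P|_{x_{c_n,r_n}}$. Fix $a\in k$. Since $x$ is of type (4), no $a\in k$ lies in all the disks, so there is an index $N$ with $a\notin\disf{c_N}{r_N}$, i.e. $|a-c_N|>r_N$. For $n\geq N$, nestedness gives $|c_n-c_N|\leq r_N<|a-c_N|$, whence by the ultrametric inequality
\[
|a-c_n|=|a-c_N|=:\delta_a>r_N\geq r(x).
\]
On the other hand, for $m\geq n$ one has $\disf{c_m}{r_m}\subseteq\disf{c_n}{r_n}$, so $|T-c_n|_{x_{c_m,r_m}}=\max(|c_m-c_n|,r_m)\leq r_n$; letting $m\to\infty$ yields $|T-c_n|_x\leq r_n\leq r_N<\delta_a$.

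With these estimates, I would expand, for a fixed $n\geq N$,
\[
\frac{1}{T-a}=\frac{-1}{a-c_n}\cdot\frac{1}{1-\frac{T-c_n}{a-c_n}}=-\sum_{j\geq 0}\frac{(T-c_n)^{j}}{(a-c_n)^{\,j+1}}.
\]
Because $\bigl|\tfrac{T-c_n}{a-c_n}\bigr|_x=\tfrac{|T-c_n|_x}{\delta_a}\leq\tfrac{r_n}{\delta_a}<1$, this series converges in the complete valued field $\Hx$; its partial sums are polynomials in $T$, so the limit $\tfrac{1}{T-a}$ lies in $B$. Thus $\tfrac{1}{T-a}\in B$ for every $a\in k$, hence $k(T)\subseteq B$, and since $k(T)$ is dense in $\Hx$ while $B$ is closed, $B=\Hx$, as claimed.

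The only genuinely delicate point is the middle step: extracting from the bare nested-disk data the two inequalities $|a-c_n|=\delta_a>r(x)$ and $|T-c_n|_x\leq r_n$, and in particular justifying $r(x)>0$, which holds because a nested sequence of disks with radii tending to $0$ would, by completeness of $k$, have nonempty intersection, contradicting that $x$ is of type (4). Everything else---the partial-fraction reduction and the convergence of the geometric series---is routine once these estimates are in place.
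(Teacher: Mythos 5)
Your proof is correct and follows essentially the same route as the paper's: both reduce the lemma to the density of $k[T]$ in $k(T)$, then exhibit $\frac{1}{T-a}$ as a limit of polynomials via the geometric series expansion around a center $c\in k$ with $|T-c|_x<|T-a|_x$. The only difference is that the paper invokes the existence of such a $c$ directly from $x$ being of type (4), whereas you derive it (along with $r(x)>0$ and $\mfp_x=\{0\}$) from the nested-disk description---a fuller justification of the same key step.
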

        \begin{proof}
Recall that for a point $x\in \Ak$ of type (4), the field $\Hx$ is the
completion of $k(T)$ with respect to $|.|_x$. To prove that $\Hx$ is
the completion of $k[T]$, it is enough to show that $k[T]$ is dense in
$k(T)$ with respect to $|.|_x$.
For all $a\in k$ it is then enough to show that  there exists a sequence $(P_i)_{i\in\NN}\subset k[T]$
which converges to $\frac{1}{T-a}$.
Let  $a\in k$. Since $x$ is of type (4),  there exists $c\in k$ such
that $|T-c|_x<|T-a|_x$. Therefore we have $|c-a|_x=|T-a|_x$ and we obtain:   

\[ \frac{1}{T-a}=\frac{1}{(T-c)+(c-a)}=\frac{1}{c-a}\sum_{i\in
    \NN} \frac{(T-c)^i}{(a-c)^i}.\]
So we conclude. 
        \end{proof}
        
        \begin{pro}[\cite{Chr}]\label{14}
          Let $x\in \Ak$
          be a point of type (4). Then there exists an isometric
          isomorphism $\psi:\fdiscf{\Hx}{T(x)}{r_k(x)}\to \Hx\ct_k\Hx$ of $\Hx$-Banach algebras.
        \end{pro}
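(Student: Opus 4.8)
The plan is to build $\psi$ explicitly and reduce the whole statement to a single norm computation. Set $r:=r_k(x)$ and $S:=1\ot T-T\ot 1\in\Hx\ct_k\Hx$, where the \emph{first} tensor factor plays the role of the base field. By Lemma~\ref{sec:berkovich-line-1} the second factor is the completion of $k[T]$, so $\Hx\ot_k k[T]$ is dense in $\Hx\ct_k\Hx$; since $T\ot 1=T(x)\ot 1$ is a base-field scalar, one has $\Hx\ot_k k[T]=\Hx[S]$, and hence $\Hx\ct_k\Hx$ is the completion of the polynomial ring $\Hx[S]$ for the tensor norm. I would then let $\psi$ be the $\Hx$-linear continuous map determined by sending the coordinate $T$ of $\A{\Hx}$ to $1\ot T$, that is $T-T(x)\mapsto S$. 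Because $\fdiscf{\Hx}{T(x)}{r}$ is, by the explicit description of disk algebras recalled above, the completion of $\Hx[T-T(x)]=\Hx[S]$ for the Gauss norm $\nor{\sum_i a_i(T-T(x))^i}=\max_i|a_i|r^i$, the statement reduces to proving that the tensor norm on $\Hx[S]$ equals this Gauss norm.

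For the upper bound, fix $a\in k$ and write $S=1\ot(T-a)-(T-a)\ot 1$. By Lemma~\ref{1} one has $\nor{1\ot(T-a)}=\nor{(T-a)\ot 1}=|T(x)-a|$, so $\nor{S}\le|T(x)-a|$; taking the infimum over $a\in k$ gives $\nor{S}\le r$. Using submultiplicativity together with $\nor{a_i\ot 1}=|a_i|$ (again Lemma~\ref{1}) yields $\nor{\sum_i a_i S^i}\le\max_i|a_i|r^i$.

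The lower bound is the crux. Let $y:=x_{T(x),r}\in\A{\Hx}$ be the Gauss point of $\discf{\Hx}{T(x)}{r}$. I would first check that the map $k[T]\to\Hy$ sending $T$ to the coordinate of $\A{\Hx}$ is isometric for $|.|_x$ and $|.|_y$. Since $k$ is algebraically closed, factor $P=\lambda\prod_j(T-\alpha_j)$ with $\alpha_j\in k$; then $|P|_x=|\lambda|\prod_j|T(x)-\alpha_j|$, whereas by multiplicativity of the Gauss norm $|P|_y=|\lambda|\prod_j\max(|T(x)-\alpha_j|,r)$. As $x$ is of type $(4)$ we have $|T(x)-\alpha_j|\ge\inf_{a\in k}|T(x)-a|=r$, so the two expressions agree. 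Thus $k[T]\to\Hy$ is isometric and extends to an isometric embedding $\Hx=\widehat{k[T]}\hookrightarrow\Hy$; combined with the structural base-field inclusion $\Hx\to\Hy$ this produces a contracting $k$-algebra homomorphism $\operatorname{ev}_y:\Hx\ct_k\Hx\to\Hy$ with $\operatorname{ev}_y(S)=T-T(x)$. Consequently
\[
\max_i|a_i|r^i=\Big|\sum_i a_i(T-T(x))^i\Big|_y=\big|\operatorname{ev}_y(\textstyle\sum_i a_i S^i)\big|\le\nor{\sum_i a_i S^i},
\]
which is the desired lower bound. Together with the upper bound this shows the tensor norm on $\Hx[S]$ is the Gauss norm, and passing to completions makes $\psi$ an isometric isomorphism of $\Hx$-Banach algebras. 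The main obstacle is exactly this lower bound: producing the point $y$ and verifying the isometry $k[T]\hookrightarrow\Hy$, where the type~$(4)$ hypothesis is used through the inequality $|T(x)-a|\ge r_k(x)$ for all $a\in k$.
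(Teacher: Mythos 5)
Your proof is correct, and although it rests on the same two estimates as the paper's own argument, it organizes them along a genuinely different route. The paper proves Proposition~\ref{14} by exhibiting two explicit, mutually inverse, contracting morphisms of $\Hx$-Banach algebras: the map $\psi$ sending $T-T(x)$ to $T(x)\ot 1-1\ot T(x)$, contracting by exactly your upper-bound computation $\nor{S}\leq r_k(x)$, and an inverse $\varphi$ obtained by embedding $k(T)$ isometrically into $\fdiscf{\Hx}{T(x)}{r_k(x)}$ --- there the type (4) hypothesis enters through the \emph{strict} inequality $|T(x)-a|>r_k(x)$, which makes each $T-a$ invertible in the disk algebra --- then extending to $\Hx\ct_k\Hx$ by the universal property and checking $\varphi\circ\psi=\mathrm{Id}$ and $\psi\circ\varphi=\mathrm{Id}$ on topological generators. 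You instead reduce everything to a single norm identity on the dense polynomial subalgebra $\Hx[S]$ and prove it by a two-sided estimate: your lower bound replaces the paper's $\varphi$ by the evaluation map at the Gauss point $y=x_{T(x),r_k(x)}\in\A{\Hx}$, built from the isometric embedding $k[T]\hookrightarrow\Hy$, which needs only the non-strict inequality $|T(x)-a|\geq r_k(x)$ together with Lemma~\ref{sec:berkovich-line-1} (completing $k[T]$ already yields $\Hx$ at a type (4) point, which is also what legitimizes your density claim for $\Hx\ot_k k[T]$). The two routes are equivalent in substance --- your $\operatorname{ev}_y$ is essentially the paper's $\varphi$ followed by the isometric inclusion of the disk algebra into $\Hy$ --- but yours dispenses with the topological-generator verification, since the canonical identification of completions of a normed algebra does that work, and it isolates cleanly where type (4) and the standing hypothesis that $k$ is algebraically closed are used (the latter in both proofs in the same way: passing from linear factors to all polynomials, respectively rational functions, by multiplicativity). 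What the paper's packaging buys in exchange is the explicit inverse morphism $\varphi$ itself, rather than an abstract identification of completions.
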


        \begin{proof}
          Note that, for any element $f\in \Hx\ct_k\Hx$ with $|f|\leq
          r_k(x)$, we can define a morphism of $\Hx$-Banach algebra
           $\psi:\fdiscf{\Hx}{T(x)}{r_k(x)}\to \Hx\ct_k \Hx$, that
           associates $f$ to $T-T(x)$. To prove the statment we choose
          $f=T(x)\ot 1-1\ot T(x)$. Indeed, for all $a\in k$  we have
          $T(x)\ot 1-1\ot T(x)=(T(x)-a)\ot 1- 1\ot(T(x)-a)$. Hence,
          \[|T(x)\ot 1- 1\ot T(x)|\leq \inf_{a\in k}(\max(|(T(x)-a)|,|(T(x)-a)|)=\inf_{a\in k}|T(x)-a|=r_k(x)\] 
        By construction $\psi$ is a contracting $\Hx$-linear
        map. In order to prove that it is an isometric isomorphism, we need to
        construct its inverse map and show that it is also a
        contracting map. For all $a\in k$, $|T(x)-a|>r_k(x)$, hence
        $T-a$ is invertible in $\fdiscf{\Hx}{T(x)}{r_k(x)}$. This
        means that $k(T)\subset \fdiscf{\Hx}{T(x)}{r_k(x)}$ as
        $k$-vector space. As for all $a\in k$ we have:
        \[|T-a|=\max(r_k(x),|T(x)-a|)=|T(x)-a|\]
        the restriction of the norm of
        $\fdiscf{\Hx}{T(x)}{r_k(x)}$ to $k(T)$ coincides  with
        $|.|_x$. Consequently, the closure of $k(T)$ in
        $\fdiscf{\Hx}{T(x)}{r_k(x)}$ is exactly $\Hx$, which means that
        we have an isometric embedding $\Hx \hookrightarrow
        \fdiscf{\Hx}{T(x)}{r_k(x)}$ of $k$-algebras which associates
        $T(x)$ to $T$. This map extends uniquely to a contracting morphism of
        $\Hx$-algebras:
\[\xymatrix{\Hx\ar@{^{(}->}[r]\ar[d]& \fdiscf{\Hx}{T(x)}{r_k(x)}\\
    \Hx\ct_k\Hx\ar[ru]_{\exists! \phi}& \\}\]
        Then we have
        $\varphi(T(x)\ot 1-1\ot T(x))=T-
        T(x)$. Since $(T-T(x))$ (resp. $T(x)\ot 1-1\ot T(x)$) is a
        topological generator of the $k$-algebra
        $\discf{\Hx}{T(x)}{r(x)}$ (resp. $\Hx\ot\Hx$) and both of $\varphi\circ\psi$ and $\psi\circ\varphi$ are bounded morphisms of $k$-Banach algebras,
        we have ${\varphi\circ
        \psi=Id_{\fdiscf{\Hx}{T(x)}{r_k(x)}}}$ and ${\psi\circ
        \varphi=Id_{\Hx\ct_k\Hx}}$. Hence, we obtain the result.
    \end{proof}

    For any point $x$ of
    $\Ak$ and any extension $\Omega\in E(k)$ the tensor norm on the
    algebra $\Hx\ct_k \Omega$ is multiplicative (see \cite[Corollary
    3.14.]{poi}). We denote this norm by $\uni{\Omega}(x)$.
    \begin{pro}\label{sec:berkovich-line}
    
      Let $x\in\Ak$ be a
     point of type (i), where $i\in\{2,3, 4\}$. Let $\Omega\in E(k)$
     algebraically closed. If
     $\Omega\notin E(\Hx)$, then $\pik{\Omega}\-1\{x\}=\{\uni{\Omega}(x)\}$. 
   \end{pro}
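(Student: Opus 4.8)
The plan is to identify the fibre with the Berkovich spectrum of a Banach algebra and then to prove that this algebra is a field. First I would invoke the standard base-change description of the projection \eqref{eq:6}: a point of $\pik{\Omega}\-1\{x\}$ is the same datum as a bounded multiplicative seminorm on $\Hx\ct_k\Omega$, so that $\pik{\Omega}\-1\{x\}$ is canonically homeomorphic to $\cM(\Hx\ct_k\Omega)$. Under this identification the point $\uni{\Omega}(x)$ is the one attached to the tensor seminorm: it lies in the fibre because it is multiplicative (it is by definition the norm of $\Hx\ct_k\Omega$, which is multiplicative by \cite[Corollary~3.14]{poi}) and, by Lemma~\ref{1}, its restriction to $\Hx$ is $|.|$, hence its restriction to $k[T]$ is $|.|_x$. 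It therefore suffices to prove that $\cM(\Hx\ct_k\Omega)=\{\uni{\Omega}(x)\}$.

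The elementary mechanism behind this is that a $k$-Banach algebra $B$ whose norm is multiplicative and which is moreover a field has a one-point spectrum. Indeed, let $|.|_y\in\cM(B)$; since $B$ is a field its only prime ideals are $0$ and $B$, and $|1|_y=1$ forces the kernel of $|.|_y$ to be $0$, so $|.|_y$ is an absolute value. For every $b\neq 0$ one then has $|b|_y\,|b\-1|_y=1=\nor{b}\,\nor{b\-1}$ together with $|b|_y\leq\nor{b}$ and $|b\-1|_y\leq\nor{b\-1}$, whence $|b|_y=\nor{b}$. Thus $|.|_y$ coincides with the tensor seminorm, and $\cM(B)$ is reduced to a single point. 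Applying this to $B=\Hx\ct_k\Omega$, the whole statement reduces to showing that $\Hx\ct_k\Omega$ is a field.

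This last point is the heart of the matter, and it is where the hypothesis $\Omega\notin E(\Hx)$ enters; I expect it to be the main obstacle. I would argue according to the type of $x$, using the explicit descriptions recalled above. For $x=x_{c,r}$ of type (3) one has $\Hx=\fcouf{c}{r}{r}$; since $k$ is algebraically closed $|k^*|$ is divisible, so $r^{j}\notin|k^*|$ for all $j\neq 0$ and every nonzero Laurent series attains its norm at a single index. The condition $\Omega\notin E(\Hx)$ is then equivalent to $r\notin|\Omega^*|$: if $r\in|\Omega^*|$, any $u\in\Omega$ with $|u|=r$ yields an isometric embedding $\Hx\hookrightarrow\Omega$ (no cancellation can occur, again by divisibility of $|\Omega^*|$), whereas conversely such an embedding would force an element of norm $r$ in $\Omega$; and when $r\notin|\Omega^*|$ the completed tensor product $\Hx\ct_k\Omega$ is the field of Laurent series $\sum_{n\in\ZZ}a_n(T-c)^n$ with $a_n\in\Omega$ and $|a_n|r^n\to 0$, which is a field for the same leading-term reason, so the type (3) case follows. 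The cases of type (2) and type (4) follow the same scheme but rest on the Mittag-Leffler decomposition of Proposition~\ref{15} and on the descriptions of $\Hx$ in Lemma~\ref{sec:berkovich-line-1} and Proposition~\ref{14}; translating $\Omega\notin E(\Hx)$ into the corresponding statements about residue fields and value groups, and checking that the resulting completed tensor product has no nonzero non-units, is the delicate part of the argument.
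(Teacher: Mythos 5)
Your reduction is correct as far as it goes: the identification $\pik{\Omega}\-1\{x\}\simeq\cM(\Hx\ct_k\Omega)$ is standard, and your argument that a $k$-Banach algebra which is a field with multiplicative norm has one-point spectrum (via $|b|_y|b\-1|_y=1=\nor{b}\nor{b\-1}$) is sound; the type (3) case is also complete. But the proposition is asserted for types (2), (3) \emph{and} (4), and for types (2) and (4) you offer only a declaration of intent: you yourself call the verification that $\Hx\ct_k\Omega$ has no nonzero non-units ``the delicate part of the argument'' and do not carry it out. That verification \emph{is} the proposition in those cases, so this is a genuine gap, not a routine check left to the reader. Moreover, the ingredient you cite for type (4), Proposition~\ref{14}, describes $\Hx\ct_k\Hx$, i.e.\ precisely the excluded case $\Omega=\Hx\in E(\Hx)$, where the completed tensor product is the disk algebra $\fdiscf{\Hx}{T(x)}{r_k(x)}$ --- very far from a field. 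So that citation points in the opposite direction from what you need (it rather explains why the hypothesis $\Omega\notin E(\Hx)$ cannot be dropped), and the Mittag-Leffler decomposition of Proposition~\ref{15} gives no evident handle on invertibility in $\Hx\ct_k\Omega$ for type (2).

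For what it is worth, your strategy can be completed uniformly in the type, without case-by-case descriptions of $\Hx$. Since $k$ is algebraically closed, every $P\in k[T]$ splits into linear factors, and $|.|_x$ is multiplicative on $\Hx$; hence $\Omega\in E(\Hx)$ if and only if there is $u\in\Omega$ with $|u-a|=|T-a|_x$ for all $a\in k$. So $\Omega\notin E(\Hx)$ means: for every $u\in\Omega$ there is $a\in k$ with $|u-a|\neq|T-a|_x$, and then $T\ot1-1\ot u=((T-a)\ot 1)-(1\ot(u-a))$ is invertible in $\Hx\ct_k\Omega$ by a geometric series, whichever of the two terms dominates (using Lemma~\ref{1} and multiplicativity of $|.|_x$ to compute $\nor{1/(T-a)}$). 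Since nonzero elements of $k(T)\ot_k\Omega$ are, up to scalars, products of such linear factors and their inverses, and $k(T)\ot_k\Omega$ is dense in $\Hx\ct_k\Omega$, multiplicativity of the tensor norm (\cite[Corollary~3.14]{poi}) upgrades this to invertibility of every nonzero $f$: pick rational $g$ with $\nor{f-g}<\nor{f}=\nor{g}$ and write $f=g(1+g\-1(f-g))$. Note that this is genuinely different from the paper's proof, which is three lines: by \cite[Theorem~2.2.9]{np2} the set $\pik{\Omega}\-1\{x\}\setminus\{\uni{\Omega}(x)\}$, if nonempty, is a disjoint union of open disks, hence contains type (1) points of $\A{\Omega}$, and any such point yields an isometric embedding $\Hx\hookrightarrow\Omega$, contradicting $\Omega\notin E(\Hx)$. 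The paper's route buys brevity at the cost of quoting a structure theorem; your route, once actually completed, is self-contained Banach-algebra reasoning.
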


   \begin{proof}
     Recall that if $\pik{\Omega}\-1\{x\}\setminus \{\uni{\Omega}(x)\}$
      is not empty, then it is a union of disjoint open disks
     (cf. \cite[Theorem~2.2.9]{np2}). Therefore, it contains points of
     type (1) which implies that $\Omega\in E(\Hx)$. Hence, we obtain
     a contradiction.
   \end{proof}

\subsection{Berkovich's spectral theory} We recall here the definition of the sheaf of analytic functions with value
in a $k$-Banach space over an analytic space and the definition
of the spectrum given by V. Berkovich in \cite{Ber}.
\begin{defi}
  Let $X$ be a $k$-affinoid space and $B$ be a $k$-Banach space. We
  define the sheaf of analytic functions with values in $B$ over $X$ to
  be the sheaf:
  \[\cO_X(B)(U)=\lim_{\substack{\longleftarrow\\V\subset U}} B\ct_k A_V\]
  where $U$ is an open subset of $X$, $V$ an affinoid domain and
  $A_V$ the $k$-affinoid algebra associated to $V$.
\end{defi}

As each $k$-analytic space is obtained by gluing $k$-affinoid spaces
(see \cite{Ber}, \cite{ber2}), we can extend
the definition to $k$-analytic spaces. Let $U$ be an open subset of
$X$. Every element $f\in\cO_X(B)(U)$ induces a fonction:
$f:\;U\to \coprod_{x\in U}B\ct_k\Hx$,  ${x\mapsto f(x)}$,
where $f(x)$
is the image of $f$ by the map $\cO_X(B)(U)\to B\ct_k\Hx$. We will call an
{\em analytic function over $U$ with value in $B$}, any function $\psi:
U\to \coprod_{x\in U}B\ct_k\Hx$ induced by an element $f\in\cO_X(B)(U)$.

\begin{hyp}
  Until the end of the paper, we will assume that all  Banach algebras are
  with unit element.
\end{hyp}
\begin{defi}
  Let $E$ be $k$-Banach algebra and $f\in E$.
  The spectrum of $f$ is the set $\Sigma_{f,k}(E)$ of points $x\in
  \Ak$ such that the element $f\ot 1-1\ot T(x)$ is not invertible in
  the $k$-Banach algebra $E\Ct{k}\Hx$.
  The resolvent of $f$ is the function:

  \[ \begin{array}[c]{rcl}
                R_f:\Ak\setminus \Sigma_{f,k}(E)&\longrightarrow
       &\coprod\limits_{{x\in\Ak\setminus\Sigma_{f,k}(A)} } E\Ct{k}\Hx\\
                x& \longmapsto& (f\ot1-1\ot T(x))^{-1}\\
              \end{array}\]

\end{defi}

\begin{rem}
  If there is no confusion we denote the spectrum of $f$, as an element of $E$,  just by $\Sigma_f$.
  \end{rem}

  \begin{Theo}\label{sec:defin-basic-propr}
	Let $E$ be a Banach $k$-algebra  and $f\in E$. Then:
	\begin{enumerate}
		\item  The spectrum $\Sigma_f$ is a non-empty compact subset of $\Ak$.
		\item The radius of the smallest (inclusion) closed disk with center at zero which contains $\Sigma_f$ is equal to $\nsp{f}$.
		\item The resolvent $R_f$ is an analytic function on $\Pk\setminus \Sigma_f$ which is equal to zero at infinity.	 
	\end{enumerate}
      \end{Theo}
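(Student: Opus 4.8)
The plan is to adapt the classical Gelfand theory of Banach algebras to the Berkovich setting, the only genuinely new ingredients being the handling of the varying complete residue fields $\Hx$ and of $E$-valued analytic functions. First I would treat the behaviour near infinity, which simultaneously yields the upper bound in (2) and the vanishing in (3). For a point $x$ with $|T(x)|>\nsp{f}$ the Neumann series
\[
-\Sum_{n\geq 0} f^n \ot T(x)^{-(n+1)}
\]
converges in $E\Ct{k}\Hx$, since $\nor{f^n}^{1/n}\to \nsp{f}<|T(x)|$, and its sum is a two-sided inverse of $f\ot 1-1\ot T(x)$. Hence every such $x$ lies in the resolvent set, so $\Sigma_f\subseteq\disf{0}{\nsp{f}}$; moreover the same expansion, read in the coordinate $1/T$ at $\infty$, shows that $R_f$ extends analytically to $\infty\in\Pk$ with $R_f(\infty)=0$.

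Next I would prove that the resolvent set $\Ak\setminus\Sigma_f$ is open and that $R_f$ is analytic on it. The map $x\mapsto f\ot 1-1\ot T(x)$ is an $E$-valued analytic function of $x$, and inversion $u\mapsto u^{-1}$ is analytic on the open set of invertible elements of a $k$-Banach algebra, again via a Neumann series: if $u_0$ is invertible and $\nor{u-u_0}<\nor{u_0^{-1}}^{-1}$, then $u^{-1}=\Sum_{n\geq 0}(u_0^{-1}(u_0-u))^n u_0^{-1}$ converges. The delicate point is that these estimates must be made uniform as the base field $\Hx$ varies; I would handle this by fixing a sufficiently large algebraically closed $\Omega\in E(k)$, carrying out the argument inside the single $\Omega$-Banach algebra $E\Ct{k}\Omega$, and then pushing the resulting open condition down along the projection $\pik{\Omega}:\A{\Omega}\to\Ak$. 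Composing the two analytic maps gives the analyticity of $R_f$ on its domain.

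For non-emptiness I would argue by contradiction using a Liouville-type principle. If $\Sigma_f=\emptyset$, then by the previous step $R_f$ is analytic on all of $\Ak$, and by the first step it extends to a global $E$-valued analytic function on the compact space $\Pk$ vanishing at $\infty$. Since $\cO(\Pk)=k$, every global $E$-valued analytic function on $\Pk$ is constant; being $0$ at $\infty$ forces $R_f\equiv 0$, contradicting the fact that each value $R_f(x)$ is invertible, hence nonzero. Thus $\Sigma_f\neq\emptyset$. Together with openness of the resolvent set, $\Sigma_f$ is closed, and being contained in the compact disk $\disf{0}{\nsp{f}}$ it is compact, which finishes (1).

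Finally, for the sharp spectral radius in (2) I would rule out that $\Sigma_f$ lies in a smaller disk. Suppose $\Sigma_f\subseteq\disf{0}{\rho}$ with $\rho<\nsp{f}$. Then $R_f$ is analytic on $\Pk\setminus\disf{0}{\rho}$ and vanishes at $\infty$, so its Laurent expansion in $1/T$ at $\infty$ converges on the whole region $|T|>\rho$; by uniqueness it must coincide with $-\Sum_{n\geq 0}f^n\ot T^{-(n+1)}$, whose radius of convergence in $1/T$ is governed by $\Lim_n\nor{f^n}^{1/n}=\nsp{f}$, so convergence on $|T|>\rho$ forces $\nsp{f}\leq\rho$, a contradiction. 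Hence the smallest closed disk centred at $0$ containing $\Sigma_f$ has radius exactly $\nsp{f}$. I expect the main obstacle to be the analyticity/openness step: making the Neumann-series estimates uniform across the varying fields $\Hx$ and checking that the local constructions glue into a genuine $E$-valued analytic function in the sense of the sheaf $\cO_{\Ak}(E)$, rather than the formal manipulations, is where the non-Archimedean spectral theory really has to be invoked.
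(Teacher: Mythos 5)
A preliminary remark on the comparison itself: the paper does not prove this theorem at all --- its ``proof'' is the citation \cite[Theorem~7.1.2]{Ber} --- so your proposal has to be measured against Berkovich's argument, whose skeleton (the classical Gelfand proof transported to the Berkovich line) your outline does reproduce: Neumann series at infinity giving both $\Sigma_f\subseteq\disf{0}{\nsp{f}}$ and the vanishing of $R_f$ at infinity, openness and analyticity of the resolvent, a Liouville argument for non-emptiness, and uniqueness of the expansion at infinity for the sharpness of the radius. Those parts of your plan are correct as stated: the convergence estimates use $\nor{f^n\ot T(x)^{-(n+1)}}=\nor{f^n}\,|T(x)|^{-(n+1)}$, which is Lemma~\ref{1}, and the Liouville and uniqueness steps for $E$-valued functions follow from the coefficientwise description in Remark~\ref{sec:berk-spectr-theory-2}.

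The genuine gap is in the step you yourself single out, and the fix you propose does not close it. First, working ``inside the single $\Omega$-Banach algebra $E\Ct{k}\Omega$'' only handles the $\Omega$-rational points of $\A{\Omega}$: however large and algebraically closed $\Omega$ is, $\A{\Omega}$ still contains points of type (2), (3) and (4) whose completed residue fields are strictly bigger than $\Omega$, so the varying-field problem you are trying to avoid reappears upstairs unchanged. Second, and more seriously, ``pushing the open condition down along $\pik{\Omega}$'' presupposes the equality $\Sigma_{f\ot 1}(E\Ct{k}\Omega)=\pik{\Omega}^{-1}(\Sigma_f(E))$, i.e.\ that invertibility of $f\ot 1-1\ot T(x')$ in $E\Ct{k}\mathscr{H}(x')$ for a point $x'$ above $x$ \emph{descends} to invertibility of $f\ot 1-1\ot T(x)$ in $E\Ct{k}\Hx$. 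This descent of invertibility along an isometric extension of complete valued fields is exactly the non-trivial content of Berkovich's Lemma~7.1.7 and Proposition~7.1.6 (both invoked elsewhere in this paper); a Neumann-series perturbation only yields the easy direction, namely that invertibility is preserved by bounded morphisms (Lemma~\ref{sec:berk-spectr-theory-1}). Without that descent statement your argument establishes neither the openness of $\Ak\setminus\Sigma_f$ nor that $R_f$ is a genuine section of $\cO_{\Ak}(E)$, and both the compactness claim in (1) and the premise of your Liouville argument rest on precisely those facts. In short, the skeleton is the right one, but the single step where, in your own words, ``the non-Archimedean spectral theory really has to be invoked'' is the step that remains unproved.
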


      \begin{proof} See \cite[Theorem 7.1.2]{Ber}. \end{proof}

      \begin{rem}\label{sec:berk-spectr-theory-2}
        Let $E$ be a $k$-Banach algebra  and $r\in\R+^*$. Note that we have
        \[\cO_{\Ak}(E)(\disf{a}{r})=E\ct_k\fdisf{a}{r},\] i.e any
        element of $\cO_{\Ak}(E)(\disf{a}{r})$ has the form
        $\sum_{i\in \NN}f_i\ot(T-a)^i$ with $f_i\in E$. Let
        $\phi=\sum_{i\in\NN}f_i\ot (T-a)^i$ be an element of $\cO_{\Ak}(E)(\disf{a}{r})$. Since $\nor{f_i\ot
          (T-a)^i}=\nor{f_i}\nor{(T-a)}^i$ in
        $\cO_{\Ak}(E)(\disf{a}{r})$ (cf. Proposition~\ref{2}), the
        radius of convergence of $\phi$ with respect to $T-a$ is equal
        to $\liminf\limits_{i\to +\infty}\nor{f_i}^{-\frac{1}{i}}$.
      \end{rem}
      
      \begin{lem}\label{sec:defin-basic-propr-1}
       We maintain the same assumption as in
       Theorem~\ref{sec:defin-basic-propr}. If $a\in (\Ak\setminus \Sigma_f)\cap k$, then the biggest open
       disk centred in $a$ contained in $\Ak\setminus\Sigma_f$ has radius $R=\nsp{(f-a)\-1}\-1$.
     \end{lem}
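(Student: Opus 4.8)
The plan is to establish the two inclusions separately: first exhibit an open disk of radius $R:=\nsp{(f-a)\-1}\-1$ centred at $a$ inside the complement of the spectrum, and then show this disk cannot be enlarged. Since $a\in k$ is a point of type $(1)$ we have $\mathscr{H}(a)=k$, so $a\notin\Sigma_f$ means exactly that $f-a$ is invertible in $E$; set $g:=(f-a)\-1$. From $1=\nor{g^ng^{-n}}\leq\nor{g^n}\nor{g^{-1}}^n$ we get $\nor{g^n}^{1/n}\geq\nor{g^{-1}}\-1>0$, so $\nsp{g}>0$ and $R$ is a well-defined positive real (and finite, as $\nsp{g}\leq\nor{g}$).

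First I would prove $\diso{a}{R}\subseteq\Ak\setminus\Sigma_f$. For $x\in\Ak$ one has, in $E\Ct{k}\Hx$, the factorisation $f\ot 1-1\ot T(x)=\big((f-a)\ot 1\big)\big(1\ot 1-u\big)$ with $u:=g\ot(T(x)-a)$, where the term $-a\ot 1+1\ot a$ vanishes because $a$ is a scalar. Applying Lemma~\ref{1} to the $k$-Banach space $E$ and $\Omega=\Hx$ gives $\nor{u^n}=\nor{g^n\ot(T(x)-a)^n}=|T(x)-a|^n\nor{g^n}$, hence $\nsp{u}=|T(x)-a|\,\nsp{g}$. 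Thus $\nsp{u}<1$ exactly when $|T(x)-a|<R$, and in that range the geometric series $\sum_n u^n$ converges, so $1\ot 1-u$, and therefore $f\ot 1-1\ot T(x)$, is invertible. This proves the inclusion and moreover identifies the resolvent on $\diso{a}{R}$ with the explicit series $R_f(x)=\sum_n g^{n+1}\ot(T(x)-a)^n$. By Remark~\ref{sec:berk-spectr-theory-2} the radius of convergence of $\sum_n g^{n+1}\ot(T-a)^n$ with respect to $T-a$ equals $\Liminf_n\nor{g^{n+1}}^{-1/n}=\nsp{g}\-1=R$, since $\nor{g^{n+1}}^{1/(n+1)}\to\nsp{g}$ forces $\nor{g^{n+1}}^{-1/n}\to\nsp{g}\-1$.

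For maximality I would argue by contradiction, assuming $\diso{a}{\rho}\subseteq\Ak\setminus\Sigma_f$ for some $\rho>R$. By Theorem~\ref{sec:defin-basic-propr}(3) the resolvent $R_f$ is analytic on $\Pk\setminus\Sigma_f\supseteq\diso{a}{\rho}$; restricting to each closed subdisk $\disf{a}{r}$ with $r<\rho$ it lies in $E\ct_k\fdisf{a}{r}$, i.e. is a power series $\sum_n f_n\ot(T-a)^n$ with $\nor{f_n}r^n\to 0$, and these are compatible, so $R_f$ is a single $E$-valued power series of radius of convergence at least $\rho$. On the subdisk $\diso{a}{R}$ it must coincide with the series found above, whence by uniqueness of the coefficients $f_n=g^{n+1}$ for all $n$; but that series has radius of convergence exactly $R<\rho$, a contradiction. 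Hence no open disk of radius $>R$ centred at $a$ avoids $\Sigma_f$, and the biggest such disk is $\diso{a}{R}$, of radius $R=\nsp{(f-a)\-1}\-1$.

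The routine part is the first inclusion, which reduces to a Neumann-series estimate once $\nsp{u}$ is computed through Lemma~\ref{1}. The delicate point will be the maximality: I must ensure that (i) the resolvent over an open disk meeting no spectrum is genuinely a single $E$-valued power series (via the description of $\cO_{\Ak}(E)$ on disks in Remark~\ref{sec:berk-spectr-theory-2} and the analyticity in Theorem~\ref{sec:defin-basic-propr}(3)), (ii) uniqueness of the expansion forces it to be the Neumann series $\sum_n g^{n+1}\ot(T-a)^n$, and (iii) its radius of convergence is \emph{exactly} $R$, not merely $\geq R$. An alternative route to maximality would apply Theorem~\ref{sec:defin-basic-propr}(2) to $g$ together with the spectral mapping under the inversion sending $T(y)$ to $a+1/T(y)$, relating $\Sigma_g$ to $\Sigma_f$; this is conceptually clean but requires carefully tracking the correspondence between the residue fields $\Hy$ and $\Hx$, which is why I prefer the resolvent argument above.
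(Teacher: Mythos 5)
Your proof is correct and follows essentially the same route as the paper's: both expand the resolvent at $a$ as the Neumann series $\sum_{n}(f-a)^{-(n+1)}\ot(T-a)^n$, compute its radius of convergence as $\nsp{(f-a)\-1}\-1$ via Remark~\ref{sec:berk-spectr-theory-2}, and identify this with the radius of the maximal disk using the analyticity of $R_f$ from Theorem~\ref{sec:defin-basic-propr}. Your write-up merely makes explicit the two halves (pointwise convergence of the geometric series in $E\Ct{k}\Hx$ for the inclusion $\diso{a}{R}\subset\Ak\setminus\Sigma_f$, and the uniqueness-of-coefficients contradiction for maximality) that the paper compresses into its final sentence.
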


     \begin{proof}
       Since $\Sigma_f$ is compact and not empty, the biggest disk
       $\diso{a}{R}\subset\Ak\setminus\Sigma_f$ has finite positive
       radius $R$. In the
       neighbourhood of the point $a$, we have:
       \[R_f=(f\ot 1-1\ot T)\-1=((f-a)\ot 1-1\ot
         (T-a))\-1=((f-a)\-1\ot 1)\sum_{i\in\NN}\frac{1}{(f-a)^i}\ot
         (T-a)^i.\]
       On the one hand the radius of convergence of the latter series  with
       respect to $(T-a)$ is equal to $\nsp{(f-a)\-1}\-1$
       (cf. Remark~\ref{sec:berk-spectr-theory-2}). On the other hand, the
       analyticity of $R_f$ on $\diso{a}{R}$ implies that the radius
       of convergence with respect to $T-a$ is equal to $R$. Hence, we
       have $R=\nsp{(f-a)\-1}\-1$.
     \end{proof}

     \begin{pro}\label{sec:berk-spectr-theory-4}
     Let $E$ be a commutative $k$-Banach algebra  element,
     and $f\in E$. The spectrum $\Sigma_f$ of  $f$ coincides with the
     image of the analytic spectrum $\cM(E)$ by the map induced by the
     ring morphism $ k[T]\to E$, $T\mapsto f$.
   \end{pro}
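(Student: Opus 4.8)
The plan is to prove the two inclusions separately. Denote by $\pi\colon\cM(E)\to\Ak$ the map induced by the bounded homomorphism $k[T]\to E$, $T\mapsto f$: a point $y\in\cM(E)$, that is a bounded multiplicative seminorm $|\cdot|_y$ on $E$ with associated character $\chi_y\colon E\to\hh{y}$, is sent to the point $x=\pi(y)$ of $\Ak$ given by $|P|_x=|P(f)|_y$ for $P\in k[T]$. By this very definition the composite $k[T]\to E\to\hh{y}$ realizes an isometric embedding $\iota\colon\hh{x}\ito\hh{y}$ sending $T(x)$ to $\chi_y(f)$.

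First I would show $\pi(\cM(E))\subseteq\Sigma_f$. Given $y$ and $x=\pi(y)$ as above, the pair $(\chi_y,\iota)$ assembles into a bounded $k$-algebra morphism $\Phi\colon E\Ct{k}\hh{x}\to\hh{y}$, $e\ot c\mapsto\chi_y(e)\,\iota(c)$; boundedness is immediate from Lemma~\ref{1} together with the facts that $\chi_y$ is contracting, $\iota$ is isometric and $\hh{y}$ is a valued field. Since $\Phi(f\ot 1-1\ot T(x))=\chi_y(f)-\iota(T(x))=0$ and $\Phi$ is a nonzero morphism into a field, the element $f\ot 1-1\ot T(x)$ cannot be invertible in $E\Ct{k}\hh{x}$, so $x\in\Sigma_f$.

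For the converse $\Sigma_f\subseteq\pi(\cM(E))$, fix $x\in\Sigma_f$, so that $g:=f\ot 1-1\ot T(x)$ is not invertible in the commutative $\hh{x}$-Banach algebra $B:=E\Ct{k}\hh{x}$. I would invoke the ultrametric Gelfand principle that a non-invertible element of a commutative $k$-Banach algebra must vanish at some point of its analytic spectrum: as $gB$ is a proper ideal it lies in a closed maximal ideal $\mathfrak{m}$, and $\cM(B/\mathfrak{m})\neq\emptyset$ by \cite[Theorem~1.2.1]{Ber}, so composing with $B\to B/\mathfrak{m}$ produces $z\in\cM(B)$ with $g(z)=0$, that is $\chi_z(f\ot 1)=\chi_z(1\ot T(x))$ in $\hh{z}$. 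Let $y\in\cM(E)$ be the restriction of $z$ along $E\to B$, $e\mapsto e\ot 1$. Because $B$ is an $\hh{x}$-Banach algebra and the analytic spectrum of a valued field is a single point, the structural map $\hh{x}\to\hh{z}$ is isometric; hence for every $P\in k[T]$ one computes $|P|_{\pi(y)}=|P(f)|_y=|P(\chi_z(1\ot T(x)))|_z=|P(T(x))|=|P|_x$, where the middle equality uses $\chi_z(f\ot1)=\chi_z(1\ot T(x))$ and multiplicativity. Thus $\pi(y)=x$.

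The main obstacle is this second inclusion, concentrated in the Gelfand-type statement that invertibility in a commutative $k$-Banach algebra is detected pointwise on $\cM$ (which rests on non-emptiness of the analytic spectrum and on closedness of maximal ideals), and in checking that the witness $z$ restricts to the prescribed point $x$ on the $\hh{x}$-factor, which is exactly what the isometry $\hh{x}\to\hh{z}$ guarantees. The first inclusion, by contrast, is a soft functoriality argument.
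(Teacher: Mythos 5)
Your proof is correct, but there is nothing in the paper to compare it against line by line: the paper's ``proof'' of this proposition is the single sentence ``See \cite[Proposition~7.1.4]{Ber}.'' What you have written is therefore, in effect, a self-contained reconstruction of Berkovich's argument, and it is sound. Your first inclusion is the standard soft one: the character $\chi_y$ and the induced isometric embedding $\Hx\ito \hh{y}$ assemble into a bounded $k$-algebra morphism $E\ct_k\Hx\to\hh{y}$ annihilating $f\ot 1-1\ot T(x)$, which therefore cannot be a unit. Your second inclusion rederives, via closed maximal ideals and non-emptiness of the analytic spectrum \cite[Theorem~1.2.1]{Ber}, precisely the statement Berkovich isolates as a corollary of that theorem (a non-invertible element of a commutative Banach ring vanishes at some point of $\cM$); Berkovich's proof of Proposition~7.1.4 invokes that corollary directly, so you have simply inlined its proof. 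The delicate point --- that the witness $z\in\cM(E\ct_k\Hx)$ restricts to the prescribed point $x$ --- is handled correctly by your observation that a bounded multiplicative seminorm on a valued field must coincide with its absolute value (apply boundedness to $c$ and to $c^{-1}$), so the structural map $\Hx\to\hh{z}$ is isometric and the chain $|P|_{\pi(y)}=|P(T(x))|=|P|_x$ goes through; note only that in that chain one first moves $P$ inside $\chi_z$ by multiplicativity, obtaining $\chi_z(1\ot P(T(x)))$, and then applies the isometry. One cosmetic remark: the boundedness of your morphism $\Phi$ does not really need Lemma~\ref{1}; it follows directly from contractivity of $\chi_y$ and $\iota$ against the tensor seminorm of the paper's formula~\eqref{eq:4}.
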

   \begin{proof}
     See \cite[Proposition~7.1.4]{Ber}.
   \end{proof}

   \begin{defi}
     Let $E$ be a $k$-Banach algebra and $B$ a commutative
     $k$-subalgebra of $E$. We say that $B$ is a maximal subalgebra of
     $E$, if for any subalgebra $B'$ of $E$ we have the following
     property:
     \[(B\subset B'\subset E)\Leftrightarrow(B'=B).\] 
   \end{defi}

   \begin{rem}
     A maximal subalgebra $B$ is necessarily closed in $E$, hence a
     $k$-Banach algebra.
   \end{rem}
   
   \begin{pro}\label{sec:berk-spectr-theory-5}
     Let $E$ be a $k$-Banach algebra. For any
     maximal commutative subalgebra $B$ of $E$, we have:
     \[\forall f\in B;\qqq \Sigma_f(B)=\Sigma_f(E).\]
   \end{pro}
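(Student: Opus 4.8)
The plan is to prove the two inclusions separately, the first being immediate and the second resting on a base-changed form of the classical fact that a maximal commutative subalgebra equals its own commutant. First I would record that, $B$ being closed in $E$, the inclusion $B\hookrightarrow E$ induces a contracting (in fact isometric) morphism of $\Hx$-Banach algebras $\iota\colon B\ct_k\Hx\hookrightarrow E\ct_k\Hx$. If $x\notin\Sigma_f(B)$, then $f\ot 1-1\ot T(x)$ admits an inverse in $B\ct_k\Hx$, and $\iota$ of this inverse inverts $f\ot 1-1\ot T(x)$ in $E\ct_k\Hx$ (since $\iota$ fixes that element, $f$ lying in $B$); hence $x\notin\Sigma_f(E)$. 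This gives the easy inclusion $\Sigma_f(E)\subseteq\Sigma_f(B)$.

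For the reverse inclusion, assume $x\notin\Sigma_f(E)$ and let $u=(f\ot 1-1\ot T(x))^{-1}\in E\ct_k\Hx$. For every $b\in B$ the element $b\ot 1$ commutes with $f\ot 1-1\ot T(x)$: indeed $bf=fb$ because $B$ is commutative and $f\in B$, while $1\ot T(x)$ is central in $E\ct_k\Hx$. Since $b\ot 1$ commutes with an invertible element, it commutes with its inverse $u$; and $u$ trivially commutes with $1\ot\Hx$. Thus $u$ lies in the commutant of the subalgebra $B\ct_k\Hx$ inside $E\ct_k\Hx$. It therefore suffices to prove that this commutant is $B\ct_k\Hx$ itself, for then $u\in B\ct_k\Hx$ is the required two-sided inverse and $x\notin\Sigma_f(B)$.

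The heart of the matter is hence to show that the commutant of $B\ot 1$ in $E\ct_k\Hx$ equals $B\ct_k\Hx$, i.e. that maximal commutativity survives the completed base change $-\ct_k\Hx$. I would fix an orthogonal (Schauder) basis $(c_i)_i$ of the $k$-Banach space $\Hx$, so that every element of $E\ct_k\Hx$ has a unique expansion $\Sum_i e_i\ot c_i$ with $e_i\in E$ tending to $0$. Writing $u=\Sum_i e_i\ot c_i$, the relation $u\,(b\ot 1)=(b\ot 1)\,u$ reads $\Sum_i (e_ib-be_i)\ot c_i=0$, and uniqueness of coordinates forces $e_ib=be_i$ for all $i$ and all $b\in B$. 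Each $e_i$ then lies in the commutant of $B$ in $E$; by maximality of $B$ this commutant is $B$ (any $g$ commuting with $B$ generates together with $B$ a commutative subalgebra containing $B$, which by maximality coincides with $B$, so $g\in B$), whence $e_i\in B$ and $u\in B\ct_k\Hx$.

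The main obstacle is precisely this last step: controlling commutants under the completed tensor product $-\ct_k\Hx$. It hinges on the availability of an orthogonal basis of $\Hx$ over $k$ (equivalently, on enough bounded functionals for the slice maps $\Id_E\ct\phi\colon E\ct_k\Hx\to E$ to detect membership in the closed subspace $B$), as well as on the parallel fact that $\iota$ is isometric. In the non-Archimedean setting these points are handled through the structure theory of $k$-Banach spaces of countable type together with the explicit descriptions of $\Hx$ recalled above; one should also check that the commutant is closed (continuity of multiplication) so that the coordinatewise relations indeed pass to the limit element $u$.
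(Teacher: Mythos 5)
Your strategy is the classical one: the easy inclusion by functoriality, and for the reverse inclusion the reduction to showing that the commutant of $B\ot 1$ in $E\ct_k\Hx$ is exactly (the image of) $B\ct_k\Hx$, using that a maximal commutative subalgebra equals its own commutant. The paper offers nothing to compare with here, since its ``proof'' is a citation of \cite[Proposition~7.2.4]{Ber}; and the formal skeleton of your argument (the inverse $u$ commutes with $B\ot1$ and with $1\ot\Hx$; coordinatewise this forces each coordinate into the commutant of $B$ in $E$, which is $B$ by maximality) is correct as far as it goes.

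The genuine gap is the structural input that makes ``coordinates'' meaningful: the existence, for an \emph{arbitrary} point $x\in\Ak$, of an orthogonal (Schauder) basis of $\Hx$ over $k$, equivalently of unique convergent expansions $u=\sum_i e_i\ot c_i$ with $e_i\in E$ in $E\ct_k\Hx$. You justify this by appeal to ``Banach spaces of countable type'', but that theory does not cover the cases at hand. If $x$ is of type (2) and $\tilde k$ is uncountable, then by Mittag--Leffler (Proposition~\ref{15}) $\Hx$ contains an uncountable orthogonal family of nonzero closed subspaces, so it is \emph{not} of countable type (there the ML decomposition itself supplies an orthogonal basis, but only under the standing hypothesis that $k$ is algebraically closed, while the Proposition concerns arbitrary $k$-Banach algebras and arbitrary points). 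Conversely, if $x$ is of type (4) --- such points exist precisely when $k$ is not spherically complete --- then $\Hx$ is a nontrivial immediate extension of $k$ (same value group, same residue field), and such a field admits \emph{no} orthogonal basis at all: a $1$-orthogonal basis could be rescaled to give an isometric $k$-linear isomorphism $\Hx\simeq c_0(I,k)$, and immediacy applied to each standard basis vector forces the coordinates of $1\in\Hx$ to have a strict maximum at \emph{every} index, hence $|I|=1$ and $\Hx=k$, a contradiction. In that case one must instead use that $k[T]$ is dense in $\Hx$ (Lemma~\ref{sec:berkovich-line-1}) to get countable type and then invoke $t$-orthogonal bases with $t<1$, which still yield unique expansions. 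So the fact you need is true, but only after a case analysis by the type of $x$ (and extra work for non-algebraically closed $k$) that you have not carried out. The same missing input is what underwrites your unproved parenthetical claim that $\iota\colon B\ct_k\Hx\to E\ct_k\Hx$ is isometric (or even injective): the completed tensor product of a closed isometric embedding need not be injective in the non-archimedean setting, and without injectivity your conclusion only places the inverse in the closure of $B\ot\Hx$ inside $E\ct_k\Hx$, not in the Banach algebra $B\ct_k\Hx$ itself, which is what the definition of $\Sigma_f(B)$ requires.
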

   \begin{proof}
     See \cite[Proposition~7.2.4]{Ber}.
   \end{proof}
   
     Let $P(T)\in k[T]$, let $E$ be a Banach $k$-algebra and let $f\in
      E$. We set $P(f)$ to be the image of $P(T)$ by the morphism
      $k[T]\to E$, $T\mapsto f$, and $P: \Ak \to \Ak$ to be the
      analytic map associated to $k[T]\to k[T]$, $T\mapsto P(T)$. 

      \begin{lem}\label{sec:berk-spectr-theory}
        Let $P(T)\in k[T]$, let $E$ be a Banach $k$-algebra and let $f\in
      E$. We have the equality of sets:
        \[\Sigma_{P(f)}=P(\Sigma_f)\]
      \end{lem}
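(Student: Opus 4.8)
The plan is to prove the set equality $\Sigma_{P(f)}=P(\Sigma_f)$ by passing through the analytic-spectrum description of the spectrum and exploiting functoriality of the map induced by $T\mapsto P(T)$. The key observation is that both sides should be computable inside a single commutative $k$-Banach algebra, namely a maximal commutative subalgebra $B\subset E$ containing $f$ (and hence $P(f)$). By Proposition~\ref{sec:berk-spectr-theory-5} we have $\Sigma_{f}(B)=\Sigma_f(E)$ and $\Sigma_{P(f)}(B)=\Sigma_{P(f)}(E)$, so it suffices to prove the statement for the commutative algebra $B$. This is the reduction step I would carry out first; it is harmless because every element is contained in some maximal commutative subalgebra.

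Once we are in the commutative setting, I would invoke Proposition~\ref{sec:berk-spectr-theory-4}, which identifies $\Sigma_g(B)$ with the image of the analytic spectrum $\cM(B)$ under the map $\cM(B)\to\Ak$ induced by the ring morphism $k[T]\to B$ sending $T\mapsto g$. Write $\kappa_f:\cM(B)\to\Ak$ for the map associated to $T\mapsto f$ and $\kappa_{P(f)}:\cM(B)\to\Ak$ for the map associated to $T\mapsto P(f)$. The crucial algebraic fact is that the morphism $k[T]\to B$, $T\mapsto P(f)$ factors as $k[T]\xrightarrow{T\mapsto P(T)}k[T]\xrightarrow{T\mapsto f}B$; on the level of analytic maps this says precisely that $\kappa_{P(f)}=P\circ\kappa_f$, where $P:\Ak\to\Ak$ is the analytic map defined just before the statement. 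Establishing this commutativity of the triangle is the technical heart of the argument, and the step I expect to require the most care, since one must check that the analytification functor respects the composition $k[T]\to k[T]\to B$ at the level of the induced maps on Berkovich spectra.

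Granting that identity, the conclusion follows by taking images:
\[
\Sigma_{P(f)}(B)=\kappa_{P(f)}(\cM(B))=(P\circ\kappa_f)(\cM(B))=P\bigl(\kappa_f(\cM(B))\bigr)=P\bigl(\Sigma_f(B)\bigr).
\]
Combining this with the reduction $\Sigma_{P(f)}(E)=\Sigma_{P(f)}(B)$ and $\Sigma_f(E)=\Sigma_f(B)$ from Proposition~\ref{sec:berk-spectr-theory-5} yields the desired equality $\Sigma_{P(f)}=P(\Sigma_f)$ in $\Ak$. The remaining routine point is to confirm that the analytic map $P:\Ak\to\Ak$ really does send the point $\kappa_f(\xi)$ (for $\xi\in\cM(B)$) to $\kappa_{P(f)}(\xi)$, which amounts to unwinding the definition of the induced map on multiplicative seminorms; I would treat this as immediate from the universal property of $k[T]$ as a polynomial ring. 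The main obstacle, to restate it, is purely the functoriality verification $\kappa_{P(f)}=P\circ\kappa_f$; everything else is a direct application of results already available in the excerpt.
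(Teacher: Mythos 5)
Your proposal is correct and follows essentially the same route as the paper's own proof: reduce to a maximal commutative subalgebra $B$ containing $f$ via Proposition~\ref{sec:berk-spectr-theory-5}, identify both spectra as images of $\cM(B)$ via Proposition~\ref{sec:berk-spectr-theory-4}, and conclude from the functoriality identity $\kappa_{P(f)}=P\circ\kappa_f$ coming from the factorization $k[T]\xrightarrow{T\mapsto P(T)}k[T]\xrightarrow{T\mapsto f}B$. The only difference is emphasis: the paper treats the functoriality step as immediate, whereas you flag it as the technical heart, but your justification of it is exactly what is needed.
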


      \begin{proof}
   Let $B$ a maximal commutative $k$-subalgebra of $E$ containing $f$
   (which exists by Zorn's Lemma). Then $B$ contains also $P(f)$. By
   Proposition~\ref{sec:berk-spectr-theory-5} we have
   $\Sigma_f(E)=\Sigma_f(B)$ and $\Sigma_{P(f)}(E)=\Sigma_f(B)$. Let
   $^*f:\cM(B)\to \Ak$ (resp. $^*P(f):\cM(B)\to \Ak$) be the map
   induced by $k[T]\to E$, $T\mapsto f$ (resp. $T\mapsto P(f)$). By
   Proposition~\ref{sec:berk-spectr-theory-4} we have
   $\Sigma_f(B)=^*f(\cM(B))$ and
   $\Sigma_{P(f)}(B)=^*P(f)(\cM(B))$. Since $^*P(f)=P\circ ^*f$, we
   obtain the equality. 
 \end{proof}

 \begin{rem}
   Note that, we
   can imitate the proof provided in \cite[p.2]{bousp} to prove the
   statement of Lemma~\ref{sec:berk-spectr-theory}. 
 \end{rem}

      \begin{lem}\label{sec:berk-spectr-theory-1}
        Let $E$ and $E'$ be two Banach $k$-algebras and $\varphi: E\to
        E'$ be a bounded morphism of $k$-algebras. If $f\in E$ then we have:
        \[\Sigma_{\varphi(f)}(E')\subset \Sigma_f(E).\]
        If moreover $\varphi$ is a bi-bounded isomorphism then we have the
        equality.
      \end{lem}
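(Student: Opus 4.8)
The plan is to transport the defining invertibility condition through a base-changed map $\varphi\ct 1$, using the elementary fact that a unital morphism of Banach algebras preserves invertibility. Everything reduces to the functoriality of the completed tensor product.

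First I would promote $\varphi$ to a morphism on the completed tensor products. For every point $x\in\Ak$, exactly as in the construction of Lemma~\ref{3}, the algebraic map $\varphi\ot 1:E\ot_k\Hx\to E'\ot_k\Hx$, $\sum_i e_i\ot c_i\mapsto\sum_i\varphi(e_i)\ot c_i$, satisfies $\nor{(\varphi\ot 1)(u)}\leq\nor{\varphi}\,\nor{u}$ for the tensor norm, since for any representation $u=\sum_i e_i\ot c_i$ one has $\nor{\sum_i\varphi(e_i)\ot c_i}\leq\max_i\nor{\varphi(e_i)}\,|c_i|\leq\nor{\varphi}\max_i\nor{e_i}\,|c_i|$. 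Hence $\varphi\ot 1$ is bounded and extends by continuity to a bounded map $\varphi\ct 1:E\ct_k\Hx\to E'\ct_k\Hx$. Being the continuous extension of the multiplicative and unital map $\varphi\ot 1$ from a dense subspace, $\varphi\ct 1$ is again a unital morphism of $k$-Banach algebras; in particular it fixes $1\ot T(x)$ and sends $f\ot 1$ to $\varphi(f)\ot 1$, so that $(\varphi\ct 1)(f\ot 1-1\ot T(x))=\varphi(f)\ot 1-1\ot T(x)$.

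Next I would prove the inclusion by contraposition. Let $x\in\Ak\setminus\Sigma_f(E)$, so that $f\ot 1-1\ot T(x)$ admits a two-sided inverse $u$ in $E\ct_k\Hx$. Applying the unital morphism $\varphi\ct 1$ and using its multiplicativity, the element $(\varphi\ct 1)(u)$ is a two-sided inverse of $(\varphi\ct 1)(f\ot 1-1\ot T(x))=\varphi(f)\ot 1-1\ot T(x)$ in $E'\ct_k\Hx$. Thus $x\notin\Sigma_{\varphi(f)}(E')$, which establishes $\Sigma_{\varphi(f)}(E')\subset\Sigma_f(E)$.

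Finally, when $\varphi$ is a bi-bounded isomorphism, its inverse $\varphi^{-1}:E'\to E$ is itself a bounded morphism of $k$-algebras, so the inclusion already proved applies to it as well. Taking the element $\varphi(f)\in E'$ and noting $\varphi^{-1}(\varphi(f))=f$, we obtain $\Sigma_f(E)\subset\Sigma_{\varphi(f)}(E')$, which together with the first inclusion yields the desired equality. I do not anticipate a genuine obstacle here; the only point deserving care is verifying the tensor-norm estimate that makes $\varphi\ot 1$ bounded, so that its extension $\varphi\ct 1$ exists and remains unital and multiplicative — the rest of the argument is purely formal.
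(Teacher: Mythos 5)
Your proof is correct, and it is essentially the paper's argument: the paper disposes of this lemma with the single line ``Consequence of the definition,'' and your writeup is exactly the detailed version of that --- extending $\varphi$ to a bounded unital morphism $\varphi\ct 1$ on $E\ct_k\Hx$, noting it carries $f\ot 1-1\ot T(x)$ to $\varphi(f)\ot 1-1\ot T(x)$ and preserves two-sided inverses, then applying the inclusion to $\varphi^{-1}$ for the bi-bounded case. No gap; the tensor-norm estimate you verify is the only point of substance, and you handle it correctly.
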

      \begin{proof}
       Consequence of the definition.
      \end{proof}

Let $M_1$ and $M_2$ be two $k$-Banach spaces, let $M=M_1\oplus M_2$
endowed with the max norm (i.e.  $\forall m_1\in M_1$ and $\forall
m_2\in M_2$ $\nor{m_1+m_2}=\max(\nor{m_1},\nor{m_2})$ ). We set:
\[\cM(M_1,M_2)=\left\{\begin{pmatrix}L_1&L_2\\L_3&L_4\end{pmatrix}|L_1\in \Lk{M_1}, L_2\in \Lk{M_2,M_1}, L_3\in \Lk{M_1,M_2}, L_4\in \Lk{M_2}\right\}\]
We define the multiplication in $\cM(M_1,M_2)$ as follows: 

	\[\begin{pmatrix}A_1&A_2\\A_3&A_4\end{pmatrix}\begin{pmatrix}B_1&B_2\\B_3&B_4\end{pmatrix}= \begin{pmatrix}A_1B_1+A_2B_3&A_1B_2+A_2B_4\\A_3B_1+A_4B_4&A_3B_2+A_4B_4\end{pmatrix}. \]
        Then $\cM(M_1,M_2)$ endowed with the max norm is a $k$-Banach algebra.
        \begin{lem}\label{21}
          We have a bi-bounded isomorphism of $k$-Banach algebras:
          \[\Lk{M}\simeq \cM(M_1,M_2).\]

        \end{lem}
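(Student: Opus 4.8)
The plan is to realize the isomorphism explicitly through the canonical inclusions and projections attached to the decomposition $M=M_1\oplus M_2$. Denote by $\iota_j:M_j\to M$ the inclusions and by $p_i:M\to M_i$ the projections. Since $M$ carries the max norm, each $\iota_j$ is an isometric embedding and each $p_i$ is contracting; moreover one has the relations $p_i\circ\iota_j=\delta_{ij}\,\Id$ together with the resolution of the identity $\iota_1 p_1+\iota_2 p_2=\Id_M$. First I would define the candidate isomorphism
\[\Phi:\Lk{M}\to\cM(M_1,M_2),\qquad \phi\mapsto\begin{pmatrix}p_1\phi\iota_1 & p_1\phi\iota_2\\ p_2\phi\iota_1 & p_2\phi\iota_2\end{pmatrix},\]
observing that each entry is a composite of bounded maps, hence bounded, so that $\Phi$ is well defined.

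Next I would check that $\Phi$ is a morphism of $k$-algebras. The only nontrivial point is multiplicativity, which follows by inserting the resolution of the identity: for $\phi,\psi\in\Lk{M}$ one computes $p_i(\phi\psi)\iota_j=p_i\phi(\iota_1 p_1+\iota_2 p_2)\psi\iota_j=\sum_{l=1}^2(p_i\phi\iota_l)(p_l\psi\iota_j)$, which is exactly the $(i,j)$ entry of the product $\Phi(\phi)\Phi(\psi)$ for the matrix multiplication fixed before the statement; compatibility with sums, scalars and the unit is immediate. For bijectivity I would exhibit the inverse explicitly: to a matrix with entries $L_1,L_2,L_3,L_4$ associate the operator $\Psi=\iota_1(L_1 p_1+L_2 p_2)+\iota_2(L_3 p_1+L_4 p_2)\in\Lk{M}$. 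Using again $p_i\iota_j=\delta_{ij}\,\Id$ and $\iota_1 p_1+\iota_2 p_2=\Id_M$, a direct check shows that this assignment is a two-sided inverse of $\Phi$.

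Finally I would verify that $\Phi$ is bi-bounded; in fact it is isometric for the max norms on both sides. On the one hand, since $p_i$ is contracting and $\iota_j$ is isometric, each entry satisfies $\nor{p_i\phi\iota_j}\leq\nor{\phi}$, hence $\nor{\Phi(\phi)}=\max_{i,j}\nor{p_i\phi\iota_j}\leq\nor{\phi}$. On the other hand, writing $m=\iota_1 m_1+\iota_2 m_2$ with $\nor{m}=\max(\nor{m_1},\nor{m_2})$ and using $\phi(m)=\iota_1 p_1\phi(m)+\iota_2 p_2\phi(m)$, the ultrametric inequality gives $\nor{\phi(m)}=\max_i\nor{p_i\phi(m)}\leq\big(\max_{i,j}\nor{p_i\phi\iota_j}\big)\nor{m}$, whence $\nor{\phi}\leq\nor{\Phi(\phi)}$. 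Combining the two estimates yields $\nor{\Phi(\phi)}=\nor{\phi}$, so $\Phi$ is an isometric isomorphism, \emph{a fortiori} a bi-bounded one. The argument is entirely formal; the only point requiring care—and the closest thing to an obstacle—is keeping the bookkeeping of inclusions and projections consistent with the chosen matrix-multiplication convention, so that multiplicativity of $\Phi$ genuinely matches block multiplication.
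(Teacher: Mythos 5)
Your proof is correct and follows essentially the same route as the paper: the same matrix-of-blocks map $\phi\mapsto(p_i\phi\iota_j)$ with the same explicit inverse $\iota_1L_1p_1+\iota_1L_2p_2+\iota_2L_3p_1+\iota_2L_4p_2$, and boundedness deduced from boundedness of the inclusions and projections. You go a bit further than the paper (which leaves multiplicativity and the inverse verification as ``easy to show'') by checking the algebra-morphism property via the resolution of the identity and by upgrading bi-boundedness to an isometry, but the underlying argument is identical.
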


        \begin{proof}
          Let $p_j$ be the projection of $M$ onto $M_j$ and $i_j$ be
          the inclusion of $M_j$ into $M$, where $j\in \{1,2\}$. We
          define the following two $k$-linear maps:
          \[\Fonction{\Psi_1:
              \Lk{M}}{\cM(M_1,M_2)}{\phi}{\begin{pmatrix} p_1\varphi
                i_1 & p_1\varphi i_2 \\ p_2\varphi i_1 & p_2\varphi
                i_2 \end{pmatrix}}\]
          \[\Fonction{\Psi_2: \cM(M_1,M_2)}{\Lk{M}}{\begin{pmatrix}L_1&L_2\\L_3&L_4\end{pmatrix}}{i_1L_1p_1+i_1L_2p_2+i_2L_3p_1+i_2L_4p_2}\]
        Since the projections and inclusions are bounded maps, then
        the maps $\Psi_1$ and $\Psi_2$ are bounded too. It is easy to
        show that $\Psi_1\circ \Psi_2=id_{\cM(M_1,M_2)}$ and $\Psi_2\circ\Psi_1=id_{\Lk{M}}$. Hence we have an isomorphism of $k$-Banach spaces.                                
              
      \end{proof}
      
      We will need the following lemma for the computation of the
      spectrum:
      \begin{lem}\label{10}
	Let $M_1$ and $M_2$ be $k$-Banach spaces and let 
        $M=M_1\oplus M_2$ endowed with the max norm. Let $p_1$, $p_2$ be the respective
        projections associated to $M_1$ and $M_2$ and $i_1$, $i_2$ be
        the respective inclusions. Let $\varphi\in \Lk{M}$and set
        $\varphi_1=p_1\varphi i_1 \in \Lk{M_1}$ and
        $\varphi_2=p_2\varphi i_2\in \Lk{M_2}$. If
        $\varphi(M_1)\subset M_1$ , then we have:
        \begin{itemize}
        \item[i)] $\Sigma_{\phi_i}(\Lk{M_i})\subset\Sigma_\phi(\Lk{M})\cup \Sigma_{\phi_j}(\Lk{M_j})$,
          where $i,\,j\in \{1,2\}$ and $i\not=j$.
        \item[ii)] $\Sigma_\varphi(\Lk{M})\subset
          \Sigma_{\varphi_1}(\Lk{M_1})\cup\Sigma_{\varphi_2}(\Lk{M_2})$. Furthermore,
          if $\varphi(M_2)\subset M_2$, then we have the equality.
        \item[iii)] If
          $\Sigma_{\phi_1}(\Lk{M_1})\cap\Sigma_{\phi_2}(\Lk{M_2})=\emptyset$,
          then  $\Sigma_\varphi(\Lk{M})=
          \Sigma_{\varphi_1}(\Lk{M_1})\cup\Sigma_{\varphi_2}(\Lk{M_2})$.
        \end{itemize}
      \end{lem}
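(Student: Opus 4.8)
The plan is to fix a point $x\in\Ak$, set $\Omega=\Hx$ and $R=\Lk{M}\ct_k\Omega$, and to read off the (non\nobreakdash-)invertibility of $f:=\varphi\ot 1-1\ot T(x)$ from a block-triangular decomposition of $R$. By Lemma~\ref{21} there is a bi-bounded isomorphism $\Lk{M}\simeq\cM(M_1,M_2)$, and since $\ct_k\Omega$ commutes with the finite direct sum defining $\cM(M_1,M_2)$, the algebra $R$ is identified with the generalized matrix algebra whose $(i,j)$-entries lie in $R_{ij}:=\Lk{M_j,M_i}\ct_k\Omega$, with blockwise multiplication and unit given by the orthogonal idempotents $e_1=i_1p_1\ot 1$, $e_2=i_2p_2\ot 1$. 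Under this identification $e_iRe_i\simeq\Lk{M_i}\ct_k\Omega$, and the corner $e_ife_i$ corresponds to $\varphi_i\ot1-1\ot T(x)$; hence $x\in\Sigma_{\varphi_i}(\Lk{M_i})$ if and only if $e_ife_i$ fails to be invertible in $e_iRe_i$. The hypothesis $\varphi(M_1)\subset M_1$ forces the lower-left entry $(p_2\varphi i_1)\ot1$ to vanish, so $f$ is block upper-triangular, $f=\begin{pmatrix}A&B\\0&D\end{pmatrix}$, with $A=\varphi_1\ot1-1\ot T(x)$, $D=\varphi_2\ot1-1\ot T(x)$ and $B=(p_1\varphi i_2)\ot1$.

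The heart of the argument is then a purely ring-theoretic computation, carried out inside $R$, of when such a triangular element is invertible. Writing a candidate inverse $Z=\begin{pmatrix}\alpha&\beta\\\gamma&\delta\end{pmatrix}$ and expanding the eight block equations coming from $fZ=Zf=1$, I would establish the following facts: (a) if $A$ and $D$ are both invertible then $f$ is invertible, with $Z=\begin{pmatrix}A^{-1}&-A^{-1}BD^{-1}\\0&D^{-1}\end{pmatrix}$; (b) if $f$ and $D$ are invertible then $D\gamma=0$ forces $\gamma=0$, whence $A\alpha=\alpha A=1$ and $A$ is invertible; (c) symmetrically, if $f$ and $A$ are invertible then $\gamma A=0$ forces $\gamma=0$ and $D$ becomes invertible; (d) if moreover $B=0$ (the block-diagonal case, which is exactly the additional hypothesis $\varphi(M_2)\subset M_2$) then invertibility of $f$ directly yields invertibility of both $A$ and $D$. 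I would stress that, in general, an invertible triangular $f$ only makes $A$ left-invertible and $D$ right-invertible; this genuinely infinite-dimensional phenomenon is precisely why the corner spectra need not be contained in $\Sigma_\varphi$ without an extra assumption, and it accounts for the asymmetry of the three assertions.

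With these facts the statement follows by translating invertibility of corners into membership in the corner spectra and taking contrapositives. For (ii), if $x\notin\Sigma_{\varphi_1}\cup\Sigma_{\varphi_2}$ then $A$ and $D$ are invertible, so by (a) $f$ is invertible and $x\notin\Sigma_\varphi$; this gives $\Sigma_\varphi\subset\Sigma_{\varphi_1}\cup\Sigma_{\varphi_2}$, and when $\varphi(M_2)\subset M_2$ fact (d) upgrades the inclusion to an equality. For (i), fact (b) (resp. (c)) says that if $x\notin\Sigma_\varphi$ and $x\notin\Sigma_{\varphi_j}$ then $x\notin\Sigma_{\varphi_i}$, which is exactly $\Sigma_{\varphi_i}\subset\Sigma_\varphi\cup\Sigma_{\varphi_j}$. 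Finally (iii) is formal: if $\Sigma_{\varphi_1}\cap\Sigma_{\varphi_2}=\emptyset$, then any $x\in\Sigma_{\varphi_1}$ lies outside $\Sigma_{\varphi_2}$, so the inclusion of (i) forces $x\in\Sigma_\varphi$, and symmetrically for $\Sigma_{\varphi_2}$; hence $\Sigma_{\varphi_1}\cup\Sigma_{\varphi_2}\subset\Sigma_\varphi$, which together with (ii) gives the equality.

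The main point requiring care is the identification in the first step: the invertibility defining the Berkovich spectrum is taken in $\Lk{M}\ct_k\Hx$, which is \emph{not} $\LL{\Hx}{M\ct_k\Hx}$, so the whole argument must stay inside the abstract Banach algebra $R$ furnished by Lemma~\ref{21}, never passing to genuine $\Hx$-linear operators; invertibility is preserved because the isomorphism of Lemma~\ref{21} is bi-bounded and $\ct_k\Hx$ is functorial. Once the block structure of $R$ and the identifications $e_iRe_i\simeq\Lk{M_i}\ct_k\Hx$ are in place, the remainder is elementary ring theory, and I expect the only subtlety to be checking that the corner $e_iRe_i$ really is $\Lk{M_i}\ct_k\Hx$ with $e_ife_i$ matching $\varphi_i\ot 1-1\ot T(x)$.
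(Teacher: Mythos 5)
Your proposal is correct and follows essentially the same route as the paper: identify $\Lk{M}\ct_k\Hx$ with the block-matrix algebra having entries in $\Lk{M_j,M_i}\ct_k\Hx$, observe that $\varphi\ot 1-1\ot T(x)$ is block upper-triangular, and extract the three assertions from the elementary ring-theoretic analysis of when a triangular element is invertible (both-diagonal-invertible $\Rightarrow$ invertible; invertible plus one corner invertible forces the off-diagonal term of the inverse to vanish and the other corner to be invertible; the diagonal case when $\varphi(M_2)\subset M_2$). Your facts (a)--(d) and the final deduction of i), ii), iii) match the paper's proof step for step, with your idempotent bookkeeping $e_i=i_ip_i\ot 1$ being a slightly more explicit version of the identification the paper takes from Lemma~\ref{21}.
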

      
\begin{proof}
	By Lemma~\ref{21}, we can represent the elements of $\Lk{M}$ as follows:
	\[\Lk{M}=\left\{\begin{pmatrix}L_1&L_2\\L_3&L_4\end{pmatrix}|L_1\in \Lk{M_1}, L_2\in \Lk{M_2,M_1}, L_3\in \Lk{M_1,M_2}, L_4\in \Lk{M_2}\right\}\] 
	and $\varphi$ has the form $\begin{pmatrix}\varphi_1
          &L\\0&\varphi_2\end{pmatrix}$, where $L\in\Lk{M_2,M_1}$.\\ 
	Let $x\in\Ak$. We have an isomorphism of $k-$Banach algebras:		
	\[\Lk{M}\ct_k\Hx=\left\{\begin{pmatrix}L_1&L_2\\L_3&L_4\end{pmatrix}|\begin{array}{c}L_1\in
                                                                               \Lk{M_1}\ct_k\Hx,
                                                                               L_2\in
                                                                               \Lk{M_2,M_1}\ct_k\Hx,\\
                                                                               L_3\in
                                                                               \Lk{M_1,M_2}\ct_k\Hx,
                                                                               L_4\in
                                                                               \Lk{M_2}\ct_k\Hx\end{array}\right\}\]
	Consequently, 
	\[\varphi\ot1-1\ot T(x)=\begin{pmatrix}\varphi_1\ot1-1\ot T(x)&L\ot1\\0&\varphi_2\ot1-1\ot T(x)\end{pmatrix}.\]

We first prove i). Let $ \begin{pmatrix}
    L_1& C\\ 0& L_2\\
  \end{pmatrix}
$ be an invertible element of $\Lk{M}\ct_k\Hx$. We claim that if, for $i\in\{1,2\}$, $L_i$ is invertible
in $\Lk{M_i}\ct_k\Hx$, then so  is $L_j$, where
$j\not=i$. Indeed, let $\begin{pmatrix}
    L_1'& C'\\ B& L_2'\\
  \end{pmatrix}$ such that we have:

\[\begin{pmatrix}
    L_1& C\\ 0& L_2\\
  \end{pmatrix} \begin{pmatrix}
    L_1'& C'\\ B& L_2'\\
  \end{pmatrix}=\begin{pmatrix} 1&0\\ 0& 1\\ \end{pmatrix};\qq \begin{pmatrix}
    L_1'& C'\\ B& L_2'\\
  \end{pmatrix} \begin{pmatrix}
    L_1& C\\ 0& L_2\\
  \end{pmatrix}=\begin{pmatrix} 1&0\\ 0& 1\\ \end{pmatrix}.\]
Then we obtain:
\[
  \begin{cases}
    L_1L_1'+CB=1\\
L_1C'+CL_2=0\\
L_2B=0\\
L_2L_2'=1\\
  \end{cases}
;\qq
\begin{cases}
  L'_1L_1=1\\ 
L_1'C+C'L_2=0\\
BL_1=0\\
BC+L_2'L_2=1\\
\end{cases}.\]
We deduce that $L_1$ is left invertible and $L_2$ is right
invertible. If $L_1$ is invertible, then $B=0$ which implies that
$L_2$ is left invertible, hence invertible. If $L_2$ is invertible,
then $B=0$ which implies that $L_1$ is right invertible, hence invertible.
Therefore, if $\phi\ot 1-1\ot T(x)$ and
$\phi_i\ot 1-1\ot T(x)$ are invertible where $i\in\{1,2\}$, then
$\phi_j\ot 1-1\ot T(x)$ is invertible for $j\in
\{1,2\}\setminus\{i\}$. We conclude that
$\Sigma_{\phi_j}\subset\Sigma_\phi\cup\Sigma_{\phi_i}$ where $i,\,
j\in\{1,2\}$ and $i\not=j$.

We now prove ii). If $\varphi_1\ot1-1\ot T(x)$ and $\varphi_2\ot1-1\ot T(x)$ are
invertible, then $\varphi\ot1-1\ot T(x)$ is invertible. This proves that $\Sigma_{\varphi}\subset\Sigma_{\varphi_1}\cup \Sigma_{\varphi_2}$.
If $\varphi (M_2)\subset M_2$, then $L=0$ which implies that: if
$\varphi\ot1-1\ot T(x)$ is invertible, then $\varphi_1\ot1-1\ot T(x)$
and $\varphi_2\ot1-1\ot T(x)$ are invertible. Hence we have the
equality.

We now prove iii). If $\Sigma_{\phi_1}\cap\Sigma_{\phi_2}=\emptyset$,
then by above we have $\Sigma_{\phi_1}\subset\Sigma_{\phi}$ and
$\Sigma_{\phi_2}\subset\Sigma_{\phi}$. Therefore, $\Sigma_{\phi_1}\cup\Sigma_{\phi_2}\subset\Sigma_{\phi}$.

\end{proof}

\begin{rem}\label{16}
	Set notations as in Lemma \ref{10}. In
        the proof above, we showed also: if
        $\varphi\ot1-1\ot T(x)$ is invertible, then
        $\varphi_1\ot1-1\ot T(x)$ is left invertible and
        $\varphi_2\ot1-1\ot T(x)$ is right invertible.
\end{rem}

      \section{Differential modules and spectrum}\label{sec:diff-modul-spectr}

      \subsection{Preliminaries}\label{sec:preliminaries}

      Recall that a differential $k$-algebra, denoted by  $(A,d)$, is a commutative
      $k$-algebra $A$ endowed with a $k$-linear derivation $d:A\to
      A$. A differential module $(M,\nabla)$ over $(A,d)$
      is a finite free $A$-module $M$ equipped with a $k$-linear map
      $\nabla:M\to M$, called connection of $M$, satisfying
      $\nabla(fm)=df.m+f.\nabla(m)$ for all $f\in A$ and $m\in M$. If
      we fix a basis of $M$, then we get an isomorphism of
      $A$-modules $M\tilde{\to} A^n$, and the operator $\nabla$ is
      given in this basis by the rule:

      \begin{equation}
        \label{eq:1}
        \nabla \begin{pmatrix} f_1\\ \vdots \\ f_n \end{pmatrix}
		= \begin{pmatrix} df_1\\ \vdots \\ df_n \end{pmatrix}+G 
		\begin{pmatrix} f_1\\ \vdots \\ f_n \end{pmatrix}
      \end{equation}
    where $G\in \cM_n(A)$ is a matrix. Conversely the data of such
    a matrix defines a differential module structure on $A^n$ by
    the
    rule \eqref{eq:1}.

    A morphism between differential modules is a
    $k$-linear map $M\to N$ commuting with connections.
    We set $A\<D\>=\bigoplus\limits_{i\in \NN}A.D^i$ to be the ring of
    differential polynomials equipped with the non-commutative
    multiplication  defined by the rule: $D.f=df+f.D$ for all $f\in A$. Let
    $P(D)=g_0+\dots +g_{\nu-1}D^{\nu-1}+D^{\nu}$ be a monic
    differential polynomial. The quotient $A\<D\>/A\<D\>.P(D)$ is a
    finite free $A$-module of rank $\nu$. Equipped with the
    multiplication by $D$, it is a differential module over
    $(A,d)$. In the basis $\{1, D,\dots, D^{\nu-1}\}$ the
    multiplication by $D$ satisfies:
    \[
      D %
    \left(
     \raisebox{0.5\depth}{%
       \xymatrixcolsep{1ex}%
       \xymatrixrowsep{1ex}%
       \xymatrix{f_1\ar@{.}[ddddd]\\ \\\\ \\ \\f_{\nu}\\}%
       }
   \right)
     =
\left(
     \raisebox{0.5\depth}{%
       \xymatrixcolsep{1ex}%
       \xymatrixrowsep{1ex}%
       \xymatrix{df_1\ar@{.}[ddddd]\\ \\ \\ \\ \\d f_{\nu}\\}%
       }
   \right)
   +
      \left(
     \raisebox{0.5\depth}{%
       \xymatrixcolsep{1ex}%
       \xymatrixrowsep{1ex}%
       \xymatrix{0\ar@{.}[rrr]& & & 0&-g_0\ar@{.}[dddd] \\
        1\ar@{.}[rrrddd]& 0\ar@{.}[rr]\ar@{.}[rrdd]& &0\ar@{.}[dd]& \\
        0\ar@{.}[dd]\ar@{.}[rrdd]& &  &  &\\
        & &  &0& \\
        0\ar@{.}[rr]& &0&1& -g_{\nu-1}\\ }%
        }
       \right)
  \left(
     \raisebox{0.5\depth}{%
       \xymatrixcolsep{1ex}%
       \xymatrixrowsep{1ex}%
       \xymatrix{f_1\ar@{.}[ddddd]\\ \\ \\\\ \\f_{\nu}\\}%
       }
   \right)
  \]
  \begin{Theo}[The cyclic vector theorem]
    Let $(A,d)$ be a $k$-differential field (i.e $A$ is a field),
    with $d\not=0$, and let
    $(M,\nabla)$ be a differential module over $(A,d)$ of rank $n$. Then there
    exists $m\in M$ such that $\{m, \nabla(m),\dots \nabla^{n-1}(m)\}$
    is a basis of $M$. In this case we say that $m$ is cyclic vector.
  \end{Theo}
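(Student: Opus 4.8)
The plan is to prove the cyclic vector theorem by an explicit construction, exhibiting a vector $m$ whose successive images $m,\nabla(m),\dots,\nabla^{n-1}(m)$ are $A$-linearly independent. The standard approach (due to Katz, Deligne, and reproduced in many references) starts from an arbitrary $A$-basis $e_1,\dots,e_n$ of $M$. Since $d\neq 0$ on the field $A$, I can pick an element $t\in A$ with $d(t)\neq 0$; after normalizing one may even assume $d(t)=1$, but this is not essential. The idea is to search for a cyclic vector of the form
\[
m=\sum_{j=1}^n P_j(t)\,e_j,
\]
where the $P_j$ are polynomials in $t$ with coefficients to be chosen, or more robustly to look for a cyclic vector among the finitely many vectors
\[
m_s=\sum_{i=0}^{n-1}\binom{s}{i}(-1)^i\,\nabla^i(e_1)\quad\text{(suitably parametrized)},
\]
and show that for all but finitely many choices of the parameter the candidate works.

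The key computational step is to express the vectors $m,\nabla(m),\dots,\nabla^{n-1}(m)$ in the fixed basis $e_1,\dots,e_n$ and form the $n\times n$ matrix $W$ whose columns are these coordinate vectors. The vector $m$ is cyclic precisely when $\det W\neq 0$. First I would observe that $\det W$, as a function of the parameter(s) entering the construction, is itself a \emph{differential polynomial} expression, and the heart of the argument is a Wronskian-type nonvanishing. Concretely, if one sets $m=\sum_j f_j e_j$ and uses the Leibniz rule $\nabla(f e)=d(f)e+f\nabla(e)$, then each $\nabla^k(m)$ has coordinates obtained by applying $d$ repeatedly together with the matrix $G$ of the connection; the leading (highest-derivative) terms of $\det W$ form a classical Wronskian determinant in the $f_j$ and their $d$-derivatives. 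Because $A$ is a field of characteristic $0$ (as assumed throughout the paper, $\crk$ may be positive but $\operatorname{char}(k)=0$, so $A$ contains $\QQ$) and $d\neq 0$, one can arrange the $f_j$ so that this Wronskian is nonzero.

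The main obstacle is the genuine nonvanishing of $\det W$: one must show that the construction does not accidentally force $\det W=0$ identically. I would handle this by a parameter/genericity argument. Introduce a new differential indeterminate or specialize $f_j=t^{\,j-1}$ (powers of a fixed $t$ with $d(t)\neq 0$) and compute that the top-order part of $\det W$ is, up to a nonzero factor coming from powers of $d(t)$, a Vandermonde-like determinant, hence nonzero. Alternatively, regard the coefficients of the candidate cyclic vector as entries of a matrix over the differential ring and show that $\det W$ is a nonzero differential polynomial in these entries; since $A$ is an infinite field (characteristic $0$) one can then specialize the entries to values in $A$ avoiding the zero locus of this polynomial. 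The characteristic-$0$ hypothesis on $A$ is exactly what prevents the binomial coefficients and the iterated derivatives from collapsing, so it is essential at this step.

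Once $\det W\neq 0$ for a suitable choice, the vectors $m,\nabla(m),\dots,\nabla^{n-1}(m)$ are $A$-linearly independent, and since $M$ has rank $n$ they form a basis; this makes $m$ a cyclic vector and completes the proof. I would present the argument in the order: fix a basis and an element $t$ with $d(t)\neq 0$; write the candidate $m$ and the matrix $W$; identify the leading term of $\det W$ as a Wronskian/Vandermonde determinant; invoke characteristic $0$ and infiniteness of $A$ to specialize away from the (proper) zero locus; conclude.
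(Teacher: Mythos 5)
Note first that the paper itself does not prove this statement: its ``proof'' is the citation \cite[Theorem~5.7.2]{Ked}, so any real argument is on its own. Your sketch has the right skeleton (the Katz--Deligne pattern: candidate vector, determinant criterion, genericity), but both mechanisms you propose for the crucial step --- the nonvanishing of $\det W$ --- break down. For the first one: nonvanishing of the ``top-order'' Wronskian part of $\det W$ does not control the full determinant, because the $G$-dependent terms are not lower order in any sense that prevents cancellation. Concretely, take $n=2$, pick $t\in A$ with $d(t)=1$, and let $\nabla$ be given in the basis $e_1,e_2$ by the constant matrix
\[
G=\begin{pmatrix} 0 & 0\\ -1 & 0\end{pmatrix},
\qquad\text{i.e.}\qquad \nabla e_1=-e_2,\quad \nabla e_2=0 .
\]
Then your candidate $m=t^0\,e_1+t^1\,e_2$ satisfies $\nabla m=-e_2+d(t)\,e_2=0$, so $m,\nabla m$ are dependent and $\det W=0$, even though the Wronskian of $(1,t)$ equals $1$. (The theorem is of course true here: $e_1$ is cyclic.) This is exactly the constant-coefficient situation the paper cares about, so the failure of the specialization $f_j=t^{j-1}$ is not exotic.

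The second mechanism --- ``$\det W$ is a nonzero \emph{differential} polynomial in the $f_j$, and $A$ is an infinite field, so specialize off its zero locus'' --- invokes a principle that is not valid for differential polynomials. When you specialize $f_j$ to elements of $A$, the higher variables $d^i(f_j)$ are not free, and the zero set of a nonzero differential polynomial need not be small in any Zariski sense: the nonzero differential polynomial $d(y)$ vanishes identically on the whole field of constants, which in your setting contains $k$ and is infinite. So infiniteness of $A$ buys nothing; what is needed is a genuinely differential-algebraic argument. This is precisely what the known proofs (Katz's algorithm, Deligne's lemma, and Kedlaya's Theorem~5.7.2, the one the paper cites) supply, via a device absent from your sketch: the deformation parameter is a \emph{scalar} entering polynomially --- one perturbs a maximal candidate $m\mapsto m+\lambda v$, or translates $t\mapsto t+\lambda$, with $\lambda$ a constant --- so that failure of cyclicity becomes the vanishing of an \emph{ordinary} polynomial in $\lambda$ of bounded degree with coefficients in $A$. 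One then proves, by induction on the length $k$ of a maximal independent family $m,\nabla m,\dots,\nabla^{k-1}m$, that this polynomial is not identically zero, and uses $\QQ\subseteq A$ (characteristic $0$) to choose a good $\lambda$. That maximality-plus-induction step is the actual content of the theorem; without it, neither of your two bridges closes the argument.
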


  \begin{proof}
    See \cite[Theorem ~5.7.2.]{Ked}.
  \end{proof}

  \begin{rem}
    The last theorem means that there exists an isomorphism of
    differential modules between $(M,\nabla)$ and $(A\<D\>/A\<D\>.P(D),D)$
    for some monic differential polynomial $P(D)$ of degree $n$. 
  \end{rem}

  \begin{lem}\label{cha:diff-modul-spectr}
  Let $L$, $P$ and $Q$ be
  differential polynomials, such that $L=QP$. Then we have an exact
  sequence of differential modules:
  \[\xymatrix{ 0\ar[r]& A\< D\>/A\< D\> Q\ar[r]^i & A\< D\>/A\< D\>
      L\ar[r]^p & A\< D\>/A\< D\> P\ar[r] & 0}\]
  where the maps $i$ and $p$ are defined as follows: for a
  differential polynomial $R$, $i(\bar{R})=\overline{RP}$ and
  $p(\bar{R})=\bar{R}$.
\end{lem}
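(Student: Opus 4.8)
The plan is to verify directly that $i$ and $p$ are well-defined morphisms of differential modules and then to check exactness at each of the three spots, the only non-formal point being the injectivity of $i$. First I would check well-definedness. For $p$, the relation $L=QP$ gives $A\<D\> L = A\<D\> Q P \subseteq A\<D\> P$, so reduction modulo $A\<D\> P$ factors through reduction modulo $A\<D\> L$; thus $p$ is simply the canonical projection between the two quotients and is well-defined. For $i$, I would observe that right multiplication by $P$ sends $A\<D\> Q$ into $A\<D\> Q P = A\<D\> L$: if $R=SQ$ then $RP=S(QP)=SL\in A\<D\> L$. Hence $\bar R\mapsto\overline{RP}$ does not depend on the chosen representative, and $i$ is well-defined.

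Next I would confirm that both maps are morphisms of differential modules, i.e. that they are $A$-linear and commute with the connection, which here is left multiplication by $D$. For $p$ this is immediate, being the canonical projection of left $A\<D\>$-modules. For $i$ the key point is that right multiplication by the fixed element $P$ commutes, by associativity in $A\<D\>$, with every left multiplication; in particular $i(f\cdot\bar R)=\overline{fRP}=f\cdot i(\bar R)$ for $f\in A$ and $i(D\cdot\bar R)=\overline{DRP}=D\cdot i(\bar R)$. So $i$ is $A$-linear and intertwines the two connections.

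Then I would turn to exactness. Surjectivity of $p$ is clear, as it is a canonical projection. The composite $p\circ i$ vanishes because $\overline{RP}=0$ already in $A\<D\>/A\<D\> P$, so $\Im i\subseteq\Ker p$. For the reverse inclusion, an element of $\Ker p$ is represented by some $R\in A\<D\> P$, say $R=SP$; then $\bar R=\overline{SP}=i(\bar S)$ lies in $\Im i$. The remaining and only delicate point is the injectivity of $i$: if $i(\bar R)=0$ then $RP\in A\<D\> L=A\<D\> QP$, say $RP=SQP$, whence $(R-SQ)P=0$. Here I would invoke the fact that right multiplication by $P$ is injective on $A\<D\>$; applying it to $T=R-SQ$ gives $R=SQ\in A\<D\> Q$, i.e. $\bar R=0$.

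The only genuine obstacle is this last injectivity step, and it rests entirely on $P$ being monic. Since $P$ is monic of degree $\nu$, for any nonzero $T=t_m D^m+\cdots$ the product $TP$ has leading term $t_m D^{m+\nu}\neq 0$ (the twisting rule $Df=df+fD$ affects only how $D$ passes elements of $A$, not pure powers of $D$), so $TP=0$ forces $T=0$. This degree-and-leading-coefficient computation in the ring $A\<D\>$ requires no integrality hypothesis on $A$; everything else in the argument is a formal consequence of the identity $L=QP$ together with associativity.
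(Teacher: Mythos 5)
Your proof is correct and complete. Note that the paper does not actually prove this lemma: its ``proof'' is the single citation \cite[Section~3.5.6]{Chr}, so your direct verification is self-contained content rather than a restatement of the paper's argument. The formal parts — well-definedness of $p$ from $A\<D\>L=A\<D\>QP\subseteq A\<D\>P$, well-definedness of $i$ from $SQ\mapsto SQP=SL$, the morphism property via associativity (right multiplication by $P$ commutes with left multiplication by $D$ and by elements of $A$), surjectivity of $p$, and $\Ker p=\Im i$ — are exactly the standard consequences of $L=QP$, and you execute them correctly. The only step with genuine content, injectivity of $i$, you correctly reduce to injectivity of right multiplication by $P$ on $A\<D\>$, and your degree argument is valid precisely because $P$ is monic: the coefficient of $D^{m+\nu}$ in $TP$ equals the leading coefficient of $T$ (the twisted commutation rule $Df=df+fD$ only produces terms of lower degree), so no hypothesis on $A$ being a domain is needed. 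Your closing observation that monicity is the essential hypothesis is consistent with the paper's conventions: in its preliminaries the quotient $A\<D\>/A\<D\>P(D)$ is only introduced for monic differential polynomials (otherwise it need not be a finite free $A$-module), and the lemma is applied in the paper only to the monic polynomials $(D-a)^{n}$.
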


\begin{proof}
  See \cite[Section~3.5.6]{Chr}.
\end{proof}
  
\subsection{Spectrum associated to a differential module}\label{sec:spectr-assoc-diff-4}
\begin{hyp}
  From now on $A$ will be either a $k$-affinoid algebra associated to
  an affinoid domain of $\Ak$ or $\Hx$ for some $x\in \Ak$ not of type
  (1). Let $d$ be a bounded derivation on $A$. It is of the form $d=g(T)\dT$
  where $g(T)\in A$.
\end{hyp}

Let $(M,\nabla)$ be a differential module over $(A,d)$. In order to associate
to this differential module a spectrum we need to endow  it with a
structure of $k$-Banach space. For that, recall the following
proposition:

\begin{pro}
  There exists an equivalence of category between the category of
  finite Banach $A$-modules with bounded $A$-linear maps as morphisms
  and the category of finite $A$-modules with $A$-linear maps as morphisms.
\end{pro}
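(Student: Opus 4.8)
The plan is to show that the forgetful functor $F$---which sends a finite Banach $A$-module to its underlying $A$-module, and is the identity on (bounded $A$-linear) morphisms---is an equivalence of categories. Since $F$ is visibly faithful, it suffices to prove that $F$ is essentially surjective and full. The whole argument rests on two structural facts about the base algebra: $A$ is Noetherian (an affinoid algebra is Noetherian, and $\Hx$ is a field), and a finite free module $A^n$ carries the maximum norm $\nor{\sum_i a_ie_i}=\max_i\nor{a_i}$, for which it is a Banach $A$-module.

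First I would prove essential surjectivity, i.e.\ that every finite $A$-module $M$ admits a Banach $A$-module structure. Choosing finitely many generators yields a surjection $\pi\colon A^n\to M$; since $A$ is Noetherian, $\Ker\pi$ is finitely generated, hence closed in $A^n$, so the quotient seminorm on $M\cong A^n/\Ker\pi$ is an ultrametric norm by Lemma~\ref{sec:banach-spaces} and endows $M$ with a Banach $A$-module structure, which I denote $\Nor{q}{\cdot}$. (In the case $A=\Hx$ this step is trivial: one simply fixes a basis and takes the max norm.)

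Next I would settle fullness by showing that every $A$-linear map $f\colon M\to N$ between finite Banach $A$-modules is automatically bounded. The basic observation is that any $A$-linear map out of a finite free module endowed with the maximum norm is bounded: with $C=\max_i\nor{f(e_i)}$, submultiplicativity of the module norm gives $\nor{f(\sum_i a_ie_i)}=\nor{\sum_i a_if(e_i)}\leq \max_i\nor{a_i}\nor{f(e_i)}\leq C\max_i\nor{a_i}=C\nor{\sum_i a_ie_i}$. Precomposing $f$ with a presentation $\pi\colon A^n\to M$ and invoking the universal property of the quotient norm then shows that $f$ is bounded for the quotient norm $\Nor{q}{\cdot}$ on $M$.

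The remaining and most delicate point---the heart of the proof---is the uniqueness of the Banach structure: on a fixed finite $A$-module any two Banach $A$-module norms are bi-boundedly equivalent. Comparing a given norm $\Nor{1}{\cdot}$ with the quotient norm $\Nor{q}{\cdot}$, one inequality $\Nor{1}{\cdot}\leq c\,\Nor{q}{\cdot}$ is exactly the boundedness of $\pi$ proved in the previous paragraph. For the reverse inequality I would use that $\mathrm{id}\colon (M,\Nor{q}{\cdot})\to (M,\Nor{1}{\cdot})$ is a bounded bijection of $k$-Banach spaces and apply the open mapping theorem (cf.\ \cite[Section~2.8]{Bosc}) to conclude it is a bi-bounded isomorphism; this is where the hypothesis that $k$ is nontrivially valued enters. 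When $k$, and hence $\cO(X)$ or $\Hx$, is trivially valued the open mapping theorem is unavailable, and I expect the main obstacle to lie precisely here: I would instead compare the two norms directly and coordinatewise, using the explicit max-norm decompositions furnished by the Mittag-Leffler descriptions (Proposition~\ref{15} and the description of $\cO(X)$ in Section~\ref{sec:berkovich-line-2}), exactly as the trivially valued case is handled elsewhere in Section~\ref{sec:banach-space-1}. Granting this uniqueness, fullness follows at once by replacing the given norm on $M$ by the equivalent quotient norm, and the assignment sending a finite $A$-module to itself equipped with a chosen Banach structure defines a quasi-inverse $G$ to $F$, with $F\circ G\cong\mathrm{id}$ and $G\circ F\cong\mathrm{id}$.
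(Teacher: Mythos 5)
Your plan (forgetful functor; essential surjectivity via a presentation $A^n\to M$ and the quotient norm; fullness via automatic boundedness of $A$-linear maps out of $(A^n,\max)$; uniqueness of the Banach structure via the open mapping theorem) is the standard skeleton behind this statement, and it is worth noting that the paper itself offers no argument at all: its proof is the citation \cite[Proposition~2.1.9]{Ber}, which packages exactly the facts you are trying to establish. The problem is that your write-up assumes the hard part. The inference ``since $A$ is Noetherian, $\Ker\pi$ is finitely generated, hence closed in $A^n$'' is a genuine gap: finite generation of a submodule of a Banach module does not imply closedness, and Noetherianity of $A$ does not give it either --- closedness of ideals and of submodules of finite Banach modules is precisely the non-trivial content of the cited proposition. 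It does not follow formally from Noetherianity; for affinoid algebras such as $\cO(X)$ it is a theorem in its own right (proved via Noether normalization and Weierstrass division in the strictly affinoid case, and extended by Berkovich to the remaining cases), and in more general formulations for Noetherian Banach rings it has to be imposed as a separate hypothesis. If $\Ker\pi$ were not closed, the quotient seminorm would not separate points and the quotient would not be complete, so essential surjectivity --- and the norm comparison on which your fullness argument rests --- would collapse. As written, you have asserted the crux rather than proved it.

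The second gap is the trivially valued case of the uniqueness step. You correctly identify that the open mapping theorem is unavailable there, but the proposed remedy --- ``compare the two norms directly and coordinatewise, using the Mittag-Leffler descriptions'' --- is not an argument. Those decompositions describe the specific algebras $\cO(X)$ and $\Hx$ as $k$-Banach spaces; they give no handle on an arbitrary Banach $A$-module norm on an abstract finite $A$-module $M$, which is what must be compared with the quotient norm. The case is not vacuous: for trivially valued $k$ the paper itself works with $\Hx=k(T)$ carrying the trivial valuation (the point $x_{0,1}$), and there even the equivalence of norms on finite-dimensional $\Hx$-vector spaces falls outside the scope of the standard references, which assume the valuation nontrivial (cf.\ the hypotheses in \cite{Bosc}). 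So the two places where the real mathematical difficulty of this proposition lives are, respectively, asserted without proof and deferred to a sketch that does not work as stated; the honest short proof here is the paper's, namely to invoke \cite[Proposition~2.1.9]{Ber}.
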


\begin{proof}
  See \cite[Proposition 2.1.9]{Ber}.
\end{proof}

This means that we can endow
$M$ with a structure of finite Banach $A$-module
isomorphic to $A^n$ equipped with the maximum norm, and any other
structure of finite Banach $A$-module on $M$ is equivalent to the previous one. This induces  a
structure of Banach $k$-space on $M$. As $\nabla $ satisfies the rule
\eqref{eq:1} and $d\in \Lk{A}$, we have $\nabla \in \Lk{M}$. The
spectrum associated to $(M,\nabla)$ is denoted by $\Sigma_{\nabla,
  k}(\Lk{M})$\footnote{Note that, since all the structures of finite
  Banach $A$-module on $M$ are equivalente, the spectrum does not
  depend on the choice of such structure.} (or just by $\Sigma_\nabla$ if the dependence is obvious from the
context).

Let $\varphi:(M,\nabla)\to (N,\nabla ')$ be a morphism of differential
modules. If we endow $M$ and $N$ with a structure of $k$-Banach space
(as above) then $\varphi$ is automatically an admissible\footnote{ Which
  means that: $M/\Ker\varphi$ endowed with quotient the
  topology is isomorphic  as $k$-Banach space to $\operatorname{Im}\varphi$.} bounded
$k$-linear map (see \cite[Proposition~2.1.10]{Ber}). In the case $\varphi$ is an isomorphism, then it induces  a
bi-bounded $k$-linear isomorphism and according to Lemma
\ref{sec:berk-spectr-theory-1} we have:
\begin{equation}
  \label{eq:3}
  \Sigma_{\nabla,k}(\Lk{M})=\Sigma_{\nabla ', k}(\Lk{N}).
\end{equation}
This prove the following proposition:

\begin{pro}
  The spectrum of a connection is an invariant by isomorphisms of differential modules.
\end{pro}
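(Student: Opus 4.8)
The plan is to reduce the statement to Lemma~\ref{sec:berk-spectr-theory-1}, which asserts that a bi-bounded isomorphism of $k$-Banach algebras preserves Berkovich spectra. First I would unwind the hypothesis: an isomorphism $\varphi:(M,\nabla)\to(N,\nabla')$ of differential modules is by definition an $A$-linear bijection commuting with the connections, that is $\varphi\circ\nabla=\nabla'\circ\varphi$. Once $M$ and $N$ carry their $k$-Banach structures, $\varphi$ is automatically an admissible bounded $k$-linear map (cf. \cite[Proposition~2.1.10]{Ber}), and being bijective it is a bi-bounded $k$-linear isomorphism of Banach spaces; in particular both $\varphi$ and $\varphi^{-1}$ are bounded.

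Next I would transport $\varphi$ to the endomorphism algebras via the conjugation map
\[\Phi:\Lk{M}\longrightarrow\Lk{N},\qquad \psi\longmapsto\varphi\circ\psi\circ\varphi^{-1}.\]
This $\Phi$ is a morphism of $k$-Banach algebras: it is $k$-linear and unital, and it is multiplicative because the inner factor $\varphi^{-1}\circ\varphi$ cancels, so that $\Phi(\psi_1\psi_2)=\Phi(\psi_1)\Phi(\psi_2)$. Since $\varphi$ and $\varphi^{-1}$ are bounded, $\Phi$ is bounded with $\nor{\Phi(\psi)}\leq\nor{\varphi}\,\nor{\varphi^{-1}}\,\nor{\psi}$, and its inverse $\chi\mapsto\varphi^{-1}\circ\chi\circ\varphi$ is bounded for exactly the same reason. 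Hence $\Phi$ is a bi-bounded isomorphism of $k$-Banach algebras.

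Finally, the commutation relation rewrites as $\nabla'=\varphi\circ\nabla\circ\varphi^{-1}=\Phi(\nabla)$, so applying the equality case of Lemma~\ref{sec:berk-spectr-theory-1} to $\Phi$ with $f=\nabla$ gives
\[\Sigma_{\nabla',k}(\Lk{N})=\Sigma_{\Phi(\nabla),k}(\Lk{N})=\Sigma_{\nabla,k}(\Lk{M}),\]
which is precisely equation~\eqref{eq:3} and the assertion. I do not expect any genuine obstacle here: the only point needing care is the boundedness of $\Phi$ together with its inverse, and this is guaranteed by the bi-boundedness of $\varphi$ coming from the admissibility of morphisms of differential modules; everything else is formal, the real content being already encapsulated in Lemma~\ref{sec:berk-spectr-theory-1}.
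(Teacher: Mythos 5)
Your proof is correct and follows essentially the same route as the paper: the paper also invokes the automatic bi-boundedness of a differential-module isomorphism (via \cite[Proposition~2.1.10]{Ber}) and then applies the equality case of Lemma~\ref{sec:berk-spectr-theory-1}. The only difference is that you spell out the conjugation isomorphism $\Phi:\Lk{M}\to\Lk{N}$, $\psi\mapsto\varphi\circ\psi\circ\varphi^{-1}$, and the identity $\Phi(\nabla)=\nabla'$, which the paper leaves implicit; this is a welcome clarification, not a different argument.
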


\begin{pro}\label{4}
	Let $0\rightarrow (M_1,\nabla_1)\rightarrow (M,\nabla)\rightarrow (M_2,\nabla_2)\rightarrow 0$
	be an exact sequence of differential modules over $(A,d)$.\\Then we have:
	$\Sigma_{\nabla}(\Lk{M}) \subset\Sigma_{\nabla_1}(\Lk{M_1})\cup \Sigma_{\nabla_2}(\Lk{M_2})$, with equality if\\ ${\Sigma_{\nabla_1}(\Lk{M_1})\cap \Sigma_{\nabla_2}(\Lk{M_2})=\emptyset}$.
\end{pro}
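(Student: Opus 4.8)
The plan is to reduce Proposition~\ref{4} to the purely functional-analytic statement already proved in Lemma~\ref{10}. The key observation is that an exact sequence of differential modules
\[0\to (M_1,\nabla_1)\to (M,\nabla)\to (M_2,\nabla_2)\to 0\]
is in particular an exact sequence of the underlying $k$-Banach spaces, and by the admissibility remark following the cyclic vector discussion (see \cite[Proposition~2.1.10]{Ber}) the maps are admissible. First I would use this admissibility to split the sequence as $k$-Banach spaces: identifying $M_1$ with its image in $M$, we get a $k$-Banach space isomorphism $M\simeq M_1\oplus M_2$ compatible with the max norm (up to bi-bounded isomorphism, which by \eqref{eq:3} and Lemma~\ref{sec:berk-spectr-theory-1} does not affect the spectrum). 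Under this identification $M_1$ is a $\nabla$-stable subspace, i.e. $\nabla(M_1)\subset M_1$, precisely because $M_1\to M$ is a morphism of differential modules.

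Next I would check that, with the projections $p_1,p_2$ and inclusions $i_1,i_2$ associated to the decomposition $M=M_1\oplus M_2$, the operators $\varphi_1:=p_1\nabla i_1$ and $\varphi_2:=p_2\nabla i_2$ coincide with $\nabla_1$ and $\nabla_2$ respectively. For $\varphi_1$ this is immediate since $\nabla$ restricts to $\nabla_1$ on $M_1$. For $\varphi_2$ one uses that $M\to M_2$ intertwines $\nabla$ and $\nabla_2$: for $m_2\in M_2$, lifting it to some $m\in M$ with $p_2(m)=m_2$, the image of $\nabla(m)$ in $M_2$ is $\nabla_2(m_2)$, which is exactly $p_2\nabla i_2(m_2)$ modulo the contribution of $M_1$; since we only project onto $M_2$, this gives $\varphi_2=\nabla_2$. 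This is the bookkeeping step where one must be slightly careful, but it is routine once the splitting is fixed.

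Having matched the data, I would simply invoke Lemma~\ref{10}. Since $\nabla(M_1)\subset M_1$, part (ii) of that lemma gives the inclusion
\[\Sigma_{\nabla}(\Lk{M})\subset \Sigma_{\nabla_1}(\Lk{M_1})\cup\Sigma_{\nabla_2}(\Lk{M_2}),\]
and part (iii) gives the reverse inclusion, hence equality, under the hypothesis $\Sigma_{\nabla_1}(\Lk{M_1})\cap\Sigma_{\nabla_2}(\Lk{M_2})=\emptyset$. This yields the proposition verbatim.

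The only genuine subtlety, and the step I expect to be the main obstacle, is justifying that the $k$-Banach space splitting $M\simeq M_1\oplus M_2$ may be taken with the max norm up to a spectrum-preserving isomorphism; one must invoke admissibility of the morphisms together with the equivalence of all finite Banach $A$-module structures (\cite[Proposition~2.1.9]{Ber}) so that Lemma~\ref{10}, which is stated for a genuine direct sum with the max norm, applies. Everything else is a direct transcription of Lemma~\ref{10} into the differential-module language.
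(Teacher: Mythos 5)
Your overall strategy is the paper's: split $M\simeq M_1\oplus M_2$, observe $\nabla(M_1)\subset M_1$ and that $p_1\nabla i_1=\nabla_1$, $p_2\nabla i_2=\nabla_2$, then quote Lemma~\ref{10}(ii) for the inclusion and (iii) for the equality when the spectra are disjoint. All of that bookkeeping is correct. The gap is precisely in the step you flag as the crux: you derive the splitting from \emph{admissibility} of the morphisms, and admissibility does not produce a splitting. Admissibility of $g:M\to M_2$ only says that the quotient Banach structure on $M/\Ker g$ agrees bi-boundedly with that of $M_2$; to write $M\simeq M_1\oplus M_2$ you need a \emph{bounded $k$-linear section} of $g$, i.e. $M_1$ must be complemented in $M$ as a $k$-Banach space. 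For ultrametric Banach spaces over a general base field this can fail (closed subspaces need not be complemented when $k$ is not spherically complete), and here $M$, $M_1$, $M_2$ are typically infinite-dimensional over $k$, so no general Banach-space principle rescues the step. Your appeal to \cite[Proposition~2.1.9]{Ber} does not fill the hole either: the equivalence of categories can transfer an \emph{already given} $A$-module isomorphism into a bi-bounded one, but it cannot create the isomorphism $M\simeq M_1\oplus M_2$ in the first place.

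The missing idea --- and the paper's actual one-line argument --- is algebraic, not analytic: $M_2$ is a finite \emph{free} $A$-module, hence projective, so the exact sequence splits $A$-linearly (lift a basis of $M_2$ through $g$). Such an $A$-linear section between finite Banach $A$-modules is automatically bounded by \cite[Propositions~2.1.9, 2.1.10]{Ber}, so the resulting identification $M\simeq M_1\oplus M_2$ with the max norm is bi-bounded and, by Lemma~\ref{sec:berk-spectr-theory-1}, spectrum-preserving. With that substitution your proof goes through verbatim; note also that the splitting being merely $A$-linear (not a splitting of differential modules) is harmless, since Lemma~\ref{10} only requires $\nabla$-stability of $M_1$, which follows from $f$ being a morphism of differential modules, and not $\nabla$-stability of the complement.
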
	
\begin{proof}
	As $M_1$, $M_2$ and $M$ are free $A$-modules, the
        sequence: \[\xymatrix{0\ar[r] & M_1\ar[r]^f &M\ar[r]^g&
            M_2\ar[r]& 0\\}\] splits. Hence, we have $M=M_1\oplus M_2$
        where $f$ is the inclusion of $M_1$ into $M$ and $g$ is the
        projection of $M$ onto $M_2$. Let $p_1$ be the projection of
        $M$ onto $M_1$ and $i_2$ be the inclusion of
        $M_ 2$ into $M$. As both $f$ and $g$ are morphisms of differential
        modules, we have $\nabla(M_1)\subset M_1$,  $\nabla_1=p_1\nabla f$ and $\nabla_2=g\nabla i_2$. 
	By Lemma \ref{10} and Remark \ref{16} we obtain the result.		
      \end{proof}

\begin{rem}\label{18}
	We maintain the assumption of Lemma \ref{4}. If in addition we have an other exact sequence of the form: \[0\rightarrow (M_2,\nabla_2)\rightarrow (M,\nabla)\rightarrow (M_1,\nabla_1)\rightarrow 0,\] 
	then the equality holds, this is a consequence of Remark
        \ref{16}. Indeed, If $\nabla\ot 1-1\ot T(x)$ is invertible,
        then the first exact sequence shows that $\nabla_1\ot 1-1\ot
        T(x)$ is left invertible and $\nabla_2\ot 1-1\ot T(x)$ is
        right invertible, the second exact sequence to $\nabla_2\ot 1-1\ot
        T(x)$ is left invertible and $\nabla_1\ot 1-1\ot T(x)$ is
        right invertible. Therefore, both of $\nabla_1\ot 1-1\ot T(x)$
        and $\nabla_2\ot 1-1\ot T(x)$ are invertible. Hence, we obtain $\Sigma_{\nabla_1}\cup\Sigma_{\nabla_2}\subset\Sigma_{\nabla}$.
      \end{rem}

      \begin{rem}\label{sec:spectr-assoc-diff}
        If moreover we have $M=M_1\oplus M_2$ as differential
        modules then we have ${\Sigma_{\nabla}=\Sigma_{\nabla_1}\cup \Sigma_{\nabla_2}}$.
      \end{rem}

\begin{rem}
  Set notation as in Proposition~\ref{4}. We suppose that $A=\Hx$ for
  some point $x\in \Ak$ not of type (1). For the spectral semi-norm it
  is know (see \cite[Lemma 6.2.8]{Ked}) that we have:
  \[\nsp{\nabla}=\max\{\nsp{\nabla_1}, \nsp{\nabla_2}\}.\]
\end{rem}

We say that a differential module $(M,\nabla)$ over
$(A,d)$ of rank $n$ is trivial if it isomorphic 
to  $(A^n,d)$ as a differential module.

\begin{lem}\label{sec:spectr-assoc-diff-3}
  Let $(M,\nabla)$ be a differential module over a differential field
  $(K,d)$ of rank $n$. If the $k$-vector space $\Ker{\nabla}$ has
  dimension equal to $n$, then $(M,\nabla)$ is a trivial differential module.
\end{lem}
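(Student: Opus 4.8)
The plan is to produce a $K$-basis of $M$ made of horizontal vectors, i.e.\ of elements of $\Ker\nabla$. In such a basis the connection acts coordinatewise as $d$, so that the matrix $G$ of \eqref{eq:1} vanishes identically; this is exactly what it means for $(M,\nabla)$ to be isomorphic to $(K^n,d)$, hence trivial. Thus the whole statement reduces to showing that $\Ker\nabla$, which is assumed to be $n$-dimensional over $k$, contains $n$ vectors that remain linearly independent over the much larger field $K$.

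The heart of the matter is the classical independence principle: a family $e_1,\dots,e_m\in\Ker\nabla$ that is $k$-linearly independent is automatically $K$-linearly independent. I would prove this by a minimal-relation argument. Suppose there were a nontrivial $K$-linear relation $\sum_{i=1}^m f_i e_i=0$ with $f_i\in K$; choose one involving the fewest nonzero coefficients and rescale so that one coefficient, say $f_1$, equals $1$. Applying $\nabla$ and using $\nabla e_i=0$ yields $\sum_{i=1}^m d(f_i)\,e_i=0$, and since $d(f_1)=d(1)=0$ this is a strictly shorter relation; by minimality every $d(f_i)=0$, so each $f_i$ lies in the field of constants of $(K,d)$. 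This step is where the hypotheses of the paper are essential: because $\mathrm{char}\,k=0$ and $d=g(S)\,d/dS$, the constants of $(K,d)$ are exactly $k$, so the relation $\sum_i f_i e_i=0$ now has coefficients in $k$, contradicting the assumed $k$-linear independence.

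Granting the principle, let $e_1,\dots,e_n$ be a $k$-basis of $\Ker\nabla$, which exists since $\dim_k\Ker\nabla=n$. They are $K$-linearly independent in $M$, and as $\dim_K M=n$ they form a $K$-basis. For arbitrary $f_1,\dots,f_n\in K$ we then have $\nabla\bigl(\sum_i f_i e_i\bigr)=\sum_i d(f_i)\,e_i$, so the $K$-linear isomorphism $M\to K^n$ sending $e_i$ to the canonical basis vector transports $\nabla$ into $d$ acting componentwise. This exhibits an isomorphism of differential modules $(M,\nabla)\cong(K^n,d)$, which is the desired triviality.

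The only real content is the independence principle together with the identification of the constant field with $k$; once these are in place the conclusion is a pure dimension count. Accordingly, the main obstacle I anticipate is not the linear algebra but justifying that the constants of $(K,d)$ reduce to $k$ in the situations of interest (the fields $\Hx$ and the affinoid algebras of the running hypotheses), since without this the statement can fail when $k$ is strictly smaller than the constants.
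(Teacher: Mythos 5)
Your Wronskian skeleton (minimal $K$-relation among horizontal vectors, then a dimension count) is the standard proof and is essentially what the paper's only stated justification, the citation to Christol, amounts to; the paper gives no argument of its own. The genuine gap is the step you yourself flag: you assert that the constants of $(K,d)$ are exactly $k$ ``because $\mathrm{char}\,k=0$ and $d=g(S)\,d/dS$''. As the lemma is stated, $(K,d)$ is an abstract differential field over $k$, and there this claim is simply false --- and so is the lemma: take $k=\QQ_p$, $K=\QQ_p(i)(S)$, $d=d/dS$, and $M=Ke_1\oplus Ke_2$ with $\nabla e_1=0$ and $\nabla(fe_2)=(d(f)+f)e_2$; then $\Ker\nabla=\QQ_p(i)\,e_1$ has $k$-dimension $2=\operatorname{rank}M$, yet $M$ is not trivial, since the trivial module $(K^2,d)$ has horizontal space $\QQ_p(i)^2$ of $k$-dimension $4$, and $\dim_k\Ker\nabla$ is an isomorphism invariant. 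Moreover, even for the fields the paper actually uses, your proposed repair (prove $\Ker(d/dS)=k$ in $\Hx$) is not routine: it works for points of type (2) and (3) via the explicit Mittag-Leffler and Laurent expansions, but for a point of type (4) in positive residual characteristic there is no such expansion, the naive estimate $\inf_{a\in k}|g-a|_x\le C\,|g'|_x$ fails (try $g=(S-c)^{p^m}$ with $m$ large), and identifying the constant field there is a delicate matter that your proposal leaves entirely open.

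The good news is that no analytic input is needed at all, because the hypothesis of the lemma already controls the constants once $k$ is algebraically closed --- an assumption the paper carries where the lemma is applied, and which the counterexample above shows to be necessary. Indeed, let $C=\Ker(d)\subset K$ be the field of constants. Then $\Ker\nabla$ is naturally a $C$-vector space, since $\nabla(cm)=d(c)m+c\nabla(m)=0$ for $c\in C$, $m\in\Ker\nabla$. Choosing any nonzero $v\in\Ker\nabla$ gives a $k$-linear embedding $C\hookrightarrow\Ker\nabla$, $c\mapsto cv$, whence $[C:k]\le\dim_k\Ker\nabla=n<\infty$; algebraic closedness of $k$ then forces $C=k$. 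With $C=k$ established this way, your minimal-relation argument is complete exactly as written (the coefficients it produces lie in $C=k$, contradicting $k$-independence), and the rest of your proof --- a $k$-basis of $\Ker\nabla$ is $K$-free, hence a $K$-basis of $M$ by the rank count, and in that basis $\nabla$ acts coordinatewise as $d$, so $(M,\nabla)\cong(K^n,d)$ --- goes through verbatim.
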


\begin{proof}
  See \cite[Proposition~3.5.3]{Chr}.
\end{proof}

\begin{cor}
We suppose that $A=\Hx$ for some $x\in\Ak$ not of type (1). Let
$(M,\nabla)$ be a differential module over $(A,d)$. If $\text{dim}(\Ker\nabla)=n$, then $\Sigma_\nabla(\Lk{M})=\Sigma_d(\Lk{A})$.
\end{cor}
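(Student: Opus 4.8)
The plan is to combine the triviality criterion of Lemma~\ref{sec:spectr-assoc-diff-3} with the invariance of the spectrum under isomorphisms of differential modules. The hypothesis $\dim_k(\Ker\nabla)=n$, where $n$ is the rank of $M$, is exactly the condition appearing in Lemma~\ref{sec:spectr-assoc-diff-3}, since $A=\Hx$ is a field and hence $(A,d)$ is a differential field. So the first step is simply to invoke that lemma to conclude that $(M,\nabla)$ is a \emph{trivial} differential module, i.e.\ there is an isomorphism of differential modules $(M,\nabla)\cong (A^n,d)$, where $d$ acts componentwise on $A^n$.

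Next I would transport the spectrum across this isomorphism. By the Proposition stating that the spectrum of a connection is an invariant under isomorphisms of differential modules (which itself rests on equation~\eqref{eq:3} and Lemma~\ref{sec:berk-spectr-theory-1}), we obtain
\[
  \Sigma_\nabla(\Lk{M}) = \Sigma_d(\Lk{A^n}),
\]
where on the right $d$ denotes the componentwise derivation on $A^n=A\oplus\cdots\oplus A$ (endowed with the max norm).

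The final step is to identify $\Sigma_d(\Lk{A^n})$ with $\Sigma_d(\Lk{A})$. Here $A^n$ is the direct sum of $n$ copies of $A$, and the derivation $d$ preserves each summand, acting as the single derivation $d$ on each factor. I would apply Remark~\ref{sec:spectr-assoc-diff} (or iterate Remark~\ref{sec:spectr-assoc-diff} together with part~(ii) of Lemma~\ref{10}): since $A^n$ decomposes as a direct sum of differential modules each isomorphic to $(A,d)$, the spectrum of the whole is the union of the spectra of the summands, namely $\bigcup_{i=1}^n \Sigma_d(\Lk{A}) = \Sigma_d(\Lk{A})$. This yields the claimed equality $\Sigma_\nabla(\Lk{M})=\Sigma_d(\Lk{A})$.

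The argument is essentially a chain of citations, so there is no deep obstacle; the only point requiring a little care is the last identification, where one must be sure that the $k$-Banach structure on $A^n$ used to define $\Sigma_d(\Lk{A^n})$ is compatible with the max-norm direct sum decomposition so that Remark~\ref{sec:spectr-assoc-diff} genuinely applies. This is guaranteed by the conventions fixed just before the statement (the Banach $A$-module structure on a trivial module is the max norm on $A^n$), so the compatibility is automatic and the proof reduces to assembling the three steps above.
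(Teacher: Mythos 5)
Your proposal is correct and follows essentially the same route as the paper: invoke Lemma~\ref{sec:spectr-assoc-diff-3} to trivialize $(M,\nabla)\cong(A^n,d)$, use invariance of the spectrum under isomorphism of differential modules, and then conclude by induction on the direct-sum decomposition via Remark~\ref{sec:spectr-assoc-diff}, giving $\bigcup_{i=1}^n\Sigma_d(\Lk{A})=\Sigma_d(\Lk{A})$. The paper's proof is just a terser statement of exactly these steps.
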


\begin{proof}
By Lemma~\ref{sec:spectr-assoc-diff-3} there exists
$\{e_1,\dots,e_n\}$  a basis of $M$ as an $A$-module for which
$\nabla$ satisfies the rule:
\[\nabla \begin{pmatrix} f_1\\ \vdots \\ f_{n} \end{pmatrix}
		= \begin{pmatrix} df_1\\ \vdots \\
                  df_{n} \end{pmatrix}.\]
By induction and  Remark \ref{sec:spectr-assoc-diff}
              we obtain the result. 
            \end{proof}

\begin{lem}\label{19}
	We suppose that $k$ is algebraically closed. Let $(M,\nabla)$
        be a differential module over $(A,d)$ such that $G\in
        \cM_n(k)$(cf.~\eqref{eq:1}) and $\{a_1,\cdots, a_N\}$ is the
        set of the eigenvalues of $G$. Then we have an isomorphisme of
        differential modules:
	\[(M,\nabla)\simeq (\bigoplus\limits_{1\leq i\leq
            N}\bigoplus\limits_{1\leq j\leq N_i} A\< D\>/(D-a_i)^{n_{i,j}},D) \]
	where the $n_{i,j}$ are positive integers such that
        $\sum_{j=1}^{N_i}n_{i,j}$ is the multiplicity of $a_i$  and  $\sum_{i,j}n_{i,j}=n$.
\end{lem}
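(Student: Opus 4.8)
The plan is to reduce to the Jordan normal form of $G$ by a \emph{constant} change of basis, decompose the module along the Jordan blocks, and then recognise each block as a cyclic quotient.

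First I would exploit that $G\in\cM_n(k)$. Recall that replacing the chosen basis of $M$ by its image under a matrix $P\in\GL_n(A)$ transforms the connection matrix $G$ into $P^{-1}GP+P^{-1}d(P)$. Since $d=g(T)\dT$ is $k$-linear and kills the constants (so that $d(1)=0$ and $d(\lambda)=0$ for all $\lambda\in k$), any $P$ with entries in $k$ satisfies $d(P)=0$, and the transformation reduces to ordinary conjugation $G\mapsto P^{-1}GP$. As $k$ is algebraically closed, I may choose $P\in\GL_n(k)$ so that $P^{-1}GP$ is in Jordan normal form. The resulting change of basis is an isomorphism of differential modules, so by \eqref{eq:3} I am reduced to the case where $G$ is already a block-diagonal Jordan matrix.

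A block-diagonal connection matrix yields a direct sum decomposition $(M,\nabla)\simeq\bigoplus_{i,j}(A^{n_{i,j}},d+J_{i,j})$ of differential modules, where $J_{i,j}$ is the Jordan block of size $n_{i,j}$ with eigenvalue $a_i$; here, for each eigenvalue $a_i$, the integers $n_{i,1},\dots,n_{i,N_i}$ are the sizes of the blocks attached to $a_i$, so that $\sum_j n_{i,j}$ is its multiplicity and $\sum_{i,j}n_{i,j}=n$. It therefore remains to identify a single block $(A^m,d+J)$, with $J$ the Jordan block of size $m$ and eigenvalue $a\in k$, with $A\<D\>/A\<D\>(D-a)^m$.

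For this, write $e_1,\dots,e_m$ for the standard basis. A direct computation with \eqref{eq:1} gives $(\nabla-a)(e_1)=0$ and $(\nabla-a)(e_j)=e_{j-1}$ for $2\le j\le m$, whence $(\nabla-a)^{m-\ell}(e_m)=e_\ell$ for $1\le\ell\le m$ and $(\nabla-a)^m(e_m)=0$. Thus $e_m$ is a cyclic vector. The assignment $D\mapsto\nabla$, $\lambda\mapsto\lambda\cdot\mathrm{id}$ defines a ring morphism $A\<D\>\to\End_k(A^m)$ (the relation $\nabla(\lambda m)=d(\lambda)m+\lambda\nabla(m)$ is exactly $D\lambda=d(\lambda)+\lambda D$), and the map $\overline{R(D)}\mapsto R(\nabla)(e_m)$ is therefore a well-defined morphism of differential modules $A\<D\>/A\<D\>(D-a)^m\to(A^m,d+J)$, since $(D-a)^m$ is sent to $(\nabla-a)^m(e_m)=0$. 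It is surjective because $e_m$ is cyclic, and as both modules are free of rank $m$ it is an isomorphism. I emphasise that I do \emph{not} invoke the cyclic vector theorem, which requires $A$ to be a field, whereas here $A=\cO(X)$ may fail to be one; the cyclic vector $e_m$ is produced by hand. The only points demanding care are the vanishing $d(P)=0$ that makes the reduction to Jordan form a morphism of differential modules, and the explicit check that $(D-a)^m$ is precisely the operator annihilating $e_m$; both are elementary.
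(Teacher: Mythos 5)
Your proof is correct and follows essentially the same route as the paper, whose entire proof reads ``Consequence of the Jordan reduction'': a constant change of basis (so that $d(P)=0$ and the gauge transformation is plain conjugation) puts $G$ in Jordan form, the module splits along the blocks, and each block is identified with $A\langle D\rangle/A\langle D\rangle(D-a)^{m}$ via the explicit cyclic vector $e_m$. Your expansion is a faithful and careful filling-in of that one line, and your remark that the cyclic vector theorem is not invoked (so the argument also works when $A=\cO(X)$ is not a field) is exactly the right point to make explicit.
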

\begin{proof}
	Consequence of the Jordan reduction.
\end{proof}	
	\begin{lem}\label{20}
		Let $(M,\nabla)$ be the differential
                module over $(A,d)$  associated to the differential polynomial
                $(D-a)^n$, where $a\in k$. The spectrum of $\nabla$ is
                $\Sigma_\nabla(\Lk{M})=a+\Sigma_d(\Lk{A})$ (the image of $\Sigma_d$ by
                the polynomial $T+a$).
              \end{lem}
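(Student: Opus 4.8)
The plan is to argue by induction on $n$, realizing $M=A\< D\>/A\< D\>(D-a)^n$ as an iterated extension of lower‑rank pieces and transporting the spectrum through Proposition~\ref{4} and Remark~\ref{18}. For the base case $n=1$ the module is $M=A\< D\>/A\< D\>(D-a)\cong A$, with $\nabla$ equal to multiplication by $D$; since $D\equiv a$ in the quotient and $D\cdot f=d(f)+fD$, the connection is $\nabla=d+a\cdot\Id_A$. Applying Lemma~\ref{sec:berk-spectr-theory} to the polynomial $P(T)=T+a\in k[T]$ and $f=d\in\Lk{A}$ then yields $\Sigma_\nabla(\Lk{M})=\Sigma_{d+a}(\Lk{A})=(T+a)(\Sigma_d(\Lk{A}))=a+\Sigma_d(\Lk{A})$, which is the desired formula.

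For the inductive step I would factor $(D-a)^n=(D-a)^{n-1}(D-a)=(D-a)(D-a)^{n-1}$ in the two possible orders. By Lemma~\ref{cha:diff-modul-spectr} these factorizations produce two exact sequences of differential modules
\[0\to M_1\to M\to M_2\to 0\qquad\text{and}\qquad 0\to M_2\to M\to M_1\to 0,\]
where $(M_1,\nabla_1)$ is the module attached to $(D-a)^{n-1}$ and $(M_2,\nabla_2)$ the rank‑one module attached to $(D-a)$. By the induction hypothesis $\Sigma_{\nabla_1}(\Lk{M_1})=a+\Sigma_d(\Lk{A})$ and by the base case $\Sigma_{\nabla_2}(\Lk{M_2})=a+\Sigma_d(\Lk{A})$, so these two spectra coincide. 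The first sequence together with Proposition~\ref{4} immediately gives the easy inclusion $\Sigma_\nabla\subset\Sigma_{\nabla_1}\cup\Sigma_{\nabla_2}=a+\Sigma_d(\Lk{A})$.

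The main obstacle is the reverse inclusion: because $\Sigma_{\nabla_1}$ and $\Sigma_{\nabla_2}$ are equal, hence \emph{not} disjoint, Proposition~\ref{4} alone does not deliver equality. This is exactly where having both exact sequences pays off, via Remark~\ref{18}. If $\nabla\ot 1-1\ot T(x)$ were invertible, then by Remark~\ref{16} the first sequence forces $\nabla_1\ot 1-1\ot T(x)$ to be left invertible and $\nabla_2\ot 1-1\ot T(x)$ to be right invertible, while the second sequence (with the roles of sub and quotient exchanged) forces the opposite one‑sided invertibilities. Combining, both $\nabla_1\ot 1-1\ot T(x)$ and $\nabla_2\ot 1-1\ot T(x)$ are invertible, so $x\notin\Sigma_{\nabla_1}\cup\Sigma_{\nabla_2}$. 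This proves $\Sigma_{\nabla_1}\cup\Sigma_{\nabla_2}\subset\Sigma_\nabla$ and hence the equality $\Sigma_\nabla(\Lk{M})=a+\Sigma_d(\Lk{A})$.

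As a sanity check I would also keep in mind a direct route that avoids the induction: in the $A$-basis $\{(D-a)^j\}_{j=0}^{n-1}$ a short computation using $(D-a)f=d(f)+f(D-a)$ shows that the connection has the triangular form $\nabla=(d+a)\ot I_n+N$, with $N\in\cM_n(k)$ the nilpotent subdiagonal shift. Then $\nabla\ot 1-1\ot T(x)=B\,I_n+N$ where $B=(d+a)\ot 1-1\ot T(x)$; since $B\,I_n$ and $N$ commute and $N^n=0$, the operator $B\,I_n+N$ is invertible in $\Lk{M}\ct_k\Hx$ if and only if $B$ is, which again identifies $\Sigma_\nabla$ with $\Sigma_{d+a}=a+\Sigma_d$. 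The only subtlety in this variant is to confirm that invertibility is tested in $\Lk{M}\ct_k\Hx$ and that the explicit triangular inverse $\sum_{j=0}^{n-1}(-1)^jB^{-(j+1)}N^j$ has bounded entries, which is automatic since only finitely many matrix operations are involved.
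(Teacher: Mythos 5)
Your proof is correct and follows essentially the same route as the paper: induction on $n$, the two exact sequences obtained from the factorizations $(D-a)^n=(D-a)^{n-1}(D-a)=(D-a)(D-a)^{n-1}$ via Lemma~\ref{cha:diff-modul-spectr}, equality of the spectra via Remark~\ref{18} (combining the one-sided invertibilities of Remark~\ref{16}), and the shift $\Sigma_{d+a}=a+\Sigma_d$ from Lemma~\ref{sec:berk-spectr-theory}. The paper's proof is a terser version of exactly this argument; your supplementary triangular computation in the basis $\{(D-a)^j\}_{j=0}^{n-1}$ is a valid independent check but is not needed.
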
 

              \begin{proof}
		By Lemma~\ref{cha:diff-modul-spectr}, we have the exact sequences:
		\[0\to(A\<D\>/(D-a)^{n-1},D)\to (A\<D\>/(D-a)^n,D)\to (A\<D\>/(D-a),D)\to 0\] 
		and 
		\[0\to(A\<D\>/(D-a),D)\to (A\<D\>/(D-a)^n,D)\to (A\<D\>/(D-a)^{n-1},D)\to 0\]
		By induction and Remark
                \ref{18}, we have $\Sigma_D=\Sigma_{d+a}$. By Lemma
                \ref{sec:berk-spectr-theory}, we obtain
                $\Sigma_D=a+\Sigma_d$.
	\end{proof}

	\begin{pro}\label{6}
		We suppose that $k$ is algebraically closed. Let $(M,\nabla)$ be a differential module over $(A,d)$
                such that:
		\[\nabla \begin{pmatrix} f_1\\ \vdots \\ f_n \end{pmatrix}
		= \begin{pmatrix} df_1\\ \vdots \\ df_n \end{pmatrix}+G 
		\begin{pmatrix} f_1\\ \vdots \\ f_n \end{pmatrix},\]
		with $G\in \cM_n(k)$. The spectrum of $\nabla$ is
                $\Sigma_{\nabla}=\bigcup_{i=1}^{N}(a_i+\Sigma_d)$, where
                $\{a_1,\dots, a_N\}$ are  the eigenvalues of $G$.
               
	\end{pro}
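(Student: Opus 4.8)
The plan is to decompose $(M,\nabla)$ into elementary pieces using the hypothesis that $k$ is algebraically closed, and then to assemble the spectra of these pieces, each of which has already been computed in Lemma~\ref{20}.

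First, since $G\in\cM_n(k)$ and $k$ is algebraically closed, I would invoke Lemma~\ref{19} to obtain an isomorphism of differential modules
\[(M,\nabla)\simeq\bigoplus_{i=1}^N\bigoplus_{j=1}^{N_i}A\< D\>/(D-a_i)^{n_{i,j}}.\]
Because the spectrum of a connection is invariant under isomorphism of differential modules (cf.~\eqref{eq:3}), it is enough to compute the spectrum of the right-hand side. Note that this decomposition is an internal direct sum \emph{as differential modules}, which is exactly the setting in which Remark~\ref{sec:spectr-assoc-diff} applies and gives the full equality $\Sigma_\nabla=\Sigma_{\nabla_1}\cup\Sigma_{\nabla_2}$ for a two-term direct sum, rather than only the inclusion of Proposition~\ref{4}.

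Next, a straightforward induction on the number of summands, using Remark~\ref{sec:spectr-assoc-diff} at each step, yields
\[\Sigma_\nabla=\bigcup_{i=1}^N\bigcup_{j=1}^{N_i}\Sigma_{\nabla_{i,j}},\]
where $\nabla_{i,j}$ denotes the connection on the summand $A\< D\>/(D-a_i)^{n_{i,j}}$. Finally, by Lemma~\ref{20} each such summand has spectrum $\Sigma_{\nabla_{i,j}}=a_i+\Sigma_d$, independently of $j$; hence the inner union over $j$ collapses for each fixed $i$, and I obtain $\Sigma_\nabla=\bigcup_{i=1}^N(a_i+\Sigma_d)$, as claimed.

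As for the main difficulty: the genuine content has already been isolated in the preliminary results—Lemma~\ref{19} performs the Jordan reduction (where algebraic closedness is used), Lemma~\ref{20} treats a single Jordan block through the two exact sequences and the image formula of Lemma~\ref{sec:berk-spectr-theory}, and Remark~\ref{sec:spectr-assoc-diff} converts direct sums into unions of spectra. The present statement is therefore essentially an assembly of these ingredients, and the only point requiring a little care is to justify the direct-sum-to-union step by the \emph{equality} of Remark~\ref{sec:spectr-assoc-diff} (which is legitimate precisely because the decomposition is one of differential modules) rather than the weaker inclusion available in general.
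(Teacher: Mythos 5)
Your proposal is correct and follows exactly the paper's own proof: the paper's argument for Proposition~\ref{6} is precisely the combination of the Jordan decomposition of Lemma~\ref{19}, the single-block computation of Lemma~\ref{20}, and the direct-sum-to-union equality of Remark~\ref{sec:spectr-assoc-diff}, together with the isomorphism-invariance of the spectrum from~\eqref{eq:3}. Your write-up merely makes explicit the induction on summands and the collapse of the union over $j$, which the paper leaves implicit.
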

	
	\begin{proof}
		Using the decomposition of Lemma \ref{19}, \ref{20}  and Remark \ref{sec:spectr-assoc-diff}, we obtain the result. 
		
              \end{proof}

              \begin{rem}
                This claim shows that the spectrum of a connection
                depends highly on the choice of the derivation $d$.
              \end{rem}

              \begin{nota}
                From now on we will fix $S$ to be the coordinate
                function of the analytic domain where the linear
                differential equation is defined and $T$ to be the
                coordinate function on $\Ak$ (for the computation of
                the spectrum).
              \end{nota}

\begin{lem}\label{13}
We assume that $k$ is algebraically closed. Let $\omega$ be the real
positive number introduced in \eqref{eq:7}.
  \begin{itemize}                
  \item Let $X$ be a connected affinoid domain as in
                \eqref{eq:2} and set $r=\min\limits_{0\leq i\leq \mu} r_i$. The operator norm of $(\d)^n$ as an element of $\Lk{\cO(X)}$ satisfies:
		\[\|(\d)^n\|_{\Lk{\cO(X)}}=\frac{|n!|}{r^n},\qqq \|\d\|_{Sp,\Lk{\cO(X)}}=\frac{\omega}{r}. \]

\item Let $x\in \Ak$ be a point of type (2), (3) or (4). The operator norm of $(\d)^n$ as an element of $\Lk{\Hx}$ satisfies:
		\[\|(\d)^n\|_{\Lk{\Hx}}=\frac{|n!|}{r(x)^n},\qqq \|\d\|_{Sp,\Lk{\Hx}}=\frac{\omega}{r(x)}. \]
\end{itemize}
\end{lem}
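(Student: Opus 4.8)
The plan is to reduce all cases to the action of $(\d)^n$ on the monomials $(S-c)^i$ together with one arithmetic estimate. Writing $(\d)^n(S-c)^i = i(i-1)\cdots(i-n+1)\,(S-c)^{i-n}$, I observe that the falling factorial equals $n!\binom{i}{n}$ and that $\binom{i}{n}\in\ZZ$ for every $i\in\ZZ$ (positive or negative). Since $k$ has characteristic $0$, every integer $m$ satisfies $|m|\le 1$, so $|\binom{i}{n}|\le 1$ and hence $|i(i-1)\cdots(i-n+1)|\le |n!|$ for all $i$, with equality attained at $i=n$. On the disk algebra $\fdisf{c}{r}$, comparing coefficients term by term (the norm being the maximum of the $|a_i|r^i$) yields $\nor{(\d)^n f}\le \frac{|n!|}{r^n}\nor{f}$ for all $f$, while $f=(S-c)^n$ realizes the bound; thus $\nor{(\d)^n}=\frac{|n!|}{r^n}$ on $\fdisf{c}{r}$.

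For a general connected affinoid $X$ as in \eqref{eq:2} I would use its Mittag--Leffler decomposition $\cO(X)=\bigoplus_{i=1}^\mu E_i\oplus\fdisf{c_0}{r_0}$, where $E_i$ consists of principal parts in $(S-c_i)^{-1}$. The same falling-factorial estimate applied to $(S-c_i)^{-j}$ (the factor $j(j+1)\cdots(j+n-1)$ again has absolute value $\le|n!|$, with value $|n!|$ at $j=1$) shows the norm of $(\d)^n$ on the summand $E_i$ equals $\frac{|n!|}{r_i^n}$, and on $\fdisf{c_0}{r_0}$ it equals $\frac{|n!|}{r_0^n}$. Since $\d$ preserves each summand and $\cO(X)$ carries the maximum norm, $\nor{(\d)^n}$ is the maximum of the blockwise norms, i.e. $\frac{|n!|}{(\min_{0\le i\le\mu}r_i)^n}=\frac{|n!|}{r^n}$.

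For $\Hx$ the strategy is identical once $\Hx$ is described explicitly. If $x$ is of type (3), then $\Hx=\fcouf{c}{r}{r}$ with $r=r(x)$ is a Laurent algebra, and the estimate over $i\in\ZZ$ gives $\nor{(\d)^n}=\frac{|n!|}{r(x)^n}$ directly. If $x$ is of type (2), Proposition~\ref{15} gives $\Hx=E\oplus\fdisf{c}{r}$ with $r=r(x)$, and running the computation on each summand yields the same value. The delicate case is type (4), for which no center lies in $k$; here I would argue by approximation. By Lemma~\ref{sec:berkovich-line-1}, $k[S]$ is dense in $\Hx$. Choosing the Shilov points $x_{c_m,r_m}$ of the defining nested disks, which are of type (2) or (3) with $r_m\downarrow r(x)$ and $x_{c_m,r_m}\to x$, the already-proven bound $|(\d)^nP|_{x_{c_m,r_m}}\le\frac{|n!|}{r_m^n}|P|_{x_{c_m,r_m}}$ passes to the limit by continuity of $y\mapsto|R|_y$, giving $|(\d)^nP|_x\le\frac{|n!|}{r(x)^n}|P|_x$ first for $P\in k[S]$ and then on all of $\Hx$. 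For the reverse inequality I pick $c_m\in k$ with $|S(x)-c_m|\to r(x)=\inf_{a}|S(x)-a|$ (Definition~\ref{sec:berkovich-line-3}); the elements $(S-c_m)^n$ satisfy $(\d)^n(S-c_m)^n=n!$ and $|(S-c_m)^n|_x=|S(x)-c_m|^n$, so the ratios $\frac{|n!|}{|S(x)-c_m|^n}$ tend to $\frac{|n!|}{r(x)^n}$. Alternatively, Proposition~\ref{14} identifies $\Hx\ct_k\Hx$ with the disk algebra $\fdiscf{\Hx}{S(x)}{r(x)}$, under which the left derivation $\d\ot 1$ becomes the coordinate derivative $\dT$; combined with the isometry $\Lk{\Hx}\ito\LL{\Hx}{\Hx\ct_k\Hx}$ of Lemma~\ref{3} this reduces type (4) to the disk case.

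Finally, the spectral norm follows from $\nor{\d}_{Sp}=\Lim_{n\to\infty}\nor{(\d)^n}^{1/n}=\frac1\rho\,\Lim_{n\to\infty}|n!|^{1/n}$, with $\rho=r$ or $\rho=r(x)$. It remains to identify $\Lim_{n\to\infty}|n!|^{1/n}=\omega$: if $\crk=0$ every nonzero integer is a unit, so $|n!|=1$ and the limit is $1=\omega$; if $\crk=p$, then $|n!|=|p|^{v_p(n!)}$ and Legendre's formula gives $v_p(n!)/n\to \frac1{p-1}$, whence the limit is $|p|^{1/(p-1)}=\omega$ as in \eqref{eq:7}. I expect the type (4) case to be the main obstacle: one must produce enough explicit elements of the abstractly defined field $\Hx$ to pin the operator norm from below and to transfer the upper bound from nearby type (2) or (3) points, whereas the rest of the argument rests only on the single estimate $|\binom{i}{n}|\le 1$ and on $\d$ respecting the Mittag--Leffler decompositions.
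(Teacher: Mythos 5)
Your proposal is correct, but it takes a genuinely different route from the paper: for Lemma~\ref{13} the paper gives no internal argument at all and simply cites \cite[Lemma~4.4.1]{and}, whereas you give a self-contained verification using only tools already available in the text. Your single arithmetic observation --- that $(\d)^n$ multiplies monomials by the falling factorial $i(i-1)\cdots(i-n+1)=n!\binom{i}{n}$ with $\binom{i}{n}\in\ZZ$ for every $i\in\ZZ$, so that $|i(i-1)\cdots(i-n+1)|\leq |n!|$ with equality at $i=n$ (and at $j=1$ on principal parts) --- reduces the disk, the connected affinoid (via the Mittag--Leffler decomposition of $\cO(X)$), the type (3) case (Laurent description of $\Hx$) and the type (2) case (Proposition~\ref{15}) to a coefficientwise estimate, using that $\d$ preserves each summand of a max-normed direct sum. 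For type (4), both of your arguments are sound: the approximation $|R|_x=\inf_m|R|_{x_{c_m,r_m}}$ along a nested family of disks combined with density of $k[S]$ (Lemma~\ref{sec:berkovich-line-1}) --- which is exactly the description of type (4) points the paper itself exploits in Proposition~\ref{sec:case-resid-char-1} --- or, more efficiently, the isometry $\Lk{\Hx}\ito\LL{\Hx}{\Hx\ct_k\Hx}$ of Lemma~\ref{3} together with Proposition~\ref{14}, which carries $\d\ot 1$ to the coordinate derivative on $\fdiscf{\Hx}{S(x)}{r(x)}$ and reduces type (4) to the disk case over the ground field $\Hx$. The identification $\Lim_{n\to\infty}|n!|^{1/n}=\omega$ via Legendre's formula (and via $|m|=1$ for integers prime to the residue characteristic) is also correct. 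What each approach buys: the citation keeps the paper short and defers to a reference treating general derivations and radii; your computation keeps the result self-contained and makes visible exactly where $\omega$ and the radii $r$, $r(x)$ enter.

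Two small points to tidy. First, Proposition~\ref{15} is proved in the paper via \cite{Chr} only for non-trivially valued $k$; when $k$ is trivially valued the unique type (2) point has $\Hx=k(T)$ with the trivial norm, where the claim $\nor{(\d)^n}=1=|n!|/r(x)^n$ is immediate, so that case should be dispatched separately rather than through the decomposition. Second, in the density argument for type (4) you should state explicitly that the bound on $k[S]$ shows the polynomial derivative extends to a bounded operator on the completion $\Hx$, and that this extension is the derivation of the statement (uniqueness of continuous extensions); as written you pass from $k[S]$ to $\Hx$ without comment.
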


\begin{proof}
See \cite[Lemma~4.4.1]{and}. 
\end{proof}

\begin{rem}\label{sec:spectr-assoc-diff-2}
  We maintain the assumption that $k$ is algebraically closed. Let $X$
  be an affinoid domain of $\Ak$. Then $X=\bigcup_{i=1}^\mu X_i$, where
  $X_i$ are connected affinoid domains and $X_i\cap X_j=\emptyset$ for
  $i\not=j$. We have $\cO(X)=\bigoplus\limits_{i=1}^\mu\cO(X_i)$. As $\d$
  stabilises each Banach space of the direct sum, we have:
  \[\Nsp{\Lk{\cO(X)}}{\d}=\max_{0\leq i\leq \mu}\Nsp{\Lk{\cO(X_i)}}{\d}.\]
\end{rem}

Let $\Omega\in E(k)$ and let $X$ be an affinoid domain of
$\Ak$. Let $d=f(S)\d$ be a derivation defined on $\cO(X)$. We
can extend it to a derivation 
$d_{\Omega}=f(S)\d$ definedon$\cO(X_\Omega)$. The derivation
$d_{\Omega}$ is the image of $d\ot 1$ by the morphism $\Lk{\cO(X)}\ct_k
\Omega\to \LL{\Omega}{\cO(X_\Omega)}$ defined in Lemma \ref{3}.
  \begin{lem}\label{sec:spectr-assoc-diff-1}
 Let $\pi_{\Omega/k}:X_{\Omega}\to X$ be the canonical projection.  We have: \[\pi_{\Omega/k}(\Sigma_{d_\Omega,\Omega}(\LL{\Omega}{\cO(X_{\Omega})})\subset \Sigma_{d,k}(\Lk{\cO(X)}).\]
\end{lem}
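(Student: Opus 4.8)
The plan is to argue by contraposition and to reduce the statement to the elementary fact that a unital ring homomorphism carries invertible elements to invertible elements. Concretely, fix a point $y$ in $\Sigma_{d_\Omega,\Omega}(\LL{\Omega}{\cO(X_\Omega)})$ and set $x=\pi_{\Omega/k}(y)$; it then suffices to produce a \emph{unital} $k$-algebra homomorphism
\[\Phi:\Lk{\cO(X)}\ct_k\Hx\longrightarrow \LL{\Omega}{\cO(X_\Omega)}\ct_\Omega\Hy\]
sending $d\ot 1-1\ot T(x)$ to $d_\Omega\ot 1-1\ot T(y)$. Indeed, since $d_\Omega\ot 1-1\ot T(y)$ is not invertible by the choice of $y$, the existence of such a $\Phi$ forces $d\ot 1-1\ot T(x)$ to be non-invertible as well, i.e. $x\in\Sigma_{d,k}(\Lk{\cO(X)})$, which is exactly the asserted inclusion. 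This is the same mechanism as in Lemma~\ref{sec:berk-spectr-theory-1}, adapted to the change of base field $k\to\Omega$.

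To build $\Phi$ I would first assemble its two constituents. On the one hand, because $y$ lies over $x$, the seminorm $|.|_y$ on $\Omega[T]$ restricts to $|.|_x$ on $k[T]$; hence $k[T]/\mfp_x\hookrightarrow \Omega[T]/\mfp_y$ is isometric and extends to an isometric field embedding $\iota:\Hx\hookrightarrow\Hy$ over $k$ with $\iota(T(x))=T(y)$. On the other hand, Lemma~\ref{3} provides the multiplicative isometric map $\Lk{\cO(X)}\to\LL{\Omega}{\cO(X_\Omega)}$, $\varphi\mapsto\varphi\ct 1$, which by the very definition of $d_\Omega$ sends $d$ to $d_\Omega$. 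Composing these with the canonical inclusions $u\mapsto u\ot 1$ and $c\mapsto 1\ot\iota(c)$ into $\LL{\Omega}{\cO(X_\Omega)}\ct_\Omega\Hy$ yields two $k$-algebra homomorphisms whose images commute, since the subalgebras $\LL{\Omega}{\cO(X_\Omega)}\ot 1$ and $1\ot\Hy$ commute in the tensor product.

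By the universal property of $\ot_k$ these induce a unital $k$-algebra homomorphism $\Phi_0:\Lk{\cO(X)}\ot_k\Hx\to \LL{\Omega}{\cO(X_\Omega)}\ct_\Omega\Hy$, $\varphi\ot c\mapsto (\varphi\ct 1)\ot\iota(c)$. The key verification is that $\Phi_0$ is contracting, so that it extends to the completion $\Lk{\cO(X)}\ct_k\Hx$: evaluating the tensor norms on simple tensors via Lemma~\ref{1} on both sides gives $\nor{(\varphi\ct 1)\ot\iota(c)}=|\iota(c)|\,\nor{\varphi\ct 1}=|c|\,\nor{\varphi}=\nor{\varphi\ot c}$, and passing to the infimum over representations yields $\nor{\Phi_0(z)}\leq\nor{z}$ for every $z$. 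The resulting bounded $\Phi$ satisfies $\Phi(d\ot 1)=d_\Omega\ot 1$ and $\Phi(1\ot T(x))=1\ot\iota(T(x))=1\ot T(y)$, which is precisely what the reduction in the first paragraph requires.

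The multiplicativity of $\varphi\mapsto\varphi\ct 1$ and of $\iota$, together with unitality of $\Phi$, are routine. The point that needs care is the last paragraph: one must check that the isometry statements of Lemmas~\ref{1} and~\ref{3} combine to make $\Phi_0$ norm non-increasing on the non-completed tensor product (so that it extends at all), and one must cleanly identify the isometric embedding $\iota:\Hx\hookrightarrow\Hy$ of complete residue fields attached to a point $y$ above $x$. I expect this norm bookkeeping, rather than any conceptual difficulty, to be the main obstacle.
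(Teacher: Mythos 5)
Your proof is correct, and it takes a genuinely different route from the paper's. The paper's own argument is two lines: by Lemma~\ref{3} and Lemma~\ref{sec:berk-spectr-theory-1} (bounded morphisms of Banach algebras shrink spectra, both spectra here taken over $\Omega$) one gets $\Sigma_{d_\Omega,\Omega}(\LL{\Omega}{\cO(X_\Omega)})\subset\Sigma_{d\ot 1,\Omega}(\Lk{\cO(X)}\ct_k\Omega)$, and then one cites Berkovich's base-change formula \cite[Proposition~7.1.6]{Ber}, namely $\Sigma_{d\ot 1,\Omega}(\Lk{\cO(X)}\ct_k\Omega)=\pi_{\Omega/k}\-1(\Sigma_{d,k}(\Lk{\cO(X)}))$, and applies $\pi_{\Omega/k}$. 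You never form the $\Omega$-spectrum of $d\ot 1$ and never invoke that base-change result; instead, for each $y\in\Sigma_{d_\Omega,\Omega}$ lying over $x$, you build the isometric embedding $\iota:\Hx\hookrightarrow\Hy$ and a contracting unital $k$-algebra morphism $\Lk{\cO(X)}\ct_k\Hx\to\LL{\Omega}{\cO(X_\Omega)}\ct_\Omega\Hy$ carrying $d\ot 1-1\ot T(x)$ to $d_\Omega\ot 1-1\ot T(y)$, and conclude because a unital homomorphism preserves invertibility. In effect you have inlined a proof of exactly the inclusion of Berkovich's Proposition~7.1.6 that the lemma needs ($\Sigma_{f\ot 1,\Omega}\subset\pi_{\Omega/k}\-1(\Sigma_{f,k})$), specialized to this situation; your norm bookkeeping is sound, since Lemma~\ref{1} applies over the base field $\Omega$ to the $\Omega$-Banach space $\LL{\Omega}{\cO(X_\Omega)}$ and the extension $\Hy\in E(\Omega)$, so $\Phi_0$ is indeed contracting on simple tensors and hence extends to the completed tensor product. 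What the paper's route buys is brevity and a stronger intermediate statement (an equality with a full preimage, not just an inclusion); what your route buys is self-containedness, reducing everything to Lemmas~\ref{1} and~\ref{3} plus elementary algebra, at the cost of constructing $\iota$ and the morphism $\Phi$ by hand.
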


\begin{proof}
By Lemma \ref{3} and \ref{sec:berk-spectr-theory-1} we have
$\Sigma_{d_\Omega,\Omega}(\LL{\Omega}{\cO(X_{\Omega})})\subset
\Sigma_{d\ot 1,\Omega}(\Lk{\cO(X)}\ct_k\Omega)$. Since\\
$\Sigma_{d\ot
  1,\Omega}(\Lk{\cO(X)\ct_k\Omega})=\pi_{\Omega/k}\-1(\Sigma_{d,k}(\Lk{\cO(X)}))$ (see
\cite[Proposition~7.1.6]{Ber}), we obtain the result.
\end{proof}

              \section{Main result}\label{sec:stat-main-result}

This section is divided in two parts. The first one is for the
       computation of the spectrum of $\d$, the second to state and prove the
       main result which is the computation of the spectrum associated
       to a linear differential equation with constant coefficients.  

       \begin{hyp}
         In this section we will suppose that $k$ is algebraically
       closed.
       \end{hyp}
       \subsection{The spectrum of $\d$ defined on several domains}\label{sec:spectrum-d-over}

       Let $X$ be an affinoid domain of $\Ak$ and $x\in\Ak$ be a point
       of type (2), (3) or (4). In this part we compute the spectrum of $\d$ as a derivation of
       $A=\cO(X)$ or $\Hx$  as
       an element of $\Lk{A}$. We
       treat the case of positive residual characteristic separately. We
       will also distinguish the case where $X$ is a closed disk from the case
       where it is a connected affinoid subdomain, and the case where
       $x$ is point of type (4) from the others.

	\subsubsection{The case of positive residual characteristic  }
       We suppose that $\crk=p>0$. In this case $\omega=|p|^{\frac{1}{p-1}}$.

        \begin{pro}\label{5}
		The spectrum of $\d$ as an element of
                $\Lk{\fdisf{c}{r}}$ is: \[{\Sigma_{\d}(\Lk{\fdisf{c}{r}})=\disf{0}{\frac{\omega}{r}}}.\]
	\end{pro}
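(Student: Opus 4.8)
The plan is to compute the spectrum directly from the definition, exploiting the fact that $\fdisf{c}{r}$ is a commutative Banach algebra so that we may instead identify the spectrum with the image of the analytic spectrum, or else work with the resolvent series explicitly. Since the spectrum must be a compact subset of $\Ak$ and its smallest bounding disk centred at $0$ has radius equal to $\nsp{\d}$, Lemma \ref{13} immediately gives that $\nsp{\d}=\frac{\omega}{r}$, so $\Sigma_{\d}\subset\disf{0}{\frac{\omega}{r}}$ by part (2) of Theorem \ref{sec:defin-basic-propr}. The real content is therefore the reverse inclusion: every point of the closed disk $\disf{0}{\frac{\omega}{r}}$ lies in the spectrum.

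First I would observe that, after the translation $T\mapsto T-c$ (an isometric automorphism of $\fdisf{c}{r}$), we may assume $c=0$, so $\d$ acts on $\fdisf{0}{r}$ by $\sum a_i T^i\mapsto \sum i a_i T^{i-1}$. The strategy for the reverse inclusion is to test invertibility of $\d\ot 1-1\ot T(x)$ in $\fdisf{0}{r}\ct_k\Hx$ for the point $x=x_{0,\rho}$ with $\rho\leq\frac{\omega}{r}$, and more generally for points $x$ with $|T(x)|\leq\frac{\omega}{r}$. For a candidate eigenvalue $\lambda$ with $|\lambda|\leq\frac{\omega}{r}$, I would look for a formal solution of $\d(y)=\lambda y$, namely $y=\exp(\lambda T)=\sum_{n\geq 0}\frac{\lambda^n}{n!}T^n$, and compute its radius of convergence. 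The radius of convergence of $\exp(\lambda T)$ is exactly $\frac{\omega}{|\lambda|}$ in residual characteristic $p$ (this is the classical Dwork estimate governing $\omega=|p|^{1/(p-1)}$, consistent with Lemma \ref{13}). Hence $\exp(\lambda T)$ fails to lie in $\fdisf{0}{r}$ precisely when $\frac{\omega}{|\lambda|}\leq r$, i.e. when $|\lambda|\geq\frac{\omega}{r}$; this non-existence of a bounded inverse is what forces such $\lambda$ (and the corresponding point of $\Ak$) into the spectrum.

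More robustly, I would argue via the resolvent and Lemma \ref{sec:defin-basic-propr-1}. For a rational point $a\in k$ with $a\notin\Sigma_{\d}$, that lemma says the largest open disk around $a$ avoiding the spectrum has radius $\nsp{(\d-a)^{-1}}^{-1}$. Computing $(\d-a)^{-1}$ and its spectral norm on $\fdisf{0}{r}$—the inverse being given by integrating against $\exp(aT)$—shows this radius is controlled by the convergence of $\exp(aT)$, and a direct estimate yields that the complement of $\disf{0}{\frac{\omega}{r}}$ is exactly the set of points admitting a bounded resolvent. The key computation is to show that for $|a|<\frac{\omega}{r}$ the operator $\d - a$ is not invertible (its putative inverse is unbounded because solutions of $\d y = a y$ converge on too large a disk), while for $|a|>\frac{\omega}{r}$ it is invertible via a convergent Neumann-type series $\sum_{n\geq 0} a^{-n-1}(\d)^n$, whose convergence in $\Lk{\fdisf{0}{r}}$ follows from $\nor{(\d)^n}=\frac{|n!|}{r^n}$ in Lemma \ref{13} together with $|a|>\frac{\omega}{r}=\nsp{\d}$. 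Combining both inclusions gives $\Sigma_{\d}=\disf{0}{\frac{\omega}{r}}$.

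The main obstacle I anticipate is the boundary case $|\lambda|=\frac{\omega}{r}$, where the naive convergence estimates are borderline: one must verify that points $x$ with $|T(x)|=\frac{\omega}{r}$—including non-rational points of type (2) or (3) on the boundary circle—genuinely belong to the spectrum rather than escaping it. Handling this requires passing to $\fdisf{0}{r}\ct_k\Hx$ rather than arguing only over $k$, and carefully checking that $\d\ot 1-1\ot T(x)$ fails to be invertible there; the subtlety is precisely that, as emphasised in the introduction, $\Lk{\fdisf{0}{r}}\ct_k\Hx$ is not $\LL{\Hx}{\fdisf{0}{r}\ct_k\Hx}$, so one cannot simply read off invertibility from an index or from the base-changed differential operator. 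I expect the cleanest route is to show the spectrum is closed (hence contains the limiting boundary points) and then invoke part (2) of Theorem \ref{sec:defin-basic-propr} to pin the radius at exactly $\frac{\omega}{r}$.
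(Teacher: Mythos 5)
Your upper bound via Lemma~\ref{13} and Theorem~\ref{sec:defin-basic-propr}, and your treatment of interior rational points ($|a|<\frac{\omega}{r}$, where $\exp(a(S-c))\in\fdisf{c}{r}$ gives a kernel element of $\d-a$), agree with the paper. But there is a genuine gap at the boundary $|a|=\frac{\omega}{r}$, and that case is exactly the content that distinguishes positive residual characteristic from residual characteristic zero. First, your sentence claiming that the non-membership of $\exp(\lambda T)$ in $\fdisf{0}{r}$ for $|\lambda|\geq\frac{\omega}{r}$ ``forces such $\lambda$ into the spectrum'' has the logic inverted: when the exponential does not converge you merely lose the kernel element, and no conclusion about invertibility follows. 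Second, your fallback --- ``the spectrum is closed, and part (2) of Theorem~\ref{sec:defin-basic-propr} pins the radius at $\frac{\omega}{r}$'' --- cannot close the gap: the closure in $\Ak$ of the set of interior rational points is $\overline{\diso{0}{\frac{\omega}{r}}}=\diso{0}{\frac{\omega}{r}}\cup\{x_{0,\frac{\omega}{r}}\}$, which is strictly smaller than $\disf{0}{\frac{\omega}{r}}$; it omits every rational point $a$ with $|a|=\frac{\omega}{r}$ and every disk around such a point. Moreover $\overline{\diso{0}{\frac{\omega}{r}}}$ is itself compact and its smallest bounding disk centred at $0$ is $\disf{0}{\frac{\omega}{r}}$, so it satisfies both of your constraints; indeed, when $\crk=0$ this smaller set \emph{is} the spectrum (Proposition~\ref{sec:case-zero-residual}). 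Nothing in your argument can tell the two answers apart. (Your Neumann series for $|a|>\frac{\omega}{r}$ only reproves the upper bound, and Proposition~\ref{sec:berk-spectr-theory-4} does not apply directly to $\d$ since $\Lk{\fdisf{c}{r}}$ is not commutative.)

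What is missing is the paper's key computation: for $|a|=\frac{\omega}{r}$ one proves that $\d-a$ is \emph{not surjective}, by exhibiting an explicit $g=\sum_n b_n(S-c)^n\in\fdisf{c}{r}$ (namely $b_n=\beta^l/\alpha^{p^l-1}$ if $n=p^l-1$, with $|\alpha|=r$, $|\beta|=|p|^{1/2}$, and $b_n=0$ otherwise) whose unique formal solution $f=\sum_n a_n(S-c)^n$ of $(\d-a)f=g$ satisfies $|a_{p^l}|r^{p^l}\to+\infty$, using $|p^l!|=\omega^{p^l-1}$; hence $f\notin\fdisf{c}{r}$ and $a\in\Sigma_{\d}$. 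Since this construction needs scalars of absolute value $r$, $|p|^{1/2}$ and $\frac{\omega}{r}$ in the ground field, the paper first proves the statement over a spherically complete field $\Omega$ with $|\Omega|=\R+$, where density of type (1) points in the \emph{closed} disk together with compactness of the spectrum yields $\discf{\Omega}{0}{\frac{\omega}{r}}\subset\Sigma_{\d}$, and then descends to arbitrary $k$ through the projection $\Pro{k}{\Omega}$ and Lemma~\ref{sec:spectr-assoc-diff-1}. Your proposal contains neither the non-surjectivity argument nor this base-change step, so it establishes the reverse inclusion only up to $\overline{\diso{0}{\frac{\omega}{r}}}$, not the full closed disk.
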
 		
	
	\begin{proof}	
		We set $A=\fdisf{c}{r}$ and $d=\d$. We prove firstly this claim for a field $k$ that is
                 spherically complete  and satisfies
                $|k|=\R+$. By Lemma \ref{13} the spectral norm of
                $d$ is equal to $\nsp{d}=\frac{\omega}{r}$. By
                Theorem~\ref{sec:defin-basic-propr} we have
                $\Sigma_d\subset \disf{0}{\frac{\omega}{r}}$.
                We prove now that $\disf{0}{\frac{\omega}{r}}\subset
                \Sigma_d$.

                Let $x\in \disf{0}{\frac{\omega}{r}}\cap
                k$. Then \[d\ot 1-1\ot T(x)=(d-a)\ot 1\] where
                $T(x)=a\in k$. The element $d\ot 1-1\ot T(x)$ is
                invertible in $\Lk{A}\ct \Hx$ if and only if $d-a$ is
                invertible in $\Lk{A}$ \cite[Lemma~7.1.7]{Ber}. 

                If $|a|<\frac{\omega}{r}$, then
                $\exp(a(S-c))=\sum_{n\in\NN}(\frac{a^n}{n!})(S-c)^n
                $ exists and it is an element of $A$. Hence,
                ${\exp(a(S-c))\in \ker (d-a)}$,
                in particular $d-a$ is not invertible.
                Consequently, $\diso{0}{\frac{\omega}{r}}\cap k\subset \Sigma_d$.
		
		Now we suppose that $|a|=\frac{\omega}{r}$. We prove
                that $d-a$ is not surjective.\\ Let
                ${g(S)=\sum_{n\in\NN}b_n(S-c)^n\in A}$.
                If there exists $f(S)=\sum_{n\in \NN}a_n(S-c)^n\in A$ such that
                ${(d-a)f=g}$,
                then for each $n\in\NN$ we have:
                \begin{equation}
                  \label{eq:8}
                  a_n=\frac{(\sum_{i=0}^{n-1}i!b_ia^{n-1-i})+a^na_0}{n!}.
                \end{equation}

		We now construct a series $g\in A$ such that its
                antecendent $f$ does not converge on the closed disk
                $D^+(c,r)$. Let $\alpha,\beta\in k$, such that $|\alpha|=r$ and
                $|\beta|=|p|^{1/2}$. For $n\in\NN$ we set:
		
		\[ b_n= \begin{cases}
		\frac{\beta^l}{\alpha^{p^l-1}} & \text{if } n=p^l-1
                \text{ with } l\in\NN \\
		0 & \text{otherwise}
		\end{cases} \] 
		Then, $|b_n|r^n$ is either $0$ or $|p|^{\log_p(n+1)}$ and $g\in A$. If we suppose that there exists $f\in A$ such that $(d-a)f=g$ then we have:
		
		\[\forall l\in \NN; \qqq a_{p^l}=\frac{a^{p^l-1}}{(p^l)!}[\sum_{j=0}^l\frac{(p^j-1)!\beta^j}{a^{p^j-1}\alpha^{p^j-1}}+aa_0].\]
		As $|p^l!|=\omega^{p^l-1}$ (cf. \cite[p. 51]{Dwo94}) we have:	
		\[|a_{p^l}|=\frac{1}{r^{p^l-1}}|\sum_{j=0}^{l}\frac{(p^j-1)!\beta^j}{a^{p^j-1}\alpha^{p^j-1} }+aa_0|.\] 
		Since
                $|\frac{(p^j-1)!\beta^j}{a^{p^j-1}\alpha^{p^j-1}}|=|p|^{-j/2}$,
                we have:	
		
		\[|\sum_{j=0}^{l}\frac{(p^j-1)!\beta^j}{a^{p^j-1}\alpha^{p^j-1}}|=\max_{0\leq j\leq l}|p|^{-j/2}=|p|^{-l/2},\]
		therefore $|a_{p^l}|r^{p^l}\xrightarrow{k\rightarrow +\infty}+\infty$, which proves that the power series $f$ is not in $A$ and this is a contradiction.
		Hence, $\disf{0}{\frac{\omega}{r}}\cap k \subset
                \Sigma_d$. As the points of type (1) are dense
                in $\disf{0}{\frac{\omega}{r}}$ and $\Sigma_d$ is
                compact, we deduce that $\disf{0}{\frac{\omega}{r}}\subset\Sigma_d$.
		
		Let us now consider an arbitrary field $k$. Let
                $\Omega\in E(k)$  algebraically closed, spherically
                complete such that $|\Omega|=\R+$. We denote 
                $A_\Omega=\cO(X_\Omega)$ and $d_\Omega=\d$ the
                derivationon$A_\Omega$. From above, we have
                $\Sigma_{d_\Omega}=\discf{\Omega}{0}{\frac{\omega}{r}}$,
                then
                $\Pro{k}{\Omega}(\Sigma_{d_\Omega})=\disf{0}{\frac{\omega}{r}}$. By
                Lemma \ref{sec:spectr-assoc-diff-1} we have
                $\disf{0}{\frac{\omega}{r}}=\Pro{k}{\Omega}(\Sigma_{d_\Omega})\subset
                \Sigma_d$. As $\nsp{d}=\frac{\omega}{r}$, we obtain $\Sigma_d=\disf{0}{\frac{\omega}{r}}$.

	\end{proof}
	
	\begin{rem}
		The statement holds even if the field $k$ is not
                algebraically closed. Indeed, we did not use this
                assumption.
              \end{rem}

	\begin{pro}\label{11}
		Let $X=\disf{c_0}{r_0}\setminus\bigcup_{i=1}^\mu
          \diso{c_i}{r_i}$ be a connected affinoid domain of $\Ak$
          different from the closed disk. The spectrum of $\d$ as an element of
                $\Lk{\cO(X)}$
                is: \[\Sigma_{\d}(\Lk{\cO(X)})=\disf{0}{\frac{\omega}{\Min_{0\leq
                          i\leq \mu} r_i}
                      }.\]
\end{pro}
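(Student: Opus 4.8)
The plan is to deduce the statement from the closed-disk computation of Proposition~\ref{5} by means of the Mittag--Leffler decomposition of $\cO(X)$, the exterior (principal-part) summands being treated on their own. Set $r=\Min_{0\leq i\leq\mu}r_i$; since $r_i\leq r_0$ for every $i$ and $\mu\geq 1$, this minimum is attained at one of the holes. The inclusion $\Sigma_\d(\Lk{\cO(X)})\subset\disf{0}{\frac{\omega}{r}}$ is immediate: Lemma~\ref{13} gives $\nsp{\d}=\frac{\omega}{r}$ in $\Lk{\cO(X)}$, whence Theorem~\ref{sec:defin-basic-propr} forces the spectrum into the closed disk of radius $\frac{\omega}{r}$. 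Thus the whole content lies in the reverse inclusion.

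For that I would write $\cO(X)=\bigoplus_{i=1}^\mu E_i\oplus\fdisf{c_0}{r_0}$, where $E_i$ denotes the space of principal parts $\sum_{j\geq 1}a_{ij}(S-c_i)^{-j}$ appearing in the decomposition. The derivation $\d$ stabilises each summand (it sends negative powers of $S-c_i$ to negative powers, and $\fdisf{c_0}{r_0}$ to itself), so by Lemma~\ref{21} and the equality case of Lemma~\ref{10}(ii), applied inductively to the finitely many summands, one obtains $\Sigma_\d(\Lk{\cO(X)})=\Sigma_\d(\Lk{\fdisf{c_0}{r_0}})\cup\bigcup_{i=1}^\mu\Sigma_\d(\Lk{E_i})$. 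Proposition~\ref{5} identifies the first term with $\disf{0}{\frac{\omega}{r_0}}$, and since the disks $\disf{0}{\frac{\omega}{r_i}}$ are nested, the Proposition will follow once I prove $\Sigma_\d(\Lk{E_i})=\disf{0}{\frac{\omega}{r_i}}$ for each $i$: the union then collapses to $\disf{0}{\frac{\omega}{r}}$.

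To establish this per-summand identity I would argue exactly as in Proposition~\ref{5}. After base change to a spherically complete algebraically closed $\Omega\in E(k)$ with $|\Omega|=\R+$, descending through Lemma~\ref{sec:spectr-assoc-diff-1}, I may assume $k$ is already of this kind. Writing $z=S-c_i$, one has $\d^n(z^{-j})=(-1)^n\frac{(j+n-1)!}{(j-1)!}z^{-(j+n)}$; since $\frac{(j+n-1)!}{(j-1)!}=\binom{j+n-1}{n}n!$ has absolute value $\leq|n!|$, with equality at $j=1$, the operator norm is $\|\d^n\|_{\Lk{E_i}}=\frac{|n!|}{r_i^n}$, so $\nsp{\d}=\frac{\omega}{r_i}$ and $\Sigma_\d(\Lk{E_i})\subset\disf{0}{\frac{\omega}{r_i}}$. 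For the reverse inclusion the decisive structural difference with the disk case is that $E_i$ contains no solution of $f'=af$ (exponentials have no poles), so no eigenvector is available and non-invertibility must be obtained through non-surjectivity of $\d-a$ for every $a\in k$ with $|a|\leq\frac{\omega}{r_i}$. Taking the target $g=z^{-1}\in E_i$, for $a\neq 0$ the unique formal antecedent $f=\sum_{m\geq1}b_m z^{-m}$ has $b_m=(-1/a)^m(m-1)!$, hence $|b_m|r_i^{-m}=\frac{|(m-1)!|}{(|a|r_i)^m}\geq\frac{|(m-1)!|}{\omega^m}=\omega^{-1-s_p(m-1)}$, using the exact valuation $|N!|=\omega^{N-s_p(N)}$ (cf. \cite[p.~51]{Dwo94}) and $|a|r_i\leq\omega$; along $m=p^l$ the digit sum $s_p(m-1)=l(p-1)$ tends to infinity, so $|b_m|r_i^{-m}\to\infty$ and $f\notin E_i$, whence $g$ has no preimage. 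For $a=0$ the image of $\d$ omits the term $z^{-1}$ altogether. Therefore every type~$(1)$ point of $\disf{0}{\frac{\omega}{r_i}}$ lies in the compact set $\Sigma_\d(\Lk{E_i})$, and density of type~$(1)$ points in the closed disk yields $\disf{0}{\frac{\omega}{r_i}}\subset\Sigma_\d(\Lk{E_i})$.

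The point I expect to require the most care is the boundary circle $|a|=\frac{\omega}{r_i}$: there the crude $n$-th root estimate $|(m-1)!|^{1/m}\to\omega$ is inconclusive, and divergence of the antecedent is visible only through the sharp valuation $|N!|=\omega^{N-s_p(N)}$ together with the unboundedness of the base-$p$ digit sum. This factorial estimate, in place of the kernel (exponential) argument available on the disk, is the technical heart of the exterior summands; the rest is the bookkeeping of the Mittag--Leffler decomposition recorded above.
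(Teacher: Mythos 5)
Your proof is correct, and its computational core is the same as the paper's: both rest on the observation that the unique formal preimage of $\frac{1}{S-c_{i}}$ under $\d-a$ has coefficients $b_m=(-1/a)^m(m-1)!$, whose growth, controlled through the exact factorial valuation $|N!|=\omega^{N-s_p(N)}$ (with $s_p$ the base-$p$ digit sum), is incompatible with membership in the principal-part summand once $|a|\leq\omega/r_{i}$. What differs is how this computation is converted into the inclusion $\disf{0}{\frac{\omega}{\min_ir_i}}\subset\Sigma_{\d}$. The paper fixes an arbitrary point $x$ of the disk (of any type), pushes $\d\ot1-1\ot T(x)$ through the morphism $\Lk{\cO(X)}\ct_k\Hx\to\LL{\Hx}{\cO(X_{\Hx})}$ of Lemma~\ref{3}, applies the Mittag--Leffler decomposition over $\Hx$, and runs the very same recursion with the scalar $a=T(x)\in\Hx$ on the single summand attached to the smallest hole; non-surjectivity there, Lemma~\ref{10}, and the fact that bounded algebra morphisms preserve invertibility then give $x\in\Sigma_{\d}$ directly. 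Since every point is tested at once, no density-plus-compactness step and no auxiliary spherically complete field are needed---which is exactly why the paper's proof of Proposition~\ref{11} is so much shorter than that of Proposition~\ref{5}. Your route instead decomposes $\cO(X)$ over $k$ and computes the spectrum of every summand, thereby obtaining the finer facts $\Sigma_{\d}(\Lk{E_i})=\disf{0}{\frac{\omega}{r_i}}$ and $\|\d^n\|_{\Lk{E_i}}=|n!|/r_i^n$, which the paper never isolates, at the price of re-importing the whole architecture of Proposition~\ref{5} for the lower bound: reduction to a spherically complete algebraically closed $\Omega$ with $|\Omega|=\R+$, inversion only at rational points, density of type (1) points plus compactness of the spectrum, then descent. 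This works, but one step needs to be stated with care: Lemma~\ref{sec:spectr-assoc-diff-1} is formulated for derivations on $\cO(X)$ with $X$ affinoid, whereas your $E_i$, endowed with the operator $\d|_{E_i}$, is not of that form (under $z=(S-c_i)^{-1}$ it becomes a space of functions on a disk, but the operator becomes $-z^2\frac{d}{dz}$, not a constant-coefficient derivation). The proof of that lemma---Lemma~\ref{3} combined with \cite[Proposition~7.1.6]{Ber}---does carry over verbatim to any bounded operator on any $k$-Banach space, but you should either state this extension explicitly or restructure so that the descent is performed once on $\cO(X)$ itself (prove the full statement over $\Omega$, then descend), where the lemma applies exactly as written.
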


\begin{proof}
               We set $A=\cO(X)$ and $d=\d$. The spectral norm of $d$ is equal to
                $\nsp{d}=\frac{\omega}{\min\limits_{0\leq i\leq \mu}r_i}$
                (cf. Lemma \ref{13}), which implies $\Sigma_d\subset
            \disf{0}{\frac{\omega}{\min\limits_{0\leq i\leq \mu}r_i}}$. Now, let $x\in
                \disf{0}{\frac{\omega}{\min\limits_{0\leq i\leq \mu}r_i}}$. We set
                    $A_{\Hx}=\cO(X_{\Hx})$ and $d_{\h{x}}=\d:\;
                    A_{\Hx}\to A_{\Hx}$. From Lemma
                    \ref{3} we have the bounded morphism:
                    \[\Lk{A}\Ct{k}\h{x}\rightarrow\LL{\h{x}}{A_{\h{x}}}.\]
                   The image of $d$ by this morphism is the derivation
                    $d_{\h{x}}$. By the Mittag-Leffler decomposition
                    \cite[Proposition~2.2.6]{Van}, we have:
                    \[\cO(X_{\Hx})=\bigoplus\limits_{i=1}^n\{\sum_{j\in
                        \NN^*} \dfrac{a_{ij}}{(S-c_i)^j}|\; a_{ij}\in \Hx,\;
                      \lim\limits_{j\to +\infty}|a_{ij}|r_i^{-j}= 0 \}\oplus \fdiscf{\Hx}{c_0}{r_0}.\]
		Each Banach space of the direct sum above is stable under $d_{\Hx}$.\\
		We set $F_i=\{\sum_{j\in \NN^*}
                \dfrac{a_{ij}}{(S-c_i)^j}|\; a_{ij}\in \Hx,\; \lim\limits_{j\to +\infty}|a_{ij}|r_i^{-j}=0 \}$, and $d_i=d_{\Hx|_{F_i}}$. By Lemma \ref{10}, $\Sigma_{d_\Hx}=\bigcup \Sigma_{d_i}$.
		Let $i_0>0$ be the index such that $r_{i_0}=\min\limits_{0\leq
                  i\leq \mu}r_i$. We will prove that $d_{i_0}-T(x)$ is
                not surjective. Indeed, let $g(S)=\sum_{n\in
                  \NN^*}\frac{b_n}{(S-c_{i_0})^{n}}\in F_{i_0}$,  if
                there exists $f(S)=\sum_{n\in
                  \NN^*}\frac{a_n}{(S-c_{i_0})^{n}}\in F_{i_0}$ such
                that $(d_{i_0}-T(x))f(S)=g(S)$, then for each
                $n\in\NN^*$ we have:
		
		\[a_n=\frac{(n-1)!}{(-T(x))^n}\sum_{i=1}^n\frac{(-T(x))^{i-1}}{(i-1)!}b_i.\]
                We choose $g(S)=\frac{1}{S-c_{i_0}}$, in this case
                $a_n=\frac{(n-1)!}{(-T(x))^n}$ and
                $|a_n|=\frac{|(n-1)!|}{|T(x)|^n}$. As
                $|T(x)|\leq\frac{\omega}{r_{i_0}}$, the sequence $|a_n|r_{i_0}^{-n}$
                diverges. We obtain contradiction since $f\in F_{i_0}$. Hence, $d_{\Hx}- T(x)$ is not invertible,
                which implies that $d\ot 1-1\ot T(x)$ is not
                invertible
                and we obtain the result.

              \end{proof}

        \begin{cor}\label{sec:case-posit-resid}
          Let $X$ be an affinoid domain of $\Ak$. The spectrum of $\d$ as an element of
                $\Lk{\cO(X)}$ is: \[\Sigma_{\d}(\Lk{\cO(X)})=\disf{0}{\nsp{\d}}.\]

        \end{cor}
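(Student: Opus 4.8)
The plan is to reduce the general affinoid case to the two connected cases already settled in Propositions~\ref{5} and~\ref{11}, by splitting $X$ into its connected components. First I would invoke Remark~\ref{sec:spectr-assoc-diff-2}: since $X$ is an affinoid domain of $\Ak$, it is a finite disjoint union $X=\bigcup_{i=1}^\mu X_i$ of connected affinoid domains, and the induced decomposition $\cO(X)=\bigoplus_{i=1}^\mu\cO(X_i)$ (with the max norm on the direct sum) is such that $\d$ stabilises each closed Banach subspace $\cO(X_i)$. The same remark records that $\Nsp{\Lk{\cO(X)}}{\d}=\max_{1\leq i\leq\mu}\Nsp{\Lk{\cO(X_i)}}{\d}$, which I will need at the very end.

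Next I would compute the spectrum summand by summand. Because $\d$ stabilises every $\cO(X_i)$, I can apply the equality case of Lemma~\ref{10} (part ii, valid since $\d$ preserves each factor) inductively over the finitely many components to get
\[
\Sigma_{\d}(\Lk{\cO(X)})=\bigcup_{i=1}^\mu \Sigma_{\d}(\Lk{\cO(X_i)}).
\]
Each $X_i$ is either a closed disk or a connected affinoid domain distinct from a disk, so Propositions~\ref{5} and~\ref{11} identify each $\Sigma_{\d}(\Lk{\cO(X_i)})$ with a closed disk $\disf{0}{\omega/r^{(i)}}$ centred at $0$, where $r^{(i)}$ is the minimal radius occurring in the description of $X_i$. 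By Lemma~\ref{13} this radius is precisely $\Nsp{\Lk{\cO(X_i)}}{\d}$, so $\Sigma_{\d}(\Lk{\cO(X_i)})=\disf{0}{\Nsp{\Lk{\cO(X_i)}}{\d}}$ in every case.

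Finally I would assemble the pieces: a finite union of closed disks all centred at $0$ is the single closed disk whose radius is the largest of the individual radii, whence
\[
\Sigma_{\d}(\Lk{\cO(X)})=\disf{0}{\,\max_{1\leq i\leq\mu}\Nsp{\Lk{\cO(X_i)}}{\d}\,}.
\]
The last assertion of Remark~\ref{sec:spectr-assoc-diff-2} identifies this maximum with $\Nsp{\Lk{\cO(X)}}{\d}=\nsp{\d}$, giving $\Sigma_{\d}(\Lk{\cO(X)})=\disf{0}{\nsp{\d}}$. No step here is a genuine obstacle; the only points that require care are verifying the hypotheses of Lemma~\ref{10} (that $\d$ preserves both factors at each stage of the induction, so that the \emph{equality} holds) and the elementary but essential observation that the disks are concentric, so their union collapses to the disk of maximal radius, which Remark~\ref{sec:spectr-assoc-diff-2} matches to the global spectral radius.
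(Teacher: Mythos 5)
Your proposal is correct and follows essentially the same route as the paper: decompose $X$ into connected components, use the stability of each $\cO(X_i)$ under $\d$ together with Lemma~\ref{10} to split the spectrum, identify each piece via Propositions~\ref{5} and~\ref{11}, and collapse the concentric disks to the one of radius $\max_i\Nsp{\Lk{\cO(X_i)}}{\d}=\nsp{\d}$ by Remark~\ref{sec:spectr-assoc-diff-2}. If anything, your write-up is slightly more careful than the paper's, which cites only Proposition~\ref{11} and leaves the induction on components and the equality case of Lemma~\ref{10} implicit.
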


        \begin{proof}
          In this case we may write $X=\bigcup_{i=1}^uX_i$, where the $X_i$ are
          connected affinoid domain of $\Ak$ such that $X_i\cap
          X_j=\emptyset$ for $i\not= j$. We have:
          \[\cO(X)=\bigoplus\limits_{i=1}^u\cO(X_i).\]
          Each Banach space of the direct sum above is stable under
          $d$, we denote by $d_i$ the restriction of $d$ to
          $\cO(X_i)$. We have $\nsp{d}=\max\limits_{1\leq i\leq u}d_i$
          (cf. Remark~\ref{sec:spectr-assoc-diff-2}). By  Lemma
          \ref{10} and Proposition \ref{11} we have
          $\Sigma_d=\bigcup_{i=1}^u\disf{0}{\nsp{d_i}}=\disf{0}{\max_i\nsp{d_i}}$. Hence,
            we obtain the result. 
        \end{proof}

	\begin{pro}\label{sec:case-posit-char}
		Let $x\in \Ak$ be a point of type (2), (3) or (4). The spectrum of $\d$ as an element of
                $\Lk{\Hx}$ is: \[\Sigma_{\d}(\Lk{\Hx})=\disf{0}{\frac{\omega}{r(x)}}.\]
Where $r(x)$ is the value defined in Definition~\ref{sec:berkovich-line-3}.
	\end{pro}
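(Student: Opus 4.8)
The plan is to prove the two inclusions separately. The upper bound is immediate: by Lemma~\ref{13} the spectral seminorm of $\d$ on $\Lk{\Hx}$ equals $\nsp{\d}=\frac{\omega}{r(x)}$, so Theorem~\ref{sec:defin-basic-propr}(2) forces $\Sigma_{\d}(\Lk{\Hx})\subset\disf{0}{\frac{\omega}{r(x)}}$. All the work lies in the reverse inclusion $\disf{0}{\frac{\omega}{r(x)}}\subset\Sigma_{\d}(\Lk{\Hx})$, and for this I will reduce, in each of the three types, to the disk computation already carried out in Proposition~\ref{5}, which asserts that $\d$ acting on a disk algebra of radius $\rho$ has spectrum the \emph{full} closed disk $\disf{0}{\frac{\omega}{\rho}}$ (this is where the delicate boundary argument of Proposition~\ref{5} is crucial, as it is what produces a closed disk rather than the closure of an open one).

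For $x=x_{c,r}$ of type (2) or (3) (so $r(x)=r$ in both cases) I will exhibit the disk algebra of radius $r(x)$ as a $\d$-stable direct summand of $\Hx$. In the type (2) case Proposition~\ref{15} gives the Mittag--Leffler decomposition $\Hx=E\oplus\fdisf{c}{r}$; since $\d$ visibly preserves both the space $E$ of principal parts and the disk algebra $\fdisf{c}{r}$, this is a decomposition of $k$-Banach spaces stable under $\d$. In the type (3) case $\Hx=\fcouf{c}{r}{r}$, and splitting a Laurent series $\sum_{n\in\ZZ}a_n(S-c)^n$ into its parts of nonnegative and of negative degree yields an analogous $\d$-stable splitting $\Hx=F_-\oplus\fdisf{c}{r}$, with $F_-$ the space of principal parts. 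In both situations Lemma~\ref{10}(ii) applies with both summands stable under $\d$, giving the equality $\Sigma_{\d}(\Lk{\Hx})=\Sigma_{\d}(\Lk{F})\cup\Sigma_{\d}(\Lk{\fdisf{c}{r}})$ with $F\in\{E,F_-\}$, and Proposition~\ref{5} identifies the second factor as $\disf{0}{\frac{\omega}{r}}$. Together with the upper bound this already gives the claimed equality, with no need to compute the spectrum on the principal-part factor.

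The remaining, and most delicate, case is $x$ of type (4), where $\Hx$ admits no such decomposition; here I will use base change by $\Hx$ itself. Proposition~\ref{14} provides an isometric isomorphism of $\Hx$-Banach algebras between $\Hx\ct_k\Hx$ and the disk algebra $\fdiscf{\Hx}{S(x)}{r(x)}$ over $\Hx$, carrying the centred coordinate $v$ to $S\ot 1-1\ot S$. Since $\d\ot 1$ sends $S\ot 1\mapsto 1$ and annihilates $1\ot\Hx$, it acts on the right-hand side exactly as the derivative $\frac{d}{dv}$ over the coefficient field $\Hx$. Applying Proposition~\ref{5} over the complete field $\Hx$ (legitimate by the Remark following it, and with the same constant $\omega$ since $\tilde{\mathscr H(x)}$ has the same characteristic as $\tilde k$) computes $\Sigma_{d/dv}$ as the closed disk $\discf{\Hx}{0}{\frac{\omega}{r(x)}}$ of $\A{\Hx}$. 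Lemma~\ref{3} together with Lemma~\ref{sec:berk-spectr-theory-1} gives the inclusion $\Sigma_{d/dv}\subset\Sigma_{\d\ot 1}(\Lk{\Hx}\ct_k\Hx)$, while \cite[Proposition~7.1.6]{Ber} identifies the latter with $\pi_{\Hx/k}^{-1}(\Sigma_{\d}(\Lk{\Hx}))$. Pushing forward along the surjection $\pi_{\Hx/k}$, and using that $\pi_{\Hx/k}(\discf{\Hx}{0}{\frac{\omega}{r(x)}})=\disf{0}{\frac{\omega}{r(x)}}$ because the centre $0$ is $k$-rational, yields $\disf{0}{\frac{\omega}{r(x)}}\subset\Sigma_{\d}(\Lk{\Hx})$, hence equality.

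The main obstacle is precisely this type (4) argument. One must verify carefully that $\d\ot 1$ corresponds to the disk derivative $\frac{d}{dv}$ under the identification of Proposition~\ref{14}, that Proposition~\ref{5} may indeed be invoked over the auxiliary base field $\Hx$ with the \emph{same} value of $\omega$, and above all that the chain of inclusions coming from base change (Lemmas~\ref{3} and~\ref{sec:berk-spectr-theory-1}) and from the projection $\pi_{\Hx/k}$ is oriented in the correct direction, so that the closed disk genuinely descends into $\Sigma_{\d}(\Lk{\Hx})$. By contrast the type (2) and (3) cases are comparatively soft once the $\d$-stable decomposition is in place.
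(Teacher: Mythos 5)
Your proposal is correct, and for two of the three cases it coincides with the paper's own proof. The upper bound (Lemma~\ref{13} plus Theorem~\ref{sec:defin-basic-propr}) is the paper's; the type (2) case (Mittag--Leffler splitting $\Hx=E\oplus\fdisf{c}{r(x)}$, $\d$-stability, Lemma~\ref{10}, Proposition~\ref{5}) is the paper's; and your type (4) argument is also the paper's, since the chain Proposition~\ref{14}, Lemma~\ref{3}, Lemma~\ref{sec:berk-spectr-theory-1}, \cite[Proposition~7.1.6]{Ber} followed by pushing forward along $\pi_{\Hx/k}$ is precisely the content of Lemma~\ref{sec:spectr-assoc-diff-1}, which the paper invokes by name. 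The one place where you genuinely depart from the paper is type (3). There the paper notes that $\Hx=\fcouf{c}{r(x)}{r(x)}$ is the affinoid algebra of a circle, i.e.\ a connected affinoid domain of the form \eqref{eq:2}, and simply invokes Proposition~\ref{11}, whose lower bound rests on a coefficient estimate showing that $\d-T(x)$ fails to be surjective on the principal-part factor (and which passes through a base change to $\Hx$ in order to reach non-rational points of the spectrum). You instead rerun the type (2) mechanism: split the circle algebra by Mittag--Leffler as (principal parts) $\oplus\,\fdisf{c}{r(x)}$, observe that both summands are $\d$-stable, and read off the full closed disk $\disf{0}{\frac{\omega}{r(x)}}$ from the disk summand via Proposition~\ref{5} and Lemma~\ref{10}. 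This is valid and arguably more uniform: types (2) and (3) become one soft argument, with no base change and no coefficient computation, the closedness of the disk being inherited from the boundary analysis already done in Proposition~\ref{5}. Note, however, that your shortcut works only because the circle has equal inner and outer radii: for a general affinoid $\disf{c_0}{r_0}\setminus\bigcup_i\diso{c_i}{r_i}$ with some $r_i<r_0$, the disk summand only yields $\disf{0}{\frac{\omega}{r_0}}$, which is strictly smaller than the spectrum, so the principal-part estimate of Proposition~\ref{11} cannot be dispensed with in general --- which is presumably why the paper, having proved it anyway, reuses it for type (3).
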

	
	\begin{proof}
          We set $d=\d$. We distinguish three cases: 
		\begin{itemize}
			\item\textbf{$x$ is point a of type (2):} Let
                          $c\in k$ such that $x=x_{c,r(x)}$. By Proposition \ref{15}, we have:\[\Hx=E\oplus \fdisf{c}{r(x)}.  \]
			As both  $E$ and $\fdisf{c}{r(x)}$ are stable
                        under $d$,  by Lemma \ref{10}  we have\\
                        ${\Sigma_d=\Sigma_{d_{|_{E}}}\cup
                          \Sigma_{d_{|_{\fdisf{c}{r(x)}  }}}}$. By
                        Proposition \ref{5}
                        ${\Sigma_{d_{|_{\fdisf{c}{r(x)}
                              }}}=\disf{0}{\frac{\omega}{r(x)}}}$. \\Since
                        ${\nsp{d}=\frac{\omega}{r(x)}}$ (cf. Lemma \ref{13}), then $\Sigma_d=\disf{0}{\frac{\omega}{r(x)}}$.\\
			
			\item\textbf{$x$ is point a of type (3):} Let
                          $c\in k$ such that $x=x_{c,r(x)}$. In this
                          case ${\Hx=\fcouf{c}{r(x)}{r(x)}}$. By
                          Proposition~\ref{11} we obtain the result.\\

			\item\textbf{$x$ is point a of type (4):} By
                          Proposition \ref{14} we have $\Hx\ct_k
                          \Hx\simeq \fdiscf{\Hx}{S(x)}{r(x)}$.
                          From Lemma \ref{3} we have the bounded
                          morphism:
                          \[\Lk{\Hx}\ct_k\Hx\to
                            \LL{\Hx}{\fdiscf{\Hx}{S(x)}{r(x)}}.\]
                          The image of $d\ot 1$ by this morphism is the derivation
                          $d_{\Hx}=\d:\fdiscf{\Hx}{S(x)}{r(x)}\to \fdiscf{\Hx}{S(x)}{r(x)}$.
                         
                          From Proposition \ref{5} we have
                          ${\Sigma_{d_{\Hx}}=\discf{\Hx}{0}{\frac{\omega}{r(x)}}}$,
                          then $\Pro{k}{\Hx}(\Sigma_{d_{\Hx}})=\disf{0}{\frac{\omega}{r(x)}}$. By
                          Lemma \ref{sec:spectr-assoc-diff-1} we  have
                          $\disf{0}{\frac{\omega}{r(x)}}\subset
                          \Sigma_d$. Since
                          $\nsp{d}=\frac{\omega}{r(x)}$ (cf. Lemma
                          \ref{13}), we obtain
                          $\Sigma_d=\disf{0}{\frac{\omega}{r(x)}}$
                          (cf. Theorem \ref{sec:defin-basic-propr}).
		\end{itemize}
		
	\end{proof}

        \subsubsection{ The case of  residual characteristic zero}

        We suppose that $\crk=0$.

        \begin{pro}\label{sec:case-zero-residual}
         The spectrum of $\d$ as an element of
                $\Lk{\fdisf{c}{r}}$ is: \[\Sigma_{\d}(\Lk{\fdisf{c}{r}})=\overline{\diso{0}{\frac{1}{r}}}\]
         (the topological closure of $\diso{0}{\frac{1}{r}}$).
             
          \end{pro}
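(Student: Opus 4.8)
The plan is to follow the strategy of Proposition~\ref{5}, the difference being that residual characteristic $0$ forces $\omega=1$ and $|n!|=1$ for all $n$, which is exactly what changes the boundary behaviour. First, by Lemma~\ref{13} the spectral norm is $\nsp{\d}=\omega/r=1/r$, so Theorem~\ref{sec:defin-basic-propr} gives the a priori inclusion $\Sigma_{\d}\subset\disf{0}{1/r}$. It then remains to decide which points of this closed disk belong to the spectrum. The target is the set identity $\overline{\diso{0}{1/r}}=\disf{0}{1/r}\setminus\bigcup_{|a|=1/r}\diso{a}{1/r}$ (the closure of the open disk is the open disk together with its Shilov point $x_{0,1/r}$, obtained from the closed disk by deleting all boundary residue disks), so I will show that the open disk and $x_{0,1/r}$ lie in $\Sigma_{\d}$ while each $\diso{a}{1/r}$ with $|a|=1/r$ is excluded.

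For $\overline{\diso{0}{1/r}}\subset\Sigma_{\d}$ I argue as in Proposition~\ref{5}: for $a\in k$ with $|a|<1/r$ the series $\exp(a(S-c))=\sum_n\frac{a^n}{n!}(S-c)^n$ has $n$\nbh th term of norm $|a|^n r^n$ (since $|n!|=1$), hence converges in $A=\fdisf{c}{r}$ and spans $\ker(\d-a)$; thus $\d-a$ is not invertible and the type\nbh $(1)$ point $a$ lies in $\Sigma_{\d}$. Since these points are dense in $\diso{0}{1/r}$ and $\Sigma_{\d}$ is closed, its closure $\overline{\diso{0}{1/r}}$ is contained in $\Sigma_{\d}$ (when $k$ is too small for this density one passes to a spherically complete $\Omega\supseteq k$ with $|\Omega|=\RR_+$ and descends through $\pi_{\Omega/k}$ via Lemma~\ref{sec:spectr-assoc-diff-1}, as in Proposition~\ref{5}).

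The decisive step, and the main obstacle, is showing the boundary residue disks are spectrum\nbh free. Fix $a\in k$ with $|a|=1/r$ and solve $(\d-a)f=g$ coefficientwise as in \eqref{eq:8}: writing $f=\sum a_n(S-c)^n$ and $g=\sum b_n(S-c)^n$ one finds $a_n=\frac{a^n}{n!}\big(a_0+\sum_{i<n}\frac{i!\,b_i}{a^{i+1}}\big)$. Because $|a^n/n!|=|a|^n=r^{-n}$, convergence of $f$ in $A$ forces the unique choice $a_0=-\sum_{i\ge0}\frac{i!\,b_i}{a^{i+1}}$ (this series converges, as $|i!\,b_i/a^{i+1}|=r\,(|b_i|r^i)\to0$), and then $|a_n|r^n=\big|\sum_{i\ge n}\frac{i!\,b_i}{a^{i+1}}\big|\le r\nor{g}$. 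Hence $\d-a$ is bijective with $\nor{(\d-a)\-1}\le r$; combined with $\ker(\d-a)=0$ (indeed $\exp(a(S-c))\notin A$ when $|a|=1/r$) this gives $a\notin\Sigma_{\d}$ and $\nsp{(\d-a)\-1}\le r$. Lemma~\ref{sec:defin-basic-propr-1} then forces the largest $\Sigma_{\d}$\nbh free open disk centred at $a$ to have radius $\ge 1/r$, so $\diso{a}{1/r}\cap\Sigma_{\d}=\emptyset$. Deleting all these disks from $\disf{0}{1/r}$ leaves exactly $\overline{\diso{0}{1/r}}$, which together with the previous paragraph yields the equality. The crucial phenomenon is that $|n!|=1$ keeps $\nor{(\d-a)\-1}$ finite (equal to $r$) on the boundary circle, whereas the decay of $|n!|$ in residual characteristic $p$ is precisely what places the entire boundary inside the spectrum there.
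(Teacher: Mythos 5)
Your proof is correct, and on the decisive point it takes a genuinely different route from the paper. The two easy parts agree in substance: the inclusion $\Sigma_{\d}\subset\disf{0}{\frac{1}{r}}$ via Lemma~\ref{13} and Theorem~\ref{sec:defin-basic-propr}, and the inclusion $\overline{\diso{0}{\frac{1}{r}}}\subset\Sigma_{\d}$ via exponential kernels (the paper treats non-rational points $x$ of the open disk directly, exhibiting $\exp(T(x)(S-c))$ in $\fdisf{c}{r}\ct_k\Hx$, while you use density of type (1) points plus the fallback to a spherically complete $\Omega$ as in Proposition~\ref{5}; both work). The real divergence is in excluding the boundary residue disks $\diso{a}{\frac{1}{r}}$, $|a|=\frac{1}{r}$. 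The paper rescales to $r=1$, chooses by Zorn's lemma a maximal trivially valued subfield $k'\subset k$, settles the trivially valued case by hand on $k[S-c]$ (where the inverting sums are finite), and then descends through $\piro{k}{k'}$ using Proposition~\ref{sec:berkovich-line} and \cite[Proposition~7.1.6]{Ber}. You instead invert $\d-a$ directly on $\fdisf{c}{r}$: since $\crk=0$ gives $|n!|=1$, the recursion \eqref{eq:8} forces the unique admissible value of $a_0$, yields the tail formula $a_n=\frac{-a^{n-1}}{n!}\sum_{i\geq n}i!\,b_i a^{-i}$ and the explicit bound $\nor{(\d-a)\-1}\leq r$, after which Lemma~\ref{sec:defin-basic-propr-1} removes $\diso{a}{\frac{1}{r}}$ from the spectrum. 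This is precisely the computation the paper defers to Proposition~\ref{sec:case-resid-char-1} (equations \eqref{eq:9}--\eqref{eq:11}, for points of type (4)); deploying it here makes the proof shorter and self-contained: no Zorn's lemma, no trivially-valued versus non-trivially-valued or $r\in|k^*|$ case distinctions, no analysis of fibers of $\piro{k}{k'}$, and the inverse is bounded by an explicit estimate, which matters because the open mapping theorem is unavailable over a trivially valued $k$. What the paper's longer route buys is the illustration and reuse of its descent formalism (Lemma~\ref{3}, Proposition~\ref{sec:berkovich-line}), which it needs elsewhere anyway. Two small corrections: you only establish $\nsp{(\d-a)\-1}\leq\nor{(\d-a)\-1}\leq r$, not the equality your closing parenthesis asserts (the inequality suffices); and the set identity $\disf{0}{\frac{1}{r}}\setminus\bigcup_{|a|=\frac{1}{r}}\diso{a}{\frac{1}{r}}=\overline{\diso{0}{\frac{1}{r}}}$ uses that $k$, hence $\tilde{k}$, is algebraically closed --- the standing hypothesis of the section --- and it degenerates correctly when $\frac{1}{r}\notin|k^*|$, since the union is then empty and the closed disk already coincides with the closure of the open one.
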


          \begin{proof}
           We set $A=\fdisf{c}{r}$ and $d=\d$. The spectral norm of $d$ is equal to
            $\nsp{d}=\frac{1}{r}$ (cf. Lemma~\ref{13}), which implies that $\Sigma_d\subset
            \disf{0}{\frac{1}{r}}$ (cf. Theorem~\ref{sec:defin-basic-propr}).
            For $a\in \diso{0}{\frac{1}{r}}\cap k$, $d-a$ is not
                    injective, therefore $a\in \Sigma_d$. Let $x\in
                    \diso{0}{\frac{1}{r}}\setminus k$.

                    We set
                    $A_{\h{x}}=A\Ct{k}\h{x}=\fdiscf{\h{x}}{c}{r}$ and
                    $d_{\h{x}}=\d:A_{\Hx}\to A_{\Hx}$. From Lemma
                    \ref{3} we have the bounded morphism:
                    \[\Lk{A}\Ct{k}\h{x}\rightarrow\LL{\h{x}}{A_{\h{x}}}\]
The derivation
                    $d_{\h{x}}$ is the image of $d\ot 1$ by this morphism. As $|T(x)(S-c)|<1$, $f=\exp
                    (T(x)(S-c))$ exists and it is an element of
                    $A_{\Hx}$ . As  $f\in
                    \Ker(d_{\h{x}}-T(x))$,  $d_{\h{x}}-T(x)$ is
                    not invertible. Therefore $d\ot 1-1\ot T(x)$ is
                    not invertible which is equivalent to saying that
                    $x\in \Sigma_d$. By  compactness of the
                    spectrum we have $\overline{\diso{0}{\frac{1}{r}}} \subset
                    \Sigma_d$. In order to end the proof, we need to prove firstly the statement 
                    for the case where $k$ is trivially valued.
\begin{itemize}
\item \textbf{Trivially valued case:}
 We need to distinguish  two cases:
            \begin{itemize}
              
              \item $\b{r\not=1:}$ In this case we have
                $\disf{0}{\frac{1}{r}}=\overline{\diso{0}{\frac{1}{r}}}$,
                hence $\Sigma_d=\overline{\diso{0}{\frac{1}{r}}}$.
                
                \item $\b{r=1:}$ In this case we have
                  $\fdisf{c}{1}=k[S-c]$ equipped with the trivial
                  valuation, and $\Lk{k[S-c]}$ is the
                  $k$-algebra of all $k$-linear maps equipped with the
                  trivial norm (i.e. $\nor{\phi}=1$ for all $\phi\in {\Lk{k[S-c]}\setminus\{0\}}$). Let $a\in
                    k\setminus\{0\}$. Since the power series
                    $\exp(a(S-c))=\sum_{n\in
                      \NN}\frac{a^n}{n!}(S-c)^n$ does not converge in
                    $\fdisf{c}{1}$, the operator
                    $d-a:k[S-c]\rightarrow k[S-c]$ is injective. It is
                    alos surjective. Indeed, let
                    $g(S)=\sum_{n=0}^mb_n(S-c)^n\in \fdisf{c}{1}$. The
                    polynomial $f(S)=\sum_{n=0}^{m}a_n(S-c)^n\in
                    \fdisf{c}{1}$ that its coefficients verifies 
\[a_n=\frac{-a^{n-1}}{n!}\sum_{i=n}^m i!b_i a^{-i}.\]
                     for all $0\leq n\leq m$, satisfies
                     $(d-a)f=g$. Hence, $d-a$ is invertible in
                     $\Lk{k[S-c]}$. Since the norm is trivial on
                     $\Lk{k[S-c]}$, we have 
                    $\nsp{(d-a)\-1}\-1=1$. Therefore, by Lemma \ref{sec:defin-basic-propr-1}
                   , for all $x\in
                    \diso{a}{1}$ the element $d\ot 1-1\ot T(x)$ is
                    invertible. Consequently, for all $a\in k\setminus\{0\}$ the disk $D^-(a,1)$ is not meeting the spectrum $\Sigma_d$. This means that $\Sigma_d$ is contained in $D^+(0,1)\setminus\bigcup_{a\in k\setminus\{0\}}D^-(a,1)=[0,x_{0,1}]$. Since $[0,x_{0,1}]=\overline{D^-(0,1)}$ we have $\Sigma_d=\overline{\diso{0}{1}}$.

                      \end{itemize}
    \item \textbf{Non-trivially valued case:}
               We need to
            distinguish  two cases:
            \begin{itemize}
           \item $\b{r\notin|k^*|:}$ In this case we have
                $\disf{0}{\frac{1}{r}}=\overline{\diso{0}{\frac{1}{r}}}$,
                hence $\Sigma_d=\overline{\diso{0}{\frac{1}{r}}}$.

            \item $\b{r\in |k^*|:}$ 
              We can reduce our case to
              $r=1$. Indeed, there exists an isomorphism of $k$-Banach
              algebras
              \[\fdisf{c}{r}\to \fdisf{c}{1},\]
              that associates to $S-c$ the element $\alpha(S-c)$, with $\alpha\in k$
              and $|\alpha|=r$. This induce an isomorphism of
              $k$-Banach algebras
              \[\Lk{\fdisf{c}{r}}\to \Lk{\fdisf{c}{1}},\]
              which associates to $d:\fdisf{c}{r}\to \fdisf{c}{r}$
              the derivation $\fra{\alpha}\cdot\d:\fdisf{c}{1}\to
              \fdisf{c}{1}$. By Lemmas \ref{sec:berk-spectr-theory-1}
              and~\ref{sec:berk-spectr-theory} we obtain
              \[\Sigma_d(\Lk{\fdisf{c}{r}})=\fra{\alpha}\Sigma_{\d}(\Lk{\fdisf{c}{1}}).\]

              We now suppose that $r=1$. Let $k'$ be a maximal (for the order given by the
              inclusion) trivially valued field
              included in $k$ (which exists by  Zorn's Lemma). As
              $k$ is algebraically closed then so is
              $k'$. The complete residual field of $x_{0,1}\in\A{k'}$ is
              $\h{x_{0,1}}=k'(S)$ endowed with the trivial valuation,
              the maximality of $k'$ implies that $\h{x_{0,1}}$ can
              not be included in $k$, therefore $k\not\in
              E(\h{x_{0,1}})$. By
              Proposition~\ref{sec:berkovich-line}, we obtain
                $\piro{k}{k'}\-1(x_{0,1})=\{x_{0,1}\}$. We set $d'=\d$ as an element of
                $\LL{k'}{\fdiscf{k'}{c}{1}}$.  We know by
                \cite[Proposition~7.1.6]{Ber} that the spectrum of
                $d'\ot 1$, as an element of
                $\LL{k'}{\fdiscf{k'}{c}{1}}\Ct{k'} k$, satisfies
                $\Sigma_{d'\ot 1}=\piro{k}{k'}^{-1}(\Sigma_{d'})$,
                by the result above we have $\piro{k}{k'}\-1(\Sigma_{d'})=\overline{\diso{0}{1}}$. From  Lemma \ref{3}  we have a
              bounded morphism $\LL{k'}{\fdiscf{k'}{c}{1}}\Ct{k'} k
              \to \Lk{\fdisf{c}{1}}$, the image of $d'\ot 1$  by this
              morphism is
                $d$.  Therefore,
                $\Sigma_d\subset\piro{k}{k'}\-1(\Sigma_{d'})=\overline{\diso{0}{1}}$. Then
                we obtain the result.

              \end{itemize}

              \end{itemize}

                      \end{proof}

              \begin{pro}\label{sec:case-zero-residual-1}
                	              Let $X=\disf{c_0}{r_0}\setminus\bigcup_{i=1}^{\mu}\diso{c_i}{r_i}$
              be a connected affinoid domain of $\Ak$ different from
              the closed disk.
 The spectrum of $\d$ as an element of
                $\Lk{\cO(X)}$
                is: \[\Sigma_{\d}(\Lk{\cO(X)})=\disf{0}{\fra{\Min_{0\leq
                        i\leq \mu}r_i}}.\]
              \end{pro}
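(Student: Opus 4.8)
The plan is to follow, almost verbatim, the positive residual characteristic case already treated in Proposition~\ref{11}; the only genuinely new ingredient is the equality $|n!|=1$, valid whenever $\crk=0$, which forces $\omega=1$ and saturates the final divergence estimate. First I would set $A=\cO(X)$ and $d=\d$. By Lemma~\ref{13} the spectral norm is $\nsp{d}=\frac{\omega}{\Min_{0\leq i\leq\mu}r_i}=\frac{1}{\Min_{0\leq i\leq\mu}r_i}$, so Theorem~\ref{sec:defin-basic-propr} immediately yields the inclusion $\Sigma_d\subset\disf{0}{\frac{1}{\Min_{0\leq i\leq\mu}r_i}}$, which disposes of the easy half.

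For the reverse inclusion I would fix a point $x\in\disf{0}{\frac{1}{\Min_i r_i}}$ and base change to $\Hx$ through the bounded morphism $\Lk{A}\ct_k\Hx\to\LL{\Hx}{\cO(X_{\Hx})}$ of Lemma~\ref{3}, under which $d\ot 1$ maps to $d_{\Hx}=\d$. Since a bounded algebra morphism sends invertible elements to invertible elements, it suffices to show that $d_{\Hx}-T(x)$ is \emph{not} invertible in $\LL{\Hx}{\cO(X_{\Hx})}$: this forces $d\ot1-1\ot T(x)$ to be non-invertible in $\Lk{A}\ct_k\Hx$, i.e. $x\in\Sigma_d$. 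Applying the Mittag-Leffler decomposition \cite[Proposition~2.2.6]{Van} to $\cO(X_{\Hx})$, each summand is stable under $d_{\Hx}$, so by Lemma~\ref{10} it is enough to contradict the surjectivity of the restriction $d_{i_0}-T(x)$ to a single pole part. I would choose the index $i_0$ with $r_{i_0}=\Min_{0\leq i\leq\mu}r_i$; since $r_i\leq r_0$ for every $i$, such an $i_0$ may be taken among the holes, so the corresponding summand $F_{i_0}=\{\sum_{n\geq1}\frac{a_n}{(S-c_{i_0})^n}:\ |a_n|r_{i_0}^{-n}\to0\}$ really occurs in the decomposition.

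The core step is then to take $g=\frac{1}{S-c_{i_0}}$ and analyse a hypothetical antecedent $f=\sum_{n\geq1}\frac{a_n}{(S-c_{i_0})^n}$ of $g$ under $d_{i_0}-T(x)$. The same recurrence as in Proposition~\ref{11} gives $a_n=\frac{(n-1)!}{(-T(x))^n}$ when $T(x)\neq0$, whence $|a_n|r_{i_0}^{-n}=|(n-1)!|\,(|T(x)|r_{i_0})^{-n}$. Because $\crk=0$ forces $|(n-1)!|=1$ while $|T(x)|r_{i_0}\leq1$, this quantity is $\geq1$ for every $n$ and cannot tend to $0$; hence $f\notin F_{i_0}$, a contradiction, and $d_{i_0}-T(x)$ is not surjective. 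The degenerate case $T(x)=0$ is immediate, since $d_{i_0}$ maps $F_{i_0}$ into the subspace of terms of pole order $\geq2$ and therefore misses $\frac{1}{S-c_{i_0}}$. This proves $\disf{0}{\frac{1}{\Min_i r_i}}\subset\Sigma_d$ and, together with the first inclusion, the claimed equality.

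I do not expect a serious obstacle: the argument is structurally identical to the positive residual characteristic computation, the single new input being that $|(n-1)!|=1$ makes the estimate saturate at $1$ instead of decaying. The point genuinely worth emphasizing is that this saturation is exactly what forces the \emph{entire} closed disk — including the type-$(1)$ points of its boundary circle — into the spectrum, in sharp contrast with the single-disk case (Proposition~\ref{sec:case-zero-residual}), where the boundary circle is excluded and one obtains only the topological closure of the open disk $\overline{\diso{0}{1/r}}$.
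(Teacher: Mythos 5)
Your proof is correct and follows essentially the same route as the paper: the paper's own proof of this proposition simply observes that $\omega=1$ and that the argument of Proposition~\ref{11} applies verbatim, which is exactly what you carried out (spectral-norm upper bound via Lemma~\ref{13}, base change to $\Hx$ via Lemma~\ref{3}, Mittag--Leffler decomposition, Lemma~\ref{10}, and non-surjectivity of $d_{i_0}-T(x)$ on the pole part $F_{i_0}$ tested against $g=\frac{1}{S-c_{i_0}}$). Your explicit treatment of the degenerate case $T(x)=0$, which the recurrence formula does not literally cover, is a small but welcome addition that the paper leaves implicit.
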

              \begin{proof}
                Here $\omega=1$ (cf. \eqref{eq:7}). The proof is
                 the same as in Proposition~\ref{11}.
              \end{proof}

              \begin{cor}
                Let $X$ be an affinoid domain of $\Ak$ which does
                not contain a closed disk as a connected component. The spectrum of $\d$ as an element of
                $\Lk{\cO(X)}$ is: \[\Sigma_{\d}(\Lk{\cO(X)})=\disf{0}{\nsp{\d}}.\]

              \end{cor}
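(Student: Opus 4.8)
The plan is to mirror the argument used for Corollary~\ref{sec:case-posit-resid} in positive residual characteristic, substituting the residual characteristic zero computation of Proposition~\ref{sec:case-zero-residual-1} for the role played there by Proposition~\ref{11}. First I would decompose $X$ into its finitely many connected components, writing $X=\bigcup_{i=1}^u X_i$ with $X_i\cap X_j=\emptyset$ for $i\neq j$. Each $X_i$ is a connected affinoid domain and, by hypothesis, none of them is a closed disk; hence every $X_i$ is of the form $\disf{c_0}{r_0}\setminus\bigcup_{j=1}^{\mu_i}\diso{c_j}{r_j}$ with $\mu_i\geq 1$. The Mittag-Leffler decomposition gives $\cO(X)=\bigoplus_{i=1}^u\cO(X_i)$ equipped with the maximum norm, and since $\d$ stabilises each summand I write $d_i$ for its restriction to $\cO(X_i)$.

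Next I would compute each $\Sigma_{d_i}$. Because $X_i$ is not a closed disk, Proposition~\ref{sec:case-zero-residual-1} applies and yields $\Sigma_{d_i}(\Lk{\cO(X_i)})=\disf{0}{\nsp{d_i}}$, a genuine closed disk centred at $0$ (here $\nsp{d_i}=1/\min_j r_j$ by Lemma~\ref{13}, since $\omega=1$ in residual characteristic zero). By Remark~\ref{sec:spectr-assoc-diff-2} the spectral norm is additive over the direct sum, so $\nsp{\d}=\max_{1\leq i\leq u}\nsp{d_i}$, and by Lemma~\ref{10} (applied inductively, the operator being block diagonal) the spectrum is the union $\Sigma_{\d}=\bigcup_{i=1}^u\Sigma_{d_i}$. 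Since a union of closed disks all centred at $0$ is just the largest one, I obtain $\Sigma_{\d}(\Lk{\cO(X)})=\bigcup_{i=1}^u\disf{0}{\nsp{d_i}}=\disf{0}{\max_i\nsp{d_i}}=\disf{0}{\nsp{\d}}$.

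The only delicate point, and the reason for the hypothesis, is that a connected component which \emph{is} a closed disk would contribute, by Proposition~\ref{sec:case-zero-residual}, the spectrum $\overline{\diso{0}{1/r}}$, the topological closure of an open disk rather than the full closed disk $\disf{0}{1/r}$; the two differ precisely along the circle $|T(\cdot)|=1/r$ away from the Gauss point. Excluding closed-disk components guarantees that every summand contributes a bona fide closed disk, so that the union collapses cleanly to the single closed disk $\disf{0}{\nsp{\d}}$. I expect no real obstacle beyond ensuring that this hypothesis is invoked at exactly the step where Proposition~\ref{sec:case-zero-residual-1} is applied to each $X_i$.
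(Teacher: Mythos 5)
Your proof is correct and takes essentially the approach the paper intends: the paper states this corollary without an explicit proof precisely because it follows the proof of Corollary~\ref{sec:case-posit-resid} verbatim, which is what you reproduce (decomposition into connected components, Proposition~\ref{sec:case-zero-residual-1} on each non-disk component, then Lemma~\ref{10} and Remark~\ref{sec:spectr-assoc-diff-2} to get $\Sigma_{\d}=\bigcup_i\disf{0}{\nsp{d_i}}=\disf{0}{\max_i\nsp{d_i}}$), and your explanation of why closed-disk components must be excluded matches the paper's subsequent remark. One cosmetic slip: the spectral norm over the direct sum is the \emph{maximum} of the $\nsp{d_i}$, not ``additive''---the formula you write is the right one, only the word is off.
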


              \begin{rem}
                Let $X$ be an affinoid domain of $\Ak$.
                Then $X=Y\cup D$ with $Y\cap D=\emptyset$, where $Y$ is an affinoid domain as
                in the corollary above and $D$ is a disjoint union of
                disks. We set $d_Y=\d_{|_{\cO(Y)}}$ and
                $d_D=\d_{|_{\cO(D)}}$. If $\nsp{d_Y}\geq\nsp{d_D}$ then
                $\Sigma_{\d}=\disf{0}{\nsp{d_Y}}$. Otherwise, $\Sigma_{\d}= \overline{\diso{0}{\nsp{d_D}}}$.
              \end{rem}
              
	\begin{pro}\label{sec:case-resid-char}
          	Let $x\in \Ak$ be a point of type (2), (3) . The spectrum of $\d$ as an element of
                $\Lk{\Hx}$
                is: \[\Sigma_{\d}(\Lk{\Hx})=\disf{0}{\frac{1}{r(x)}}.\]
                  Where $r(x)$ is the value defined in Definition~\ref{sec:berkovich-line-3}.
	\end{pro}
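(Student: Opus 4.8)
The plan is to split according to the type of $x$ and to reduce each case to the computations already carried out for disks and connected affinoid domains, observing throughout that $\crk=0$ forces $\omega=1$ and $|n|=1$ for every nonzero integer $n$. In particular, by Lemma~\ref{13} we have $\nsp{\d}=1/r(x)$, so Theorem~\ref{sec:defin-basic-propr} already gives the inclusion $\Sigma_{\d}(\Lk{\Hx})\subseteq\disf{0}{1/r(x)}$ in both cases, and it will only remain to prove the reverse inclusion. For $x$ of type (3) I would write $x=x_{c,r(x)}$ with $r(x)\notin|k^*|$, so that $\Hx=\fcouf{c}{r(x)}{r(x)}=\cO(\disf{c}{r(x)}\setminus\diso{c}{r(x)})$ is the algebra of a (degenerate) connected affinoid domain with $\min_j r_j=r(x)$; the result is then immediate from Proposition~\ref{sec:case-zero-residual-1}, exactly as the type (3) case of Proposition~\ref{sec:case-posit-char} was deduced from Proposition~\ref{11}.

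For $x$ of type (2) I would write $x=x_{c,r(x)}$ with $r(x)\in|k^*|$ and use the Mittag-Leffler decomposition of Proposition~\ref{15}, namely $\Hx=E\oplus\fdisf{c}{r(x)}$. The key step is to isolate the single summand $F_0=\{\sum_{i\ge 1}a_i(S-c)^{-i}\ :\ |a_i|r(x)^{-i}\to 0\}$ of $E$ and to write $\Hx=F_0\oplus N$ as a sum of two $d$-stable $k$-Banach subspaces (with the maximum norm). Since $d=\d$ stabilizes both summands, Lemma~\ref{10}(ii) gives $\Sigma_{d_{|F_0}}\subseteq\Sigma_{\d}$. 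To compute $\Sigma_{d_{|F_0}}$ I would reuse the recursion from the proof of Proposition~\ref{11}: for $g=(S-c)^{-1}$ and nonzero $a\in k$, a formal antecedent $f=\sum_{n\ge 1}a_n(S-c)^{-n}$ of $(d-a)f=g$ has $a_n=(-1)^n(n-1)!/a^n$, whence $|a_n|r(x)^{-n}=(|a|r(x))^{-n}$ because $|(n-1)!|=1$. For $|a|\le 1/r(x)$ this does not tend to $0$, so $f\notin F_0$ and $d-a$ is not surjective; thus $(\disf{0}{1/r(x)}\cap k)\setminus\{0\}\subseteq\Sigma_{d_{|F_0}}$, and since $k$ is algebraically closed the type (1) points are dense in $\disf{0}{1/r(x)}$, so compactness of the spectrum yields $\disf{0}{1/r(x)}\subseteq\Sigma_{d_{|F_0}}$. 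Combining everything gives the chain $\disf{0}{1/r(x)}\subseteq\Sigma_{d_{|F_0}}\subseteq\Sigma_{\d}\subseteq\disf{0}{1/r(x)}$, in which all inclusions are forced to be equalities.

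The point to watch, and the genuine difference with the positive residual characteristic case, is that here the disk summand $\fdisf{c}{r(x)}$ contributes only $\overline{\diso{0}{1/r(x)}}$ (Proposition~\ref{sec:case-zero-residual}), which is strictly smaller than the asserted full closed disk: the points of the \emph{sphere} $|T|=1/r(x)$ sitting in sibling open disks are missing. The role of the principal-part summand $F_0$ is precisely to supply these points, which is why in residual characteristic zero one cannot simply transcribe the argument of Proposition~\ref{sec:case-posit-char}. The only delicate verification is therefore that the non-surjectivity argument on $F_0$ survives at the boundary $|a|=1/r(x)$, and this is exactly where $\crk=0$ is essential: the vanishing of the $p$-adic contribution (i.e. $|(n-1)!|=1$) makes $(|a|r(x))^{-n}$ fail to tend to $0$ even when $|a|r(x)=1$, so that the entire closed disk, and not merely its open part, is captured by $\Sigma_{d_{|F_0}}$.
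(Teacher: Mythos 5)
Your type (3) case coincides with the paper's proof (both reduce immediately to Proposition~\ref{sec:case-zero-residual-1}), and your type (2) strategy is a close variant of the paper's: there, the Mittag-Leffler decomposition is regrouped as $\Hx=F\oplus\fcouf{c}{r(x)}{r(x)}$, absorbing your summand $F_0$ together with the disk algebra into the annulus algebra, and Proposition~\ref{sec:case-zero-residual-1} is applied to that summand; you instead keep $F_0$ isolated and rerun the recursion by hand. Your closing observation, that in residual characteristic zero the disk summand only contributes $\overline{\diso{0}{1/r(x)}}$ and the sphere points must come from the principal parts, is exactly the structural point that forces the paper's regrouping.

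There is, however, a genuine gap in your type (2) argument: the step ``since $k$ is algebraically closed the type (1) points are dense in $\disf{0}{1/r(x)}$'' fails when $k$ is trivially valued, and that case lies within the scope of the proposition. Indeed, a trivially valued field of characteristic $0$ satisfies $\crk=0$ (the paper explicitly treats trivially valued fields, e.g.\ in Proposition~\ref{sec:case-zero-residual}), and $\Ak$ then has exactly one point of type (2), the Gauss point $x_{0,1}$ (cf.\ the proof of Proposition~\ref{15}). In that situation every nonzero $a\in k$ has $|a|=1$, so the closure of $\disf{0}{1}\cap k$ inside $\disf{0}{1}$ is only $k\cup\{x_{0,1}\}$: no point $x_{a,\rho}$ with $0<\rho<1$ is a limit of rational points, since the annulus $\{y\ :\ \rho-\varepsilon<|T(y)-a|<\rho+\varepsilon\}$ contains no point of $k$. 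Your density-plus-compactness step therefore yields only $k\cup\{x_{0,1}\}\subset\Sigma_{\d}$, far short of the asserted closed disk. The repair is exactly what the paper's route builds in automatically: instead of testing only rational points, take an \emph{arbitrary} point $y\in\disf{0}{1/r(x)}$, push $d\ot 1-1\ot T(y)$ through the bounded morphism of Lemma~\ref{3}, and run your recursion with $a$ replaced by $T(y)$ in the principal-part summand with coefficients in $\Hy$; since $|(n-1)!|=1$ and $|T(y)|\leq 1/r(x)$, the unique formal solution again fails to decay, so $d_{\Hy}-T(y)$ is not surjective and hence $d\ot 1-1\ot T(y)$ is not invertible. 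This is precisely the argument of Proposition~\ref{11} (hence of Proposition~\ref{sec:case-zero-residual-1}), which is why grouping $F_0$ with the disk algebra and quoting that proposition, as the paper does, needs no density assumption at all.
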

	
	\begin{proof}
		We set $d=\d$. We distinguish two cases:
		\begin{itemize}
			\item\textbf{$x$ is point of type (2):} Let
                          $c\in k$ such that $x=x_{c,r(x)}$. By
                          Proposition~\ref{15} we have \[\Hx=F\oplus
                            \fcouf{c}{r(x)}{r(x)}  \]
                          where,
                          \[F:=\widehat{\bigoplus}_{\alpha\in
                    \tilde{k}\setminus\{0\}}\{\sum_{i\in\NN^*} \frac{a_{\alpha i}}{(T-c+\gamma\alpha)^i}|\; a_{\alpha i}\in
                  k,\; \lim\limits_{i\to +\infty}|a_{\alpha i}|r^{-i}= 0 \}.\] 
                          
                          We use the same arguments as in Proposition \ref{sec:case-posit-char}. 
			\item\textbf{$x$ is point of type (3):} Let
                          $c\in k$ such that $x=x_{c,r(x)}$. In this
                          case $\Hx=\fcouf{c}{r(x)}{r(x)}$, by Proposition~\ref{sec:case-zero-residual-1} we conclude.\\

		\end{itemize}
		
              \end{proof}

              \begin{pro}\label{sec:case-resid-char-1}
                Let $x\in\Ak$ be a point of type (4). The spectrum of
                $\d$ as an element of $\Lk{\Hx}$ is: \[\Sigma_{\d}(\Lk{\Hx})=\overline{\diso{0}{\frac{1}{r(x)}}}.\]Where $r(x)$ is the value defined in Definition~\ref{sec:berkovich-line-3}.              \end{pro}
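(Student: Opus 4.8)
The plan is to sandwich $\Sigma_{\d}(\Lk{\Hx})$ between $\overline{\diso{0}{\frac{1}{r(x)}}}$ and $\disf{0}{\frac{1}{r(x)}}$ and then to delete by hand the annular part that separates the two. Since $\crk=0$ we have $\omega=1$, so Lemma~\ref{13} gives $\nsp{\d}=\frac{1}{r(x)}$ and hence, by Theorem~\ref{sec:defin-basic-propr}, $\Sigma_{\d}\subset\disf{0}{\frac{1}{r(x)}}$. For the lower bound I would argue directly, as in the earlier propositions: for $a\in k$ with $|a|<\frac{1}{r(x)}$, since $r(x)=\inf_{c\in k}|S(x)-c|$ there is $c\in k$ with $|a|\,|S(x)-c|<1$, so the series $\exp(a(S-c))=\sum_{n}\frac{a^n}{n!}(S-c)^n$ converges in $\Hx$ (recall $|(S-c)^n|_x=|S(x)-c|^n$ and $|n!|=1$). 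This element lies in $\ker(\d-a)$, so $\d-a$ is not injective and $a\in\Sigma_{\d}$. Thus $\diso{0}{\frac{1}{r(x)}}\cap k\subset\Sigma_{\d}$, and since $\Sigma_{\d}$ is compact (Theorem~\ref{sec:defin-basic-propr}) and the type~$(1)$ points of the open disk are dense in its closure, we obtain $\overline{\diso{0}{\frac{1}{r(x)}}}\subset\Sigma_{\d}$.

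The real content is the upper refinement: I must show that no point of $\disf{0}{\frac{1}{r(x)}}\setminus\overline{\diso{0}{\frac{1}{r(x)}}}$ lies in $\Sigma_{\d}$. A short check shows that every such point lies in an open disk $\diso{b}{\frac{1}{r(x)}}$ with $b\in k$ and $|b|=\frac{1}{r(x)}$: for points of type $(1)$, $(2)$, $(3)$ one takes $b$ to be the center, and for a point $y$ of type $(4)$ in this set one has $r(y)<|T(y)|=\frac{1}{r(x)}$, so some $b\in k$ satisfies $|T(y)-b|<\frac{1}{r(x)}$, forcing $|b|=\frac{1}{r(x)}$. Each such disk $\diso{b}{\frac{1}{r(x)}}$ is disjoint from $\overline{\diso{0}{\frac{1}{r(x)}}}$, since every point $z$ in it has $|T(z)|=|b|=\frac{1}{r(x)}$. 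Hence it suffices to prove that for each $b\in k$ with $|b|=\frac{1}{r(x)}$ the operator $\d-b$ is invertible in $\Lk{\Hx}$ with $\nsp{(\d-b)^{-1}}\le r(x)$; Lemma~\ref{sec:defin-basic-propr-1} then gives that the largest open disk about $b$ avoiding $\Sigma_{\d}$ has radius $\nsp{(\d-b)^{-1}}^{-1}\ge\frac{1}{r(x)}$, so $\diso{b}{\frac{1}{r(x)}}\cap\Sigma_{\d}=\emptyset$, and letting $b$ range over the sphere $|b|=\frac{1}{r(x)}$ removes all the unwanted points.

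To construct the inverse I would work on the dense subalgebra $k[S]\subset\Hx$ (Lemma~\ref{sec:berkovich-line-1}). For a polynomial $g$ the finite sum $Rg:=-\sum_{j\ge 0}b^{-j-1}g^{(j)}$ is again a polynomial, and a telescoping computation gives $(\d-b)Rg=R(\d-b)g=g$. Using Lemma~\ref{13} (with $\omega=1$ and $|j!|=1$) one has $|g^{(j)}|_x=|\d^{\,j}g|_x\le \frac{1}{r(x)^{j}}|g|_x$, so each term obeys $|b^{-j-1}g^{(j)}|_x\le r(x)^{j+1}\,r(x)^{-j}|g|_x=r(x)|g|_x$, whence $|Rg|_x\le r(x)|g|_x$. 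Thus $R$ is bounded of norm $\le r(x)$ on the dense subspace $k[S]$; it extends to $R\in\Lk{\Hx}$ with $\nor{R}\le r(x)$, and by continuity (both $\d-b$ and $R$ being bounded) the identities $(\d-b)R=R(\d-b)=\mathrm{id}$ hold on all of $\Hx$. Therefore $\d-b$ is invertible and $\nsp{(\d-b)^{-1}}\le\nor{R}\le r(x)$, as needed; combining the two bounds yields $\Sigma_{\d}(\Lk{\Hx})=\overline{\diso{0}{\frac{1}{r(x)}}}$.

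I expect the delicate point to be exactly this borderline invertibility. Because $|b|=\frac{1}{r(x)}=\nsp{\d}$, the Neumann series for $(\d-b)^{-1}$ has terms of constant norm $r(x)$ and does not converge, so $\d-b$ cannot be inverted by the usual geometric-series argument, in contrast to the situation on an overconvergent disk $\fdisf{c}{\rho}$ with $\rho>r(x)$. What rescues the estimate is that the residual characteristic is $0$, so $|j!|=1$ and $\nor{\d^{\,j}}=r(x)^{-j}$ exactly, together with the fact that on polynomials the formal solution $-\sum_j b^{-j-1}g^{(j)}$ is a \emph{finite} sum; this makes the naive termwise bound summable, and density of $k[S]$ in $\Hx$ does the rest. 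This is also precisely where the type~$(4)$ computation departs from the type~$(2)$/$(3)$ case of Proposition~\ref{sec:case-resid-char}, where the inequality $|S(x)-c|>r(x)$ is attained in the limit and the full closed disk $\disf{0}{\frac{1}{r(x)}}$ is produced.
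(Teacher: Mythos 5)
Your proof is correct, and it deviates from the paper's argument at both halves, in ways worth recording. For the inclusion $\overline{\diso{0}{\fra{r(x)}}}\subset\Sigma_{\d}$ the paper does not argue directly on $\Hx$: it invokes Proposition~\ref{14} to identify $\Hx\ct_k\Hx$ with $\fdiscf{\Hx}{S(x)}{r(x)}$, applies the closed-disk computation (Proposition~\ref{sec:case-zero-residual}) over the larger base field $\Hx$, and projects back down via Lemma~\ref{sec:spectr-assoc-diff-1}; your route --- exhibiting the kernel elements $\exp(a(S-c))\in\Hx$ for $a\in k$ with $|a|<\fra{r(x)}$, which exist because the infimum defining $r(x)$ lets you choose $c$ with $|a|\,|S(x)-c|<1$, then invoking compactness of the spectrum and density of type (1) points --- is more elementary and self-contained. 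For the upper refinement the two proofs share the same skeleton: reduce to showing that $\d-b$ is invertible with $\nsp{(\d-b)^{-1}}\leq r(x)$ for every $b\in k$ with $|b|=\fra{r(x)}$, work on the dense subalgebra $k[S]$ (Lemma~\ref{sec:berkovich-line-1}), and conclude with Lemma~\ref{sec:defin-basic-propr-1}. But the key estimate is produced differently: the paper takes an embedded family of disks $\disf{c_l}{r_l}$ with $\bigcap_l\disf{c_l}{r_l}=\{x\}$, uses invertibility of $\d-b$ on each $\fdisf{c_l}{r_l}$ together with the coefficient recursion \eqref{eq:9} to get $|\phi^{-1}(g)|_{x_{c_l,r_l}}\leq r(x)\,|g|_{x_{c_l,r_l}}$, and then passes to the infimum $|\cdot|_x=\inf_l|\cdot|_{x_{c_l,r_l}}$; you instead write the inverse in closed form, $Rg=-\sum_{j}b^{-j-1}g^{(j)}$ (a finite sum on polynomials), and bound it termwise using the exact operator norm $\nor{(\d)^{j}}=|j!|/r(x)^j=r(x)^{-j}$ from Lemma~\ref{13}. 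Your version is shorter, dispenses with the auxiliary disks, and isolates precisely where the hypothesis $\crk=0$ enters (namely $|j!|=1$; in positive residual characteristic the analogous termwise bound at the boundary $|b|=\omega/r(x)$ is unbounded in $j$, consistently with the spectrum being the full closed disk there). What the paper's detour buys is reuse: both inclusions are deduced from the already-proved disk case rather than re-derived, at the cost of the scaffolding of Proposition~\ref{14} and the embedded disks. A minor point in your favor: you also spell out the verification that $\disf{0}{\fra{r(x)}}\setminus\bigcup_{b\in k,\,|b|=\fra{r(x)}}\diso{b}{\fra{r(x)}}=\overline{\diso{0}{\fra{r(x)}}}$, which the paper leaves implicit when it asserts that treating the boundary points $b$ suffices.
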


              \begin{proof}
             We set $d=\d$. By
                          Proposition \ref{14} we have $\Hx\ct_k
                          \Hx\simeq \fdiscf{\Hx}{S(x)}{r(x)}$.
                         From Lemma \ref{3} we have the bounded
                          morphism:
                          \[\Lk{\Hx}\ct_k\Hx\to
                            \LL{\Hx}{\fdiscf{\Hx}{S(x)}{r(x)}}\]
                          which
                          associates to $d\ot 1$ the derivation
                          $d_{\Hx}=\d:\fdiscf{\Hx}{S(x)}{r(x)}\to \fdiscf{\Hx}{S(x)}{r(x)}$.
                          From Proposition \ref{sec:case-zero-residual} we have
                          ${\Sigma_{d_{\Hx}}=\overline{\disco{\Hx}{0}{\frac{1}{r(x)}}}}$,
                          hence $\Pro{k}{\Hx}(\Sigma_{d_{\Hx}})=\overline{\diso{0}{\frac{1}{r(x)}}}$. By
                          Lemma \ref{sec:spectr-assoc-diff-1} we  have
                          ${\overline{\diso{0}{\frac{1}{r(x)}}}\subset
                          \Sigma_d}$. From now on we set $r=r(x)$. Since
                        $\nsp{d}=\frac{1}{r}$ (cf. Lemma
                        \ref{sec:spectr-assoc-diff-2}) and
                        $\Sigma_d\subset \disf{0}{\nsp{d}}$
                        (cf. Theorem \ref{sec:defin-basic-propr}), in
                        order to prove the statement
                        it is enough to show that for
                        all $a\in k$ such that $|a|=\frac{1}{r}$, we have
                        $\diso{a}{\frac{1}{r}}\subset\Ak\setminus\Sigma_d$. Let
                        $a\in k$ such that $|a|=\frac{1}{r}$. The
                        restriction of $d-a$ to the normed $k$-algebra $k[S]$ is a bijective
                        bounded map $d-a:k[S]\to k[S]$ with respect to
                        the restriction of 
                        $|.|_x$. We set $\phi=(d-a)_{|_{k[S]}}$. As $\Hx$ is
                        the completion of $k[S]$ with respect to
                        $|.|_x$ (cf. Lemma~\ref{sec:berkovich-line-1}), to prove that it
                        extends to an isomorphism, it is enough to show
                        that $\phi\-1 :k[S]\to k[S]$ is a  bounded
                        $k$-linear map. A family of closed
                        disks $\{\disf{c_l}{r_l}\}_{l\in I}$ is called embedded
                        if the set of index $I$ is endowed
with total order $\leq$ and for $i\leq j$ we have
$\disf{c_i}{r_i}\subset\disf{c_j}{r_j}$. Since $x$ is a point of type
(4), then there exists a family of embedded disks
$\{\disf{c_l}{r_l}\}_{l\in I}$ such that $\bigcap_{l\in I}\disf{c_l}{r_l}=\{x\}$.
If we consider $d-a$ as an element of
                      $\Lk{\fdisf{c_l}{r_l}}$, then it is invertible
                      (cf. Proposition \ref{sec:case-zero-residual})
                      and its restriction to $k[S]$ coincides with
                      $\phi$ as $k$-linear map. 
                      Let $f(S)=\sum_{i\in\NN}a_i(S-c_l)^i$ and
                      $g(S)=\sum_{i\in\NN}b_i(S-c_l)^i$ be two
                      elements of $\fdisf{c_l}{r_l}$ such that $(d-a)f=g$. Using the same
                      induction to obtain the equation \eqref{eq:8} we
                      obtain:
                      for all $n\in\NN$
                      \begin{equation}
                        \label{eq:9}
                        a_n=\frac{-a^{n-1}}{n!}\sum_{i\geq n}i!b_ia^{-i}.
                      \end{equation}
                      Hence,

                      \begin{equation}
                        \label{eq:10}
                        |a_n|=\frac{1}{r^{n-1}}|\sum_{i\geq
                          n}i!b_i
                        a^{-i}|\leq\frac{1}{r^{n-1}}\max\limits_{i\geq
                          n}|b_i|r^i\leq r\max\limits_{i\geq
                          n}|b_i|r^{i-n}\leq r\max\limits_{i\geq n}|b_i|r_l^{i-n}\leq
                        \frac{r}{r_l^{n}}\max\limits_{i\geq
                          n}|b_i|r_l^i
                      \end{equation}
%
                      Therefore,
                      \begin{equation}
                        \label{eq:11}
                        |a_n|r_l^n\leq r\max\limits_{i\geq
                          n}|b_i|r_l^i.                     
                      \end{equation}
                      Consequently,
                      \[ |f|_{x_{c_l,r_l}}\leq r|g|_{x_{c_l,r_l}}\]
                      In the special case where $f$ and $g$ are in
                      $k[S]$, then $f=\phi\-1(g)$ and we have for all
                      $l\in\NN$:
                      \[ |\phi\-1(g)|_{x_{c_l,r_l}}\leq
                        r|g|_{x_{c_l,r_l}}.\]
                      Hence,
                      \[|\phi\-1(g)|_x=\inf\limits_{l\in\NN}|\phi\-1(g)|_{x_{c_l,r_l}}\leq \inf\limits_{l\in\NN}r|g|_{x_{c_l,r_l}}=r|g|_x.
                      \]
                      This means that $\phi\-1$ is  bounded, hence
                        $d-a$ is invertible in $\Lk{\Hx}$ and
                       $\nor{(d-a)\-1}\leq r$, %
                       hence $\nsp{(d-a)\-1}\leq r$. Since $\nsp{(d-a)\-1}\-1$ is
                       the radius of the biggest disk centred in $a$
                       contained in $\Ak\setminus \Sigma_d$ (cf. Lemma
                      \ref{sec:defin-basic-propr-1}), we obtain 
                       $\diso{a}{\frac{1}{r}}\subset\Ak\setminus\Sigma_d$.

              \end{proof}

  \subsection{Spectrum of a linear differential equation with constant coefficients}\label{sec:spectr-diff-equat}

        Let $X$ be an affinoid domain of $\Ak$ and $x\in X$ a
        point of type (2), (3) or (4). We set here $A=\cO(X)$ or $\Hx$ and
        $d=\d$. Recall that a linear differential equation with constant
        coefficients is a differential module $(M,\nabla)$ over $(A,d)$ associated to a
        differential polynomial
        ${P(D)=g_0+g_1D+\dots+g_{\nu-1}D^{\nu-1}+D^\nu}$ with $g_i \in k$, or in an
        equivalent way there exists a basis for which the matrix $G$
        of the rule \eqref{eq:1} has constant coefficients (i.e $G\in
        \cM_\nu(k)$). Here we compute the spectrum of $\nabla$ as an
        element of $\Lk{M}$ (cf. Section \ref{sec:spectr-assoc-diff-4}).

        \begin{Theo}\label{sec:spectr-diff-equat-1}
          Let $X$          
          be a connected affinoid domain of $\Ak$. We set here
          $A=\cO(X)$. Let $(M,\nabla)$ be a defferential module over
          $(A,d)$ such that the matrix $G$ of the rule \eqref{eq:1}
          has constant entries (i.e. $G\in\cM_\nu(k)$), and let
          $\{a_1,\cdots, a_N\}$ be the set of eigenvalues of $G$. Then we have:
            \begin{itemize}
            \item If $X=\disf{c_0}{r_0}$, \[\Sigma_{\nabla,k}(\Lk{M})=
                \begin{cases}
                  \Bigcup_{i=1}^{N}\disf{a_i}{\frac{\omega}{r_0}}&
                  \text{If } \crk>0\\
                  &\\
                  \Bigcup_{i=1}^{N}\overline{\diso{a_i}{\frac{1}{r_0}}}&
                  \text{If } \crk=0\\
                \end{cases}.\]
            \item If
              $X=\disf{c_0}{r_0}\setminus\bigcup_{i=1}^\mu\diso{c_i}{r_i}$
              with $\mu \geq 1$,
              \[\Sigma_{\nabla,k}(\Lk{M})=\bigcup_{i=1}^{N}\disf{a_i}{\frac{\omega}{\min\limits_{0\leq
                      i\leq \mu}r_i}}.\]
          \end{itemize}
 Where $\omega$ is the positive real number introduced in \eqref{eq:7}.

        \end{Theo}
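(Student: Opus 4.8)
The plan is to reduce the computation to the already-determined spectrum of $\d$ and then translate by the eigenvalues. First I would invoke Proposition~\ref{6}: since the matrix $G\in\cM_\nu(k)$ has eigenvalue set $\{a_1,\dots,a_N\}$ and $d=\d$, we have
\[\Sigma_{\nabla,k}(\Lk{M})=\bigcup_{i=1}^{N}\bigl(a_i+\Sigma_{\d}(\Lk{A})\bigr),\]
where $A=\cO(X)$. This already performs the conceptual step; what remains is to substitute the correct value of $\Sigma_{\d}(\Lk{A})$ and to understand the geometric effect of the translation $a_i+(-)$.

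Next I would record that, for each $a_i\in k$, the set $a_i+\Sigma_{\d}(\Lk{A})$ is precisely the image of $\Sigma_{\d}(\Lk{A})$ under the polynomial map $T\mapsto T+a_i$, by Lemma~\ref{sec:berk-spectr-theory}. This map is the analytic automorphism of $\Ak$ given by translation; it sends the closed disk $\disf{0}{\rho}$ homeomorphically onto $\disf{a_i}{\rho}$ and the open disk $\diso{0}{\rho}$ onto $\diso{a_i}{\rho}$, and, being a homeomorphism, it commutes with topological closure, so it carries $\overline{\diso{0}{\rho}}$ onto $\overline{\diso{a_i}{\rho}}$. Thus translation turns each shape occurring in the computation of $\Sigma_{\d}$ into the same shape recentred at $a_i$.

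It then remains to distinguish cases according to the geometry of $X$ and the residual characteristic, plugging in the relevant proposition. If $X=\disf{c_0}{r_0}$ is a closed disk and $\crk>0$, Proposition~\ref{5} gives $\Sigma_{\d}=\disf{0}{\frac{\omega}{r_0}}$, so each summand is $\disf{a_i}{\frac{\omega}{r_0}}$; if instead $\crk=0$, Proposition~\ref{sec:case-zero-residual} gives $\Sigma_{\d}=\overline{\diso{0}{\frac{1}{r_0}}}$, yielding $\overline{\diso{a_i}{\frac{1}{r_0}}}$ (recall $\omega=1$ here). If $X=\disf{c_0}{r_0}\setminus\bigcup_{i=1}^\mu\diso{c_i}{r_i}$ with $\mu\geq 1$, then Proposition~\ref{11} (for $\crk>0$) and Proposition~\ref{sec:case-zero-residual-1} (for $\crk=0$, where $\omega=1$) both produce a genuine closed disk $\Sigma_{\d}=\disf{0}{\frac{\omega}{\min_j r_j}}$, whence each summand is $\disf{a_i}{\frac{\omega}{\min_j r_j}}$. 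Taking the union over $i$ in each case reproduces exactly the formulas of the statement.

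Since every ingredient is already established, I do not expect a genuine obstacle; the only point requiring care is the bookkeeping of which proposition applies in which case, together with the observation that the residual-characteristic-zero annulus case (Proposition~\ref{sec:case-zero-residual-1}) produces a \emph{closed} disk rather than the closure of an open one. It is precisely this feature, combined with the value $\omega=1$, that lets the single formula $\disf{a_i}{\frac{\omega}{\min_j r_j}}$ cover both characteristics uniformly when $\mu\geq 1$.
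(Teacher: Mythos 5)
Your proposal is correct and follows exactly the paper's own argument: the paper's proof consists precisely of citing Proposition~\ref{6} together with Propositions~\ref{5}, \ref{11}, \ref{sec:case-zero-residual} and \ref{sec:case-zero-residual-1}, which is the reduction-plus-substitution you carry out. Your additional remarks (translation by $a_i$ via Lemma~\ref{sec:berk-spectr-theory} is a homeomorphism of $\Ak$, hence commutes with closure and preserves the disk types) merely make explicit a step the paper leaves implicit, so there is nothing to correct.
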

        \begin{proof}
       By Propositions~\ref{5},~\ref{11},~\ref{sec:case-zero-residual},
       \ref{sec:case-zero-residual-1} and \ref{6} we obtain the result.    
        \end{proof}

        \begin{Theo}\label{sec:spectr-diff-equat-2}
          Let $x\in\Ak$          
          be a point of type (2), (3) or (4). We set here
          $A=\Hx$. Let $(M,\nabla)$ be a defferential module over
          $(A,d)$ such taht the matrix $G$ of the rule \eqref{eq:1}
          has constant entries (i.e. $G\in\cM_\nu(k)$), and let
          $\{a_1,\cdots, a_N\}$ be the set of eigenvalues of $G$. Then we have:
            \begin{itemize}
           \item If
              $x$ is a point of type (2) or (3),
              \[\Sigma_{\nabla,k}(\Lk{M})=\bigcup_{i=1}^{N}\disf{a_i}{\frac{\omega}{r(x)}}.\]
               \item If $x$ is a point of type (4), \[\Sigma_{\nabla,k}(\Lk{M})=
                \begin{cases}
                  \Bigcup_{i=1}^{N}\disf{a_i}{\frac{\omega}{r(x)}}&
                  \text{If } \crk>0\\
                  &\\
                  \Bigcup_{i=1}^{N}\overline{\diso{a_i}{\frac{1}{r(x)}}}&
                  \text{If } \crk=0\\
                \end{cases}.\]
           
          \end{itemize}
 Where $\omega$ is the positive real number introduced in \eqref{eq:7}.

\end{Theo}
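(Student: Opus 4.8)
The plan is to mirror the proof of Theorem~\ref{sec:spectr-diff-equat-1}, reducing everything to the spectrum of $\d$ on $\Hx$ computed in the previous subsection. First, since $G\in\cM_\nu(k)$ has set of eigenvalues $\{a_1,\dots,a_N\}$, Proposition~\ref{6} applies verbatim (it only requires $A$ to be one of our two allowed rings, and $A=\Hx$ qualifies) and gives directly
\[
\Sigma_{\nabla,k}(\Lk{M})=\bigcup_{i=1}^{N}\left(a_i+\Sigma_d(\Lk{\Hx})\right),
\]
where $d=\d$ and $a_i+\Sigma_d$ denotes the image of $\Sigma_d$ under the translation $T\mapsto T+a_i$. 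This translation is the analytic map attached to the polynomial $T+a_i$, so the appearance of $a_i+\Sigma_d$ is already an instance of Lemma~\ref{sec:berk-spectr-theory} built into Proposition~\ref{6}. Thus the whole theorem comes down to substituting the explicit shape of $\Sigma_d(\Lk{\Hx})$ and translating each disk by $a_i$.

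Second, I would split according to the residual characteristic and the type of $x$, exactly as the computation of $\Sigma_d$ was organized. When $\crk=p>0$, Proposition~\ref{sec:case-posit-char} gives $\Sigma_d(\Lk{\Hx})=\disf{0}{\frac{\omega}{r(x)}}$ for \emph{every} $x$ of type (2), (3) or (4); translating by $a_i$ sends this closed disk to $\disf{a_i}{\frac{\omega}{r(x)}}$, producing the positive-characteristic line of both cases of the table. When $\crk=0$, recall that $\omega=1$ by \eqref{eq:7}. For $x$ of type (2) or (3), Proposition~\ref{sec:case-resid-char} gives $\Sigma_d(\Lk{\Hx})=\disf{0}{\frac{1}{r(x)}}$, whose translate is $\disf{a_i}{\frac{1}{r(x)}}=\disf{a_i}{\frac{\omega}{r(x)}}$; since the positive-characteristic computation yielded the identical formula, the type (2)/(3) entry is uniform across residual characteristics. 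For $x$ of type (4) with $\crk=0$, Proposition~\ref{sec:case-resid-char-1} gives $\Sigma_d(\Lk{\Hx})=\overline{\diso{0}{\frac{1}{r(x)}}}$, whose translate by $a_i$ is $\overline{\diso{a_i}{\frac{1}{r(x)}}}$.

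The only steps needing a word of justification are essentially bookkeeping: that translation by a constant $a_i\in k$ carries the closed disk centred at $0$ to the closed disk of the same radius centred at $a_i$, and the topological closure of an open disk to the closure of the translated open disk (both clear since $T\mapsto T+a_i$ is an isometric automorphism of $\Ak$), and that the convention $\omega=1$ in residual characteristic zero is precisely what merges the two type (2)/(3) sub-cases with the positive-characteristic formula. I expect no genuine obstacle at this stage: the substantive analysis was carried out in Propositions~\ref{sec:case-posit-char},~\ref{sec:case-resid-char} and~\ref{sec:case-resid-char-1}, and the present statement is their assembly through Proposition~\ref{6}.
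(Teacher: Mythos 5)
Your proposal is correct and follows exactly the paper's own route: the paper's proof is precisely the assembly of Proposition~\ref{6} with Propositions~\ref{sec:case-posit-char}, \ref{sec:case-resid-char} and~\ref{sec:case-resid-char-1}, split by residual characteristic and the type of $x$, just as you describe. Your additional remarks on the translation $T\mapsto T+a_i$ being an isometric automorphism and on the convention $\omega=1$ when $\mathrm{char}(\tilde{k})=0$ are harmless elaborations of bookkeeping the paper leaves implicit.
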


\begin{proof}
 By
 Propositions~\ref{sec:case-posit-char},~\ref{sec:case-resid-char},~\ref{sec:case-resid-char-1}
 and \ref{6} we obtain the result. 
\end{proof}
        
\begin{rem}\label{sec:spectr-line-diff-1}
  Notice that since the spectrum of $\nabla$ is independant of the
  choice of the basis, if $G'$ is another associated matrix to the differential
  module $(M,\nabla)$ with constant entries. Then  the set of 
  eigenvalues  $\{a'_1,\cdots, a'_{N'}\}$ of $G'$ can not be arbitrary, namely it must satisfy: for each
  $a'_i$ there exists $a_j$ such that $a'_i$  belongs to the connected
  componente of the spectrum containing $a_j$. 
\end{rem}

        \begin{rem}\label{sec:spectr-line-diff}
As mentioned in the introduction, if we consider the differential polynomial $P(d)$ as an
          element of $\Lk{A}$, then its spectrum
          $\Sigma_{P(d)}=P(\Sigma_{d})$
          (cf. Lemma~\ref{sec:berk-spectr-theory}) which is in general diffrent
          from the spectrum of the associated connexion.
        \end{rem}

        \section{Variation of the spectrum}   \label{sec:spectrum-variation}

In this section, we will discuss about the behaviour of the
        spectrum of $(M,\nabla)$ over $(\Hx,d)$, when we vary $x$ over
        $[x_1,x_2]\subset \Ak$, where $x_1$ and $x_2$ are points of
        type (2), (3) or (4). For that we need to define a topology over $K(\Ak)$
        the set of nonempty compact subsets of $\Ak$. Note that, in
        the case $\Ak$ is metrisable, we can endow $K(\Ak)$ with a
        metric called Hausdorff metric. However, in general $\Ak$ is
        not metrisable. Indeed, it is metrizable
        if and only if $\tilde{k}$ is countable. In the first
        part of the section, we will introduce  the topology on
        $K(\cT)$ (set of nonempty compact subset of a Hausdorff
        topological space $\cT$), that coincides with the topology
        induced by the Hausdorff metric in the metrizable case. In the second, we will prove that the
        variation of the spectrum of a differential equation with
        constant coefficients is left continuous.


        \subsection{The topology on $K(\cT)$}
        Let $\cT$ be a Hausdorff topological space, we will denote by $K(\cT)$
        the set of nonempty compact subset of $\cT$. Recall that, in
        the case where $\cT$ is metrizable. For an associated metric $\mathscr{M}$, the
        respective Hausdorff metric $\MM_H$ definedon
        $K(\cT)$ is given as
        follows. Let $\Sigma$, $\Sigma'\in K(\cT)$
        \begin{equation}
          \label{eq:13}
          \MM_H(\Sigma,\Sigma')=\max\{ \sup_{\beta\in
            \Sigma'}\inf_{\alpha\in
            \Sigma}\MM(\alpha,\beta),\sup_{\alpha\in
            \Sigma}\inf_{\beta\in \Sigma'}\MM(\alpha,\beta)\}. 
        \end{equation}
        We introduce below a topology on $K(\cT)$ for an arbitrary
        Hausdorff topological space $\cT$, that coincides with the topology
        induced by the Hausdorff metric in the metrizable case.
        
        \paragraph{The topology on $K(\cT)$:} Let $U$ be an open of
        $\cT$ and $\{U_i\}_{i\in I}$ be a finite open cover of $U$. We
        set:
        \begin{equation}
          \label{eq:12}
          (U,\{U_i\}_{i\in I})=\{ \Sigma\in K(\cT)|\;  \Sigma\subset U,\; \Sigma\cap
          U_i\not= \emptyset\; \forall i\}.
        \end{equation}
        The family of sets of this form is stable under finite
        intersection. Indeed, we have:
        \[(U,\{U_i\}_{i\in I})\cap (V,\{V_j\}_{j\in J})=(U\cap
          V,\{U_i\cap V\}_{i\in I}\cup \{V_j\cap U\}_{j\in J}).\]
        We endow $K(\cT)$ with the topology
        generated by this family of  sets.

        \begin{lem}
          The topological space $K(\cT)$ is Hausdorff.
        \end{lem}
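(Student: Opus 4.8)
The plan is to run the standard argument showing that the Vietoris-type topology defined by the sets $(U,\{U_i\}_{i\in I})$ inherits the Hausdorff property from $\cT$. Let $\Sigma,\Sigma'\in K(\cT)$ with $\Sigma\neq\Sigma'$. Since they are distinct as subsets of $\cT$, their symmetric difference is nonempty, so after possibly exchanging their roles I may assume there exists a point $a\in\Sigma\setminus\Sigma'$.

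First I would separate the point $a$ from the compact set $\Sigma'$. Since $\cT$ is Hausdorff and $a\notin\Sigma'$, for every $b\in\Sigma'$ I choose disjoint open sets $V_b\ni a$ and $W_b\ni b$. The family $\{W_b\}_{b\in\Sigma'}$ is an open cover of the compact set $\Sigma'$, hence finitely many $W_{b_1},\dots,W_{b_n}$ already cover it. Setting $W=\bigcup_{j=1}^{n}W_{b_j}$ and $V=\bigcap_{j=1}^{n}V_{b_j}$, I obtain open sets with $a\in V$, $\Sigma'\subseteq W$ and $V\cap W=\emptyset$. This is the only step with genuine content, and it is just the classical fact that a point and a disjoint compact set can be separated by open sets in a Hausdorff space; I do not expect any real obstacle.

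Next I would exhibit the two separating basic open sets. Consider $G=(\cT,\{V,\cT\})$ and $H=(W,\{W\})$. Both are admissible: $\{V,\cT\}$ is a finite open cover of $\cT$, and $\{W\}$ is a finite open cover of $W$. Unwinding the definition in \eqref{eq:12}, one has $G=\{\Sigma''\in K(\cT)\mid \Sigma''\cap V\neq\emptyset\}$ (the condition $\Sigma''\subset\cT$ and $\Sigma''\cap\cT\neq\emptyset$ being automatic for nonempty compacts) and $H=\{\Sigma''\in K(\cT)\mid \Sigma''\subseteq W\}$. Then $\Sigma\in G$ because $a\in\Sigma\cap V$, and $\Sigma'\in H$ because $\Sigma'\subseteq W$.

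Finally I would check disjointness. If some $\Sigma''$ belonged to $G\cap H$, then $\Sigma''\cap V\neq\emptyset$ while $\Sigma''\subseteq W$, which would force $V\cap W\neq\emptyset$, contradicting the construction. Hence $G\cap H=\emptyset$, so $G$ and $H$ are disjoint neighbourhoods separating $\Sigma$ and $\Sigma'$, which proves that $K(\cT)$ is Hausdorff. Everything beyond the separation step is routine bookkeeping with the definition of the basic opens.
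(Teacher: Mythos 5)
Your proof is correct and follows essentially the same route as the paper: pick a point $a\in\Sigma\setminus\Sigma'$, separate it from the compact set $\Sigma'$ by disjoint open sets, and then separate $\Sigma$ from $\Sigma'$ in $K(\cT)$ by basic opens of the form ``must meet $V$'' and ``must lie in $W$'', exactly as the paper does with $(U,\{U,U_x\})$ and $(U',\{U'\})$. The only differences are cosmetic: you spell out the compactness argument behind the point--compact separation (which the paper merely asserts) and take $U=\cT$ where the paper takes an arbitrary open $U$ containing $\Sigma$ and $U_x$.
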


        \begin{proof}
          Let $\Sigma$ and $\Sigma'$ be two compact subsets of $\cT$
          such that $\Sigma\ne\Sigma'$. We may assume that $\Sigma'\not\subset\Sigma$. Let
          $x\in\Sigma\setminus \Sigma'$. Since $\cT$ is a
          Hausdorff space, there exists an open
          neighbourhood of $U_x$ of $x$ and an open neighbourhood $U'$
          of $\Sigma'$, such that $U_x\cap
          U'=\emptyset$. Let $U$ be an open neighbourhood of $\Sigma$
          such that $U_x\subset U$. Then the open set $(U,\{U,U_x\})$
          (resp. $(U',\{U'\})$) is a
          neighbourhood of $\Sigma$ (resp. $\Sigma'$) in $K(\cT)$
          such that $(U,\{U,U_x\})\cap (U',\{U'\})=\emptyset$. 

        \end{proof}

        \begin{lem}
          Assume that $\cT$ is metrizable. The topology on
          $K(\cT)$ coincides with the topology induced by the
          Hausdorff metric.
        \end{lem}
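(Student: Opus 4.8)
The plan is to compare the two generating families directly and show that each basic open set of one topology is open in the other. Write $\tau$ for the topology generated by the sets $(U,\{U_i\}_{i\in I})$ of \eqref{eq:12} and $\tau_H$ for the topology induced by $\MM_H$ of \eqref{eq:13}; for $y\in\cT$ and $\Sigma\in K(\cT)$ set $\MM(y,\Sigma)=\inf_{\alpha\in\Sigma}\MM(\alpha,y)$, and for $\varepsilon>0$ let $B_H(\Sigma,\varepsilon)=\{\Sigma'\in K(\cT)\mid\MM_H(\Sigma,\Sigma')<\varepsilon\}$. Since the sets $(U,\{U_i\}_{i\in I})$ form a basis of $\tau$ and the balls $B_H(\Sigma,\varepsilon)$ form a basis of $\tau_H$, it is enough to prove: (a) every $(U,\{U_i\}_{i\in I})$ is $\tau_H$-open, and (b) every $B_H(\Sigma,\varepsilon)$ is $\tau$-open. (The sets \eqref{eq:12} are essentially the basic open sets of the Vietoris topology on the hyperspace of compact subsets, so this is the classical coincidence of the Vietoris and Hausdorff-metric topologies, rewritten in the present notation.)

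For (a), fix $\Sigma\in(U,\{U_i\}_{i\in I})$. Since $\Sigma$ is compact and contained in the open set $U$, the distance $\delta_0:=\inf\{\MM(\alpha,z)\mid\alpha\in\Sigma,\ z\notin U\}$ is strictly positive (set $\delta_0=+\infty$ if $U=\cT$). For each $i\in I$ choose $x_i\in\Sigma\cap U_i$ and $\varepsilon_i>0$ with $B(x_i,\varepsilon_i)\subset U_i$, where $B(x_i,\varepsilon_i)$ denotes the metric ball in $\cT$. As $I$ is finite, put $\varepsilon=\min(\delta_0,\min_{i\in I}\varepsilon_i)>0$. If $\MM_H(\Sigma,\Sigma')<\varepsilon$, then the first term of \eqref{eq:13} gives $\MM(\beta,\Sigma)<\delta_0$ for every $\beta\in\Sigma'$, whence $\beta\in U$ and $\Sigma'\subset U$; and the second term gives $\MM(x_i,\Sigma')<\varepsilon_i$, so some point of $\Sigma'$ lies in $B(x_i,\varepsilon_i)\subset U_i$, i.e.\ $\Sigma'\cap U_i\neq\emptyset$. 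Hence $B_H(\Sigma,\varepsilon)\subset(U,\{U_i\}_{i\in I})$.

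For (b), fix $\Sigma$ and $\varepsilon>0$. Using compactness I would extract a finite $\tfrac{\varepsilon}{3}$-net $x_1,\dots,x_n\in\Sigma$ with $\Sigma\subset\bigcup_{j}B(x_j,\tfrac{\varepsilon}{3})$, and set $V_j=B(x_j,\tfrac{\varepsilon}{3})$, $V=\bigcup_j V_j$ and $W=(V,\{V_j\}_{1\le j\le n})$. Then $\Sigma\in W$, since $\Sigma\subset V$ and $x_j\in\Sigma\cap V_j$. To check $W\subset B_H(\Sigma,\varepsilon)$, take $\Sigma'\in W$ and bound the two halves of \eqref{eq:13}: every $\beta\in\Sigma'$ lies in some $V_j$, so $\MM(\beta,\Sigma)\le\MM(\beta,x_j)<\tfrac{\varepsilon}{3}$; and every $\alpha\in\Sigma$ lies in some $B(x_j,\tfrac{\varepsilon}{3})$ while $\Sigma'\cap V_j$ contains a point $\beta_j$, so $\MM(\alpha,\Sigma')\le\MM(\alpha,x_j)+\MM(x_j,\beta_j)<\tfrac{2\varepsilon}{3}$ by the triangle inequality. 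Thus $\MM_H(\Sigma,\Sigma')\le\tfrac{2\varepsilon}{3}<\varepsilon$, so $W$ is a $\tau$-neighbourhood of $\Sigma$ inside $B_H(\Sigma,\varepsilon)$. Together, (a) and (b) give $\tau=\tau_H$.

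I expect no essential difficulty; the only points needing care are the asymmetry of $\MM_H$ in \eqref{eq:13}, which forces the conditions $\Sigma'\subset U$ and $\Sigma'\cap U_i\neq\emptyset$ to be controlled by two separate estimates (the ``upper'' and ``lower'' halves), and the two uses of compactness---first to guarantee $\delta_0>0$ in (a), then to produce the finite net in (b). The finiteness of $I$ in \eqref{eq:12} is exactly what makes $\varepsilon$ positive in (a), which is why the definition only allows finite covers.
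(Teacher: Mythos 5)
Your proof is correct and follows essentially the same route as the paper's: one direction uses the positivity of the distance from the compact set $\Sigma$ to the closed complement of $U$ together with small balls inside each $U_i$, and the other uses a finite $\tfrac{\varepsilon}{3}$-net of $\Sigma$ and the triangle inequality to bound both halves of $\MM_H$ by $\tfrac{2\varepsilon}{3}$. The only cosmetic difference is that you phrase the comparison as mutual openness of basic sets, while the paper compares neighbourhood filters at each fixed $\Sigma$; these are equivalent formulations of the same argument.
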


        \begin{proof}
          Let $\MM$ be a metric associated to $\cT$. For $x\in \cT$
          and $r\in \R+^*$ we
          set \[ B_{\MM}(x,r):=\{y\in \cT|\; \MM(x,y)<r\}.\]
          For $\Sigma\in
          K(\cT)$ and $r\in\R+^*$, we set
          $B_{\MM_H}(\Sigma,r):=\{\Sigma'\in K(\cT)|\; \MM_H(\Sigma,\Sigma')<r\}$.
          Let $\Sigma\in K(\cT)$. To prove the statement, we need to show that for all $r\in \R+^*$ there
          exists an open neighbourhood $(U,\{U_i\}_{i\in I})$ of $\Sigma$ such that
          $(U,\{U_i\}_{i\in I})\subset B_{\MM_H}(\Sigma,r)$, and for
          each open neighbourhood $(U,U_i)_{i\in I}$ of $\Sigma$,
          there exists $r\in \R+^*$ such that
          $B_{\MM_H}(\Sigma,r)\subset(U,\{U_i\}_{i\in I})$.

          Let $r\in \R+^*$. Let $\{c_1,\cdots, c_m\}\subset
          \Sigma$ such that $\Sigma\subset
          \bigcup_{i=1}^mB_\MM(c_i,\frac{r}{3})$, we set
          $U=\bigcup_{i=1}^mB_\MM(c_i,\frac{r}{3})$. We claim that
          $(U,\{B_\MM(c_i,\frac{r}{3})\}_{i=1}^m)\subset
          B_{\MM_H}(\Sigma,r)$. Indeed, let $\Sigma'\in
          (U,\{B_\MM(c_i,\frac{r}{3})\}_{i=1}^m)$. As $\Sigma'\in U$, for
          all $y\in \Sigma'$ we have 
          $\min_{1\leq i\leq m}(\MM(y,c_i))<\frac{r}{3}$.
          Therefore,
          \[\sup_{y\in\Sigma'}\inf_{x\in \Sigma}\MM(y,x)\leq
            \frac{r}{3}<r.\]
          Since for each $c_i$ there exists $y\in \Sigma'$ such that
          $\MM(c_i,y)<\frac{r}{3}$, for each $x\in \Sigma$ there
          exists $y\in \Sigma'$ such that
          $\MM(x,y)<\frac{2r}{3}$. Indeed, there exists $c_i$ such that
          $x\in B(c_i, \frac{r}{3})$, therefore we have
          \[\MM(x,y)\leq \MM(c_i,y)+\MM(c_i,x)< \frac{2r}{3}.\]
          This implies
          \[\sup_{x\in \Sigma}\inf_{y\in \Sigma'}\MM(x,y)\leq\frac{2r}{3}<r.\]
          Consequently, $\MM_H(\Sigma,\Sigma')<r$.
          
          Now let $(U,\{U_i\}_{i\in I})$ be an open neighbourhood of
          $\Sigma$. Let $\alpha=\inf_{y\in \cT\setminus U}\inf_{x\in \Sigma}\MM(x,y)$,
          since $\Sigma\subsetneq U$ we have $\alpha\ne 0$. For each
          $1\leq i\leq m$, let $c_i\in \Sigma\cap U_i$. There exists
          $0<\beta<\alpha$ such that for all $0<r<\beta$ we have $B_{\MM}(c_i,r)\subset U_i$ for each $1\leq
          i\leq m$.
          
We claim that $B_{\MM_H}(\Sigma,r)\subset
          (U,\{U_i\}_{i=1}^{m})$. Indeed, let $\Sigma'\in
          B_{\MM_H}(\Sigma,r)$ this means that:
          \[\sup_{y\in \Sigma'}\inf_{x\in
              \Sigma}\MM(x,y)<r;\qqq \sup_{x\in
              \Sigma}\inf_{y\in \Sigma'}\MM(x,y)<r.\]
          The first inequality implies $\Sigma'\subset U$. The second
          implies that for each $c_i$ there exists $y\in \Sigma'$ such
          that ${\MM(c_i,y)<r}$. Hence, $\Sigma'\cap U_i=\emptyset$ for
          each $i$.
        \end{proof}

        \subsection{Variation of the spectrum}
        
        Let $X$ be an affinoid domain of $\Ak$. Let $(M,\nabla)$ be a differential module over
        $(\cO(X),\d)$ such that there exists a basis for which
        the associated matrix $G$ has constant entries. For a point $x\in X$
        not of type (1), the differential module $(M,\nabla)$ extends
        to a differential module $(M_x,\nabla_x)$ over $(\Hx,\d)$. In
         the corresponding basis of $(M_x,\nabla_x)$ the associated matrix is $G$.

        \begin{Theo}\label{sec:topology-kak}
        Suppose that $k$ is algebraically closed. Let
        $X=\disf{c_0}{r_0}\setminus\bigcup_{i=1}^\mu\diso{c_i}{r_i}$
        be a connected affinoid domain and $x\in X$ be a point of type
        (2), (3) or (4). Let
          $(M,\nabla)$ be a differential module over
          $(\cO(X),\d)$ such that there exists a basis for which
          the corresponding matrix $G$ has constant entries. We set:
          \[\Fonction{\Psi:
              [x,x_{c_0,r_0}]}{K(\Ak)}{y}{\Sigma_{\nabla_y}(\Lk{M_y})}.\]
          Then we have:
          \begin{itemize}
          \item for each $y\in [x,x_{c_0,r_0}]$, the restriction of
            $\Psi$ to $[x,y]$ is continuous at $y$.
          \item If $y\in [x,x_{c_0,r_0}]$ is a point of type (3),
            then $\Psi$ is continuous at $y$.
          \item If $\crk=0$ and $y\in [x,x_{c_0,r_0}]$ is a point of type
            (4), then $\Psi$ is continuous at $y$.   
          \end{itemize}
        \end{Theo}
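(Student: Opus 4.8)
The plan is to reduce everything to the behaviour of a union of disks centred at the fixed points $a_1,\dots,a_N$ whose common radius is governed by the single real parameter $r(y)$. First I would record the shape of the spectrum: for every $y\in[x,x_{c_0,r_0}]$ the module $(M_y,\nabla_y)$ over $(\Hy,\d)$ carries the same constant matrix $G$, so Theorem~\ref{sec:spectr-diff-equat-2} gives $\Psi(y)=\bigcup_{i=1}^N\disf{a_i}{\omega/r(y)}$ when $y$ is of type (2) or (3), while for $y$ of type (4) it is $\bigcup_{i=1}^N\disf{a_i}{\omega/r(y)}$ if $\crk>0$ and $\bigcup_{i=1}^N\overline{\diso{a_i}{1/r(y)}}$ if $\crk=0$. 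Next I would use the structure of the segment: $y\mapsto r(y)$ is a strictly increasing homeomorphism of $[x,x_{c_0,r_0}]$ onto the interval $[r(x),r_0]$, its interior points are of type (2) or (3), and a point of type (4), being an end of the tree, can occur only at the leaf $x$ (the point $x_{c_0,r_0}$ is of type (2) or (3)). Thus, as a function of $t=r(y)$, $\Psi$ is the assignment $t\mapsto\bigcup_i\disf{a_i}{\omega/t}$ at every interior and top point, and the formula can switch to closures of open disks only at the leaf $x$ when $\crk=0$.

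The theorem then reduces to two convergence statements in $K(\Ak)$ for $\Sigma_n:=\bigcup_i\disf{a_i}{\rho_n}$ with fixed centres $a_i$. Statement (a): if $\rho_n\searrow\rho$ then $\Sigma_n\to\bigcup_i\disf{a_i}{\rho}$; here the $\Sigma_n$ form a nested decreasing family of compacts with intersection $\bigcup_i\disf{a_i}{\rho}$ (the union being finite), so for a basic neighbourhood $(U,\{U_i\})$ of the limit, $\Sigma_n\subset U$ for large $n$ follows from compactness, and $\Sigma_n\cap U_i\ne\emptyset$ is automatic since the limit is contained in every $\Sigma_n$. Statement (b): if $\rho_n\nearrow\rho$ then $\Sigma_n\to\bigcup_i\overline{\diso{a_i}{\rho}}$; here $\Sigma_n\subset\bigcup_i\diso{a_i}{\rho}$ lies inside the limit so the inclusion into $U$ is trivial, and the content is that each $U_i$ is eventually met. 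A point of the limit not already in some $\diso{a_j}{\rho}$ is one of the finitely many Shilov points $x_{a_j,\rho}$; the key device is that $t\mapsto x_{a_j,t}$ is a continuous path ending at $x_{a_j,\rho}$, so any neighbourhood of $x_{a_j,\rho}$ contains $x_{a_j,\rho'}$ for $\rho'$ slightly below $\rho$, and such a point lies in $\disf{a_j}{\rho_n}\subset\Sigma_n$ as soon as $\rho_n\ge\rho'$.

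Finally I would match the three bullets. Approaching $y$ inside $[x,y]$ means $t\nearrow r(y)$, i.e. $\rho=\omega/t\searrow\omega/r(y)$, which is case (a) with limit $\bigcup_i\disf{a_i}{\omega/r(y)}=\Psi(y)$ for $y$ of type (2) or (3) (the case $y=x$ being trivial): this is the first bullet. Full continuity additionally requires the approach $t\searrow r(y)$, i.e. $\rho\nearrow\omega/r(y)$, which is case (b) with limit $\bigcup_i\overline{\diso{a_i}{\omega/r(y)}}$. If $y$ is of type (3) then $r(y)\notin|k^*|$, hence (as $k$ is algebraically closed, $\omega\in|k^*|$) also $\omega/r(y)\notin|k^*|$, so $\disf{a_i}{\omega/r(y)}=\overline{\diso{a_i}{\omega/r(y)}}$ and the limit of (b) equals $\Psi(y)$: this is the second bullet. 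If $y=x$ is of type (4) with $\crk=0$ then only the approach $t\searrow r(x)$ occurs (since $x$ is the leaf), and the limit of (b), $\bigcup_i\overline{\diso{a_i}{1/r(x)}}$, is by definition $\Psi(x)$: this is the third bullet.

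I expect the main obstacle to be case (b), and precisely the identification of its limit as the union of the \emph{closures of the open disks} rather than the full closed disks. This is the phenomenon that makes the theorem sharp: at a point of type (2), or a type (4) point with $\crk>0$, the prescribed $\Psi(y)$ is a full closed disk whose bounding sphere contains rational points isolated from every approximating $\disf{a_i}{\rho_n}$ with $\rho_n<\omega/r(y)$, so $\Psi$ is discontinuous from that side; the path argument recovers only the single Shilov point $x_{a_j,\rho}$, which suffices exactly when $\omega/r(y)\notin|k^*|$ (type (3)) or when the spectrum is already a closure of an open disk (type (4), $\crk=0$).
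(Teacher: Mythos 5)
Your proof is correct, and its overall route is the paper's: identify the segment with the interval of radii via $y\mapsto r(y)$, note that every point of $(x,x_{c_0,r_0}]$ is of type (2) or (3) so that Theorem~\ref{sec:spectr-diff-equat-2} gives $\Psi(y')=\bigcup_{i}\disf{a_i}{\omega/r(y')}$ there, and reduce the three bullets to two one-sided convergence statements, with the whole theorem resting on the dichotomy between the outer limit $\bigcup_i\disf{a_i}{\rho}$ and the inner limit $\bigcup_i\overline{\diso{a_i}{\rho}}$, which agree precisely when $\rho\notin|k^*|$ (type (3)) or when the spectrum is itself a union of closures of open disks (type (4), $\crk=0$). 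Where you genuinely differ from the paper is in the execution of the two convergence lemmas. For the outer approach the paper shrinks $U$ to a finite union of open disks and produces an explicit threshold point $x_R$ with $r(x_R)=\max(r(x),\omega/R)$; you instead apply the finite-intersection property to the nested compacts $\Sigma_{y'}\setminus U$ — shorter, at the price of being less explicit. For the inner approach the paper picks witnesses $\alpha_i\in\Sigma_y\cap U_i$, reduces them to type (1) or (3), and constructs by hand disks or closed annuli inside $U_i$ meeting every nearby $\Sigma_{y'}$; you instead observe that a witness lying in some $\diso{a_j}{\rho}$ already belongs to $\disf{a_j}{\rho_n}\subset\Sigma_n$ once $\rho_n$ exceeds $|T(\alpha_i)-a_j|$, and that a witness equal to the Shilov point $x_{a_j,\rho}$ can be traded for a nearby path point $x_{a_j,\rho'}\in U_i$ with $\rho'<\rho$. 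This eliminates the paper's case analysis on the type of the witnesses, which is the main simplification your write-up buys; the cost is that you invoke two standard facts as black boxes — $\overline{\diso{a}{\rho}}=\diso{a}{\rho}\cup\{x_{a,\rho}\}$, and the continuity of $t\mapsto x_{a_j,t}$ (immediate, since $t\mapsto|P(x_{a_j,t})|=\max_m|c_m|t^m$ is continuous for every $P=\sum_m c_m(T-a_j)^m$) — whereas the paper in effect re-derives this information through its neighbourhood bases of open disks and open annuli. Both sets of arguments are sound, and your matching of the bullets (left approach always converges to the closed disks; right approach exists only at points of type (2), (3), or at $x_{c_0,r_0}$, and converges to the closures of the open disks) is exactly the paper's.
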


        \begin{proof}
          Let $\{a_1,\cdots, a_N\}\subset k$ be the set of eigenvalues
          of $G$. We identifie $[x,x_{c_0,r_0}]$ with the interval
          $[r(x),r_0]$ by the map $y\mapsto r(y)$ (cf. Definition
          \ref{sec:berkovich-line-3}). Let $y\in [x,x_{c_0,r_0}]$. We set
              $\Sigma_y=\Sigma_{\nabla_y}(\Lk{M_y})$.  By Theorem
           \ref{sec:spectr-diff-equat-2},  for all $y'\in
              (x,x_{c_0,r_0}]$ ($y'$ can not be a point of type (4)) we have
            $\Sigma_{y'}=\bigcup_{i=1}^N\disf{a_i}{\frac{\omega}{r(y')}}$. Let
              $(U,\{U_i\}_{i\in I})$ be an open neighbourhood of
              $\Sigma_y$. Since $\Sigma_y$ is a finite union of closed
              disks, we may assume that $U$ is a finite union of  open
              disks. Let $R$ be the smallest radius of those
              disks.
          \begin{itemize}
          \item If $y=x$ then it is
            obvious that the restriction of $\Psi$ to $[x,y]=\{y\}$ is
            continuous at $y$. Now, we assume that $y\ne x$. Let
            $x_R\in (x,y)$ be the point with radius $r(x_R)=\max(r(x),\frac{\omega}{R})$.
           Then  for each $y'\in (x_R,y]$ we have
           $\frac{\omega}{r(y')}<R$. Hence, $\Sigma_{y'}\subset U$ for each $y'\in (x_R,y]$. 
              Since $\frac{\omega}{r(y)}\leq \frac{\omega}{r(y')}$ for
              each $y'\in (x_R,y]$, we have
              $\Sigma_y\subset\Sigma_{y'}$. Therefore 
              $\Sigma_{y'}\cap U_i\ne
              \emptyset$ for each $y'\in (x_R,y]$. Then we obtain
              $(x_R,y]\subset\Psi\-1((U,\{U_i\}_{i\in I}))$.
            \item Let $y\in [x,x_{c_0,r_0}]$ be point of type
              (3). Since $k$ is algebraically closed, $\omega\in |k|$.
              Therefore, $\frac{\omega}{r(y)}\not\in |k|$ and we have
              $\Sigma_y=\bigcup_{i=1}^N\disf{a_i}{\frac{\omega}{r(y)}}=\bigcup_{i=1}^N(\diso{a_i}{\frac{\omega}{r(y)}}\cup\{x_{a_i,\frac{\omega}{r(y)}}\})$.
              In order to
              prove the continuity at $y$ it is enough to prove that
              the restriction $\Psi$ to $[y,x_{c_0,r_0}]$ is
              continuous at $y$. Let $y'\in [y,x_{c_0,r_0}]$. Since
              $\frac{\omega}{r(y')}\leq \frac{\omega}{r(y)}$, we
              have $\Sigma_{y'}\subset\Sigma_y\subset U$. Now we
              need to show that there exists $x'\in (y,x_{c_0,r_0}]$ such that for all
              $y'\in [y,x')$ we have $\Sigma_{y'}\cap U_i\ne
              \emptyset$. Let $\alpha_1,\cdots,\alpha_m\in\Sigma_y$ such that $\alpha_i\in
              \Sigma_y\cap U_i$ for each $i$. Since $\Sigma_y$ is an
              affinoid domain we can assume that the $\alpha_i$ are
              either of type (1) or (3)\footnote{Note that, in the
                case where $k$ is not trivially valued, we may assume
                that the $\alpha_i$ are of type (1).}. For each
              $\alpha_i$ there exists $a_{j_i}$ such that $\alpha_i\in
              \diso{a_{j_i}}{\frac{\omega}{r(y)}}\cup\{ x_{a_{j_i},\frac{\omega}{r(y)}}\}\subset \Sigma_y$.

              If $\alpha_i$ is point of
              type (1), then $\alpha_i\in
              \diso{a_{j_i}}{\frac{\omega}{r(y)}}$. Since the open
              disks form a basis of neighbourhoods for the points of
              type (1), there exists
              $\diso{\alpha_i}{L_i}\subset\diso{a_{j_i}}{\frac{\omega}{r(y)}}\cap U_i$. We set $R_i=\max(|b_i-a_{j_i}|,L_i)$. Let  $x_i\in [y,x_{c_0,r_0}]$ with
              $r(x_i)=\min(r(x_{c_0,r_0}),\frac{\omega}{R_i})$.  Then for all $y'\in
              (y,x_i)$ we have
              $R_i<\frac{\omega}{r(y')}<\frac{\omega}{r(y)}$. This implies 
              $\diso{\alpha_i}{R_i}\subset\diso{a_{j_i}}{\frac{\omega}{r(y')}}\subset\Sigma_{y'}$ for each $y'\in
              [y,x_i)$. 

              Suppose now that $\alpha_i=x_{b_i,L_i}$ is
             point of type (3). If $\alpha_i\ne x_{a_{j_i},\frac{\omega}{r(y)}}$, we set
              $R_i=\max(|a_{j_i}-b_i|,L_i)$. Let $x_i\in
              (y,x_{c_0,r_0}]$ with $r(x_i)=\min(r(x_{c_0,r_0}),\frac{\omega}{R_i})$. Since $\frac{\omega}{r(y')}>R_i$ for each $y'\in
              [y,x_i)$, we have $\alpha_i\in \diso{a_{j_i}}{\frac{\omega}{r(y')}}\subset\Sigma_{y'}$. 
              If $\alpha_i=x_{a_{j_i},\frac{\omega}{r(y)}}$, since it
              is a point of type (3), the open annulus 
              $\couo{a_{j_i}}{L^1_i}{L^2_i}$ form a basis of
              neighbourhoods of $\alpha_i$. Therefore, there exists
              $\couo{a_{j_i}}{L^1_i}{L^2_i}\subset U_i$ containing
              $\alpha_i$. This implies that there exists
              $\couf{a_{j_i}}{R_i}{\frac{\omega}{r(y)}}\subset
              \disf{a_{j_i}}{\frac{\omega}{r(y)}}\cap U_i$. Let
              $x_i\in [y,x_{c_0,r_0}]$ with
              $r(x_i)=\min(r(x_{c_0,r_0}),\frac{\omega}{R_i})$. Then for all $y'\in
              (y,x_i)$ we have $R_i<
              \frac{\omega}{r(y')}<\frac{\omega}{r(y)}$. Therefore
              $\couf{a_{j_i}}{R_i}{\frac{\omega}{r(y)}}\cap\disf{a_{j_i}}{\frac{\omega}{r(y')}}=\couf{a_{j_i}}{R_i}{\frac{\omega}{r(y')}}$. Then
              we obtain $\Sigma_{y'}\cap U_i\ne \emptyset$ for each
              $y'\in [y,x_i)$.
Hence, for all
              $y'\in\bigcap_{i=1}^m[y,x_i)=[y,x_{i_0})$ we have
              $\Sigma_{y'}\cap U_i\ne \emptyset$.
            \item We assume that $\crk=0$. Let $y\in [x,x_{c_0,r_0}]$ be a point of type
              (4). Since $\Sigma_y=\bigcup_{i=1}^N(\diso{a_i}{\frac{\omega}{r(y)}}\cup\{x_{a_i,\frac{\omega}{r(y)}}\})$
              (cf. Theorem \ref{sec:spectr-diff-equat-2}), using the
              same arguments above we obtain the result.
          \end{itemize}
        \end{proof}

        \begin{rem} Set notations as in the proof of
          Theorem~\ref{sec:topology-kak}. 
          In the case where $y\in [x,x_{c_0,r_0})$ is a point of type
          (2), then the map $\Psi$ is never continuous at $y$. Indeed,
          since $y$ is of type (2), there exists
          $b\in
          (\disf{a_i}{\frac{\omega}{r(y)}}\setminus\diso{a_i}{\frac{\omega}{r(y)}})\cap
        k$. As for each $y'\in (y,x_{c_0,r_0})$ we have
        $\Sigma_{y'}\cap\diso{b}{\frac{\omega}{r(y)}}=\emptyset$,
        $\Sigma_{y'}\not\in (U\cup \diso{b}{\frac{\omega}{r(y)}},\{U_i\}_{i\in I}\cup
        \{\diso{b}{\frac{\omega}{r(y)}}\})$ for all $y'\in (y,x_{c_0,r_0})$.
        \end{rem}

\printbibliography

Tinhinane Amina, AZZOUZ \    \url{tinhinane-amina.azzouz@univ-grenoble-alpes.fr}

Univ. Grenoble Alpes, CNRS, Institut Fourier, F-38000 Grenoble, France

\end{document}